\documentclass[10pt]{article}

\usepackage[margin=1in]{geometry}
\usepackage{amsmath,amssymb,amsthm,mathtools,mathrsfs}
\usepackage{enumitem}
\usepackage{aliascnt}
\usepackage[colorlinks=true,linkcolor=blue,citecolor=blue,urlcolor=blue]{hyperref}
\usepackage[nameinlink,capitalise,noabbrev]{cleveref}
\usepackage{microtype}

\numberwithin{equation}{section}
\setlist[enumerate]{leftmargin=2.2em,itemsep=0.2em,topsep=0.35em}

\newtheorem{theorem}{Theorem}[section]
\newaliascnt{proposition}{theorem}
\newtheorem{proposition}[proposition]{Proposition}
\aliascntresetthe{proposition}
\newaliascnt{lemma}{theorem}
\newtheorem{lemma}[lemma]{Lemma}
\aliascntresetthe{lemma}
\newaliascnt{corollary}{theorem}
\newtheorem{corollary}[corollary]{Corollary}
\aliascntresetthe{corollary}
\theoremstyle{remark}
\newaliascnt{remark}{theorem}
\newtheorem{remark}[remark]{Remark}
\aliascntresetthe{remark}

\crefname{theorem}{Theorem}{Theorems}
\Crefname{theorem}{Theorem}{Theorems}
\crefname{proposition}{Proposition}{Propositions}
\Crefname{proposition}{Proposition}{Propositions}
\crefname{lemma}{Lemma}{Lemmas}
\Crefname{lemma}{Lemma}{Lemmas}
\crefname{corollary}{Corollary}{Corollaries}
\Crefname{corollary}{Corollary}{Corollaries}
\crefname{remark}{Remark}{Remarks}
\Crefname{remark}{Remark}{Remarks}

\newcommand{\R}{\mathbb R}
\newcommand{\N}{\mathbb N}
\newcommand{\Z}{\mathbb Z}
\newcommand{\Q}{\mathbb Q}
\newcommand{\C}{\mathbb C}
\newcommand{\E}{\mathcal E}
\newcommand{\Prob}{\mathbb P}
\newcommand{\Ex}{\mathbb E}
\newcommand{\Borel}{\mathcal B}

\newcommand{\Span}{\operatorname{span}}
\newcommand{\supp}{\operatorname{supp}}
\newcommand{\Spec}{\operatorname{Spec}}
\newcommand{\Disc}{\operatorname{Disc}}
\newcommand{\Tr}{\operatorname{Tr}}
\newcommand{\Mult}{\operatorname{Mult}}
\newcommand{\Dom}{\operatorname{Dom}}
\newcommand{\dist}{\operatorname{dist}}

\hypersetup{
 pdftitle={Spectral Simplicity and Joint Eigenvalue Densities for a Non-Gaussian Brownian Time Change},
 pdfauthor={Chunhao Cai},
 pdfsubject={Spectral response for a non-Gaussian Brownian time change},
 pdfkeywords={non-Gaussian multiplicative chaos, spectral response,
 Brownian time change, simple spectrum, joint eigenvalue density}
}

\title{Spectral Simplicity and Joint Eigenvalue Densities\\
for a Non-Gaussian Brownian Time Change}
\author{Chunhao Cai\\
\small School of Mathematics (Zhuhai), Sun Yat-sen University\\
\small \texttt{caichh9@mail.sysu.edu.cn}}
\date{}

\begin{document}
\maketitle

\begin{abstract}
We study a Brownian time change on the unit square whose speed measure is
constructed from a Dirichlet eigenfunction expansion with independent
non-Gaussian coefficients.  For $0<\gamma<\sqrt2$, the measure is obtained by
a second-moment martingale argument.  Finite coefficient translations induce
coherent exponential tilts of the speed measure, and conditioning on the
complementary coefficients gives positive Lebesgue densities on every
finite-dimensional orbit.  Unitary transport along these orbits gives a
common-domain analytic family and explicit first-order cluster derivatives.
A first-order splitting argument proves almost-sure simplicity, while the local
eigenfunction-square identity and a Vandermonde argument give joint densities
for all finite vectors of ordered positive eigenvalues.
\end{abstract}

\medskip
\noindent\textbf{2020 Mathematics Subject Classification.}
Primary 60J35; Secondary 47A10, 60G57, 31C25.

\smallskip
\noindent\textbf{Keywords and phrases.}
Non-Gaussian multiplicative chaos; spectral response; Brownian time change;
simple spectrum; joint eigenvalue densities.

\begingroup
\small
\tableofcontents
\endgroup

\section{Introduction}

Multiplicative chaos originates in Kahane's construction
\cite{Kahane1985} and is by now especially well developed in the Gaussian
log-correlated setting.  Non-Gaussian analogues have also appeared in several
forms.  Log-infinitely divisible multifractal measures and their star-scale
invariant extensions were developed in \cite{BacryMuzy2003,RhodesSohierVargas2014}.
For log-correlated fields built as sums of independent non-Gaussian fields,
Junnila \cite{Junnila2020} proved convergence, moment estimates, and
analyticity of the associated chaos throughout the subcritical regime; a
basic example is the random Fourier series with independent identically
distributed non-Gaussian coefficients.  More recently, Kim and Kriechbaum
\cite{KimKriechbaum} constructed couplings under which the Gaussian and
non-Gaussian Fourier chaoses are mutually absolutely continuous in part of
the subcritical regime, and Basu Roy Chowdhury and Ganguly
\cite{BasuGanguly} extended this invariance principle to the full subcritical
regime.  A different genuinely non-Gaussian family, built from planar
Brownian local times, is provided by Brownian multiplicative chaos
\cite{Jego2020,Jego2021}.

On the diffusion side, Gaussian Liouville measure gives rise to Liouville
Brownian motion, constructed in \cite{GarbanRhodesVargas2016}; its Dirichlet
form and heat kernel were further studied in
\cite{GarbanRhodesVargas2014,AndresKajino2016}.  Berestycki and Wong
\cite{BerestyckiWong2023} established a Weyl law for its eigenvalues.  The
response calculus of \cite{CaiResponseSubmitted} isolates a pathwise
mechanism for spectral splitting and joint eigenvalue densities along
finite-dimensional exponential tilts; in the Gaussian setting,
finite-dimensional Cameron--Martin coordinates provide the corresponding
slicing law.

The present paper shows that the spectral mechanism does not require
Gaussian coefficients.  For the product field considered below, conditioning
finitely many coefficients replaces Cameron--Martin slicing by conditional
laws with positive Lebesgue densities.  Gaussian comparison is used only to
transfer the trace properties of the limiting speed measure.  To the best of
our knowledge, almost-sure spectral simplicity has not previously been
established for a Brownian time change driven by a genuinely non-Gaussian
multiplicative-chaos speed measure.

Let
\[
 D=(0,1)^2,
 \qquad
 e_{m,\ell}(x_1,x_2)=2\sin(m\pi x_1)\sin(\ell\pi x_2),
 \qquad
 \lambda_{m,\ell}=\pi^2(m^2+\ell^2),
\]
and enumerate these Dirichlet eigenpairs as $(e_k,\lambda_k)_{k\ge1}$,
repeating eigenvalues according to multiplicity.  Let $\mathsf p$ be a
non-Gaussian probability law on $\R$ with an everywhere strictly positive
Borel density $p:\R\to(0,\infty)$ satisfying
\begin{equation}
 \int_{\R}x p(x)\,dx=0,
 \qquad
 \int_{\R}x^2p(x)\,dx=1,
 \qquad
 \int_{\R}e^{tx}p(x)\,dx<\infty\quad(t\in\R).
 \label{eq:coefficient-law}
\end{equation}
On
\[
 \Omega=\R^{\N},
 \qquad
 b=(b_k)_{k\ge1},
 \qquad
 \Prob=\mathsf p^{\otimes\N},
\]
define
\begin{equation}
 S_{n,b}(x)
 :=\sqrt{2\pi}\sum_{\lambda_k\le2^{2n}}
 b_k\lambda_k^{-1/2}e_k(x)
 \label{eq:field}
\end{equation}
and
\begin{equation}
 M_{n,b}(dx)
 :=\frac{e^{\gamma S_{n,b}(x)}}{Z_{n,\gamma}(x)}\,dx,
 \qquad
 Z_{n,\gamma}(x):=\Ex e^{\gamma S_{n,b}(x)}.
 \label{eq:chaos-approx}
\end{equation}
Fix $0<\gamma<\sqrt2$ throughout.
Section~\ref{sec:measure} constructs a canonical almost-sure limit $M_b$ of
\eqref{eq:chaos-approx}.  On a measurable set of full probability it is a finite,
non-zero, atomless Radon measure with full support, is singular with respect
to Lebesgue measure, and is a smooth measure of full quasi-support for the
Dirichlet form
\begin{equation}
 \E(u,v):=\frac1{2\pi}\int_D\nabla u\cdot\nabla v\,dx,
 \qquad u,v\in H_0^1(D).
 \label{eq:energy}
\end{equation}
We then set
\begin{equation*}
 V_b:=\{u\in H_0^1(D):\widetilde u\in L^2(M_b)\},
\end{equation*}
where $\widetilde u$ is a fixed quasi-continuous representative, and denote by
$A_b$ the non-negative self-adjoint operator associated with
$(\E,V_b)$ on $L^2(M_b)$.

\begin{theorem}
\label{thm:main}
For every coefficient law satisfying \eqref{eq:coefficient-law} and every
$0<\gamma<\sqrt2$, almost surely, the operator $A_b$ has compact resolvent
and its positive eigenvalues satisfy
\begin{equation*}
 0<\Lambda_1^b<\Lambda_2^b<\cdots\uparrow\infty.
\end{equation*}
Moreover, for every $N\ge1$ and every
$1\le n_1<\cdots<n_N$,
\begin{equation*}
 (\Lambda_{n_1}^b,\ldots,\Lambda_{n_N}^b)_\#\Prob
 \ll \mathcal L^N.
\end{equation*}
Here $\mathcal L^N$ denotes $N$-dimensional Lebesgue measure.
\end{theorem}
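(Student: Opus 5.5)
The plan follows the architecture announced in the abstract: construct the measure and its trace properties, then run a perturbation argument along finite coefficient translations. The conditional laws of these translations have positive Lebesgue densities, and they replace the Cameron--Martin slicing of the Gaussian case.

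\textbf{Step 1 (operator).} Using the almost-sure properties of $M_b$ from Section~\ref{sec:measure} (finite, atomless, full support, smooth with full quasi-support), the form $(\E,V_b)$ is closed on $L^2(M_b)$. For compactness of the resolvent I would show that $H_0^1(D)\hookrightarrow L^2(M_b)$ is compact. A sufficient bound is $M_b(B(x,r))\le C r^{\alpha}$ for some $\alpha>0$, which Gaussian comparison of moments transfers from the Gaussian chaos. Combined with Sobolev/trace estimates on $D$, this bound makes the embedding compact. Positivity of $\Lambda_1^b$ follows from the Poincar\'e inequality, since $\int u^2\,dM_b\le C\,\E(u,u)$.

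\textbf{Step 2 (tilts and transport).} For a finite set $F$ of indices and $t\in\R^F$, translating $b_k\mapsto b_k+t_k$ for $k\in F$ multiplies $M_b$ by the smooth positive density
\[
 \exp\Big(\gamma\sqrt{2\pi}\sum_{k\in F}t_k\lambda_k^{-1/2}e_k\Big).
\]
Thus $M_{b+t}=e^{\gamma h_t}M_b$ with $h_t$ bounded and smooth. The form domain $V_b$ is unchanged, so the map $u\mapsto e^{-\gamma h_t/2}u$ unitarily transports the problem to a fixed Hilbert space $L^2(M_b)$ with common form domain. This gives a real-analytic family of type (B) in $t$. The Hellmann--Feynman formula then gives the first-order derivative of a cluster of eigenvalues: restricted to the eigenspace $E$ of $\Lambda$, it is the compression of $-\gamma\Lambda\,\partial_{t_k}h$, i.e.\ the matrix
\[
 \Big(-\gamma\sqrt{2\pi}\,\lambda_k^{-1/2}\,\Lambda\int e_k\,\phi_i\phi_j\,dM_b\Big)_{i,j}.
\]
Conditionally on $(b_j)_{j\notin F}$, the law of $(b_k)_{k\in F}$ has a positive Lebesgue density, since $p>0$ everywhere.

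\textbf{Step 3 (simplicity).} Suppose $\Lambda$ has multiplicity $m\ge2$ with orthonormal eigenfunctions $\phi_i$. I would show that the cluster derivatives split the eigenvalue for some $k$. If every compression matrix were scalar, then $\int e_k(\phi_1^2-\phi_2^2)\,dM_b=0$ for all $k$. Because $\{e_k\}$ is dense and the measure $(\phi_1^2-\phi_2^2)M_b$ is finite, this forces $\phi_1^2=\phi_2^2$ $M_b$-a.e. Similarly the off-diagonal entries give $\phi_1\phi_2=0$ a.e. Together these imply $\phi_1=\phi_2=0$ a.e., a contradiction. So along the one-parameter analytic family in $t_k$ the analytic eigenvalue branches separate, and the set of $t$ with a multiple eigenvalue at a given index is a proper analytic subset, hence Lebesgue-null. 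Fubini against the positive conditional density then gives probability zero for each index, and taking a countable union over indices finishes simplicity.

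\textbf{Step 4 (joint densities).} With simple spectrum, $\partial_{t_k}\Lambda_n=-\gamma\sqrt{2\pi}\lambda_k^{-1/2}\Lambda_n\int e_k\phi_n^2\,dM_b$. Choose $F=\{k_1,\dots,k_N\}$. The Jacobian of $t\mapsto(\Lambda_{n_1},\dots,\Lambda_{n_N})$ is, up to nonzero factors, $\det\big(\int e_{k_i}\phi_{n_j}^2\,dM_b\big)$. To show this is nonzero for suitable $F$, I would argue as follows. The measures $\phi_{n_j}^2M_b$ are linearly independent; otherwise a combination $\sum c_j\phi_{n_j}^2$ would vanish, and the local eigenfunction-square identity, a Vandermonde-type argument using distinct $\Lambda_{n_j}$ applied through the operator, rules this out. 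Linear independence plus density of $\{e_k\}$ yields indices with nonsingular matrix. Being analytic in $t$ and not identically zero, the determinant vanishes only on a null set. The coarea/change-of-variables formula plus positive conditional densities then gives absolute continuity, after a countable union over choices of $F$.

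\textbf{Main obstacle.} I expect the hardest step to be the linear independence of the measures $\phi_{n_j}^2M_b$, which must hold almost surely and for singular $M_b$. My first attempt would apply $A_b$ to the identity and use the Leibniz rule for the energy measure: $\mathcal{E}$-energy of $\phi^2$ equals $2\phi^2\Lambda\,dM_b$ plus a gradient term. Iterating this produces Vandermonde systems in the distinct $\Lambda_{n_j}$, from which the coefficients $c_j$ must vanish.
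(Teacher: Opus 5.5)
Your outline follows the paper's architecture: trace form with compact resolvent, unitary transport to a common form domain, the cluster derivative $-\gamma\Lambda\int f\phi_a\phi_c\,dM_b$, positive conditional densities, the square identity with a Vandermonde system, and the inverse function theorem with a countable union. However, Step~3 has a genuine gap.

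\textbf{Non-scalar is not full splitting.} Showing that some compression $C_{e_k}^\Lambda$ is non-scalar only separates \emph{some} branches to first order. If $r=\dim\ker(A_b-\Lambda)\ge3$, that compression may still have a repeated eigenvalue. Nothing in your argument then prevents two branches with equal slope from coinciding identically along the whole $e_k$-line, in which case the collision set on that line is not null. Single basis directions need not suffice. A priori the compressions could look like $\mathrm{diag}(1,0,0)$, $\mathrm{diag}(0,1,0)$, $\mathrm{diag}(0,0,1)$, each non-scalar with a double eigenvalue, while $C_{e_1}^\Lambda+2C_{e_2}^\Lambda$ is simple.

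What you need is a direction whose compression has $r$ \emph{distinct} eigenvalues. The paper obtains it in \cref{lem:finite-dimensional-splitting}. It takes a real combination $C_{\max}$ of the $C_q^\Lambda$ with the maximal number of distinct eigenvalues. If some eigenspace of $C_{\max}$ were multiple, the non-scalar lemma applied to an orthonormal pair inside it would show that $C_{\max}+\varepsilon C_q^\Lambda$ has more distinct eigenvalues, a contradiction. It then rounds the coefficients to rationals so the direction lies in the countable core $Q$ (\cref{prop:single-splitting}). A discriminant-polynomial argument splits the finitely many relevant blocks simultaneously (\cref{cor:simultaneous-splitting}).

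\textbf{The direction depends on $b$.} Even with such a direction, it depends on the point, so Fubini along one fixed line cannot be applied to the collision event as a whole. Moreover, the ordered eigenvalues are only continuous along a line, so ``proper analytic subset'' is not the operative mechanism. The paper instead fixes $q\in Q$ and introduces the measurable event $\mathcal C_{K,q}$: a collision among the first $K$ gaps, with $q$ splitting every block involved to first order. It shows $\mathcal C_K=\bigcup_q\mathcal C_{K,q}$ and proves $\Prob(\mathcal C_{K,q})=0$ on slices along the (generally non-coordinate) direction $\delta(q)$. On such a slice, full first-order splitting makes every point of $\mathcal C_{K,q}$ isolated, so the section is countable and hence null for the absolutely continuous conditional law (\cref{lem:local-isolation,lem:fixed-direction-null}). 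Measurability of these events is obtained through rational spectral windows (\cref{lem:measurable-splitting}).

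\textbf{Step~4.} Singularity of $M_b$ is the tool here rather than the obstacle. With $u_k:=\sum_ic_i\Lambda_i^k\phi_i^2$, from $u_k=0$ you get
\[
0=L_0u_k=2u_{k+1}M_b-2\sum_ic_i\Lambda_i^k\Gamma(\phi_i)\,dx .
\]
Here $L_0$ is the distributional operator; you cannot apply $A_b$ to $\phi^2$. Then $M_b\perp dx$ and uniqueness of the Lebesgue decomposition force $u_{k+1}M_b=0$. Continuity of eigenfunctions and full support give $u_{k+1}=0$ on $D$, which is what lets you iterate.

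Your claim that the determinant ``vanishes only on a null set'' is loosely justified and also unnecessary. Analyticity holds only where the selected eigenvalues are simple, and non-vanishing is known only near the base point. It suffices to show that, for a Lebesgue-null $E\subset\R^N$, each witness event $\{\det\ne0\}\cap\{\boldsymbol\Lambda\in E\}$ is null by the local inverse function theorem on slices, and then take the countable union. Coordinate directions do suffice there.
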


\begin{remark}
The restriction $\gamma<\sqrt2$ comes from the second-moment construction,
since the two-point kernel has order $|x-y|^{-\gamma^2}$.  The spectral
argument depends on the structural properties of the limiting measure and the
conditional orbit densities.  A full-subcritical extension would therefore
reduce to constructing the measure with a positive Frostman exponent and
transferring the trace properties; compare
\cite[Section~3.4]{BasuGanguly}, \cite[Section~3]{KimKriechbaum}, and
\cite[Section~6]{CaiResponseSubmitted}.
\end{remark}

The rest of the article is organized as follows.  Section~2 constructs the
non-Gaussian speed measure, establishes its trace-form and spectral
realization, and derives the finite-dimensional conditional coefficient
slices.  Section~3 develops the pathwise response argument, including
first-order splitting and the square-identity proof of transversality.
Section~4 proves almost-sure simplicity by one-dimensional conditional
slicing, and Section~5 proves joint eigenvalue densities by finite-dimensional
response submersion.

\section{The speed measure and conditional coefficient slices}
\label{sec:measure}

We construct the limiting measure, place its trace properties on a measurable
full-probability event, and identify the associated Green operator.  The
finite-coordinate disintegration at the end of the section supplies the
slices used in the spectral arguments.

\subsection{Spectral covariance and construction of the measure}

Put
\begin{equation*}
 \psi_k(x):=\sqrt{2\pi}\,\lambda_k^{-1/2}e_k(x),
 \qquad
 K_n(x,y):=\sum_{\lambda_k\le2^{2n}}\psi_k(x)\psi_k(y).
\end{equation*}
Thus $S_{n,b}=\sum_{\lambda_k\le2^{2n}}b_k\psi_k$ and $K_n$ is its covariance
kernel; the normalization gives unit logarithmic coefficient.

For $R\ge1$ and $z\in\R^2$, set
\begin{equation}
 T_R(z):=
 \sum_{\substack{k\in\Z^2\setminus\{0\}\\ |k|\le R}}
 \frac{\cos(\pi k\cdot z)}{|k|^2},
 \qquad
 d_2(z):=\operatorname{dist}(z,2\Z^2),
 \label{eq:lattice-green-def}
\end{equation}
and
\begin{equation*}
 L_R(z):=\log^+\!\frac1{d_2(z)\vee R^{-1}}.
\end{equation*}
Here $T_R$ is the sharply truncated periodic lattice Green sum, while $L_R$
records its logarithmic singularity at the cutoff scale $R^{-1}$.

\begin{lemma}
\label{lem:lattice-green}
There is a deterministic $C<\infty$ such that
\begin{equation}
 \bigl|T_R(z)-2\pi L_R(z)\bigr|\le C,
 \qquad R\ge1,\quad z\in\R^2.
 \label{eq:lattice-green-estimate}
\end{equation}
\end{lemma}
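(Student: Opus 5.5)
The plan is to compare the lattice sum $T_R(z)$ with the corresponding radial integral, and to control the discretization error separately in a non-oscillatory and an oscillatory frequency range. Both $T_R$ and $d_2$ are $2\Z^2$-periodic and even in $z$, and $L_R$ depends on $z$ only through $d_2(z)$. So it suffices to treat $z\in[-1,1]^2$, where $d_2(z)=|z|\le\sqrt2$. Put $\rho:=|z|$. The case $z=0$ is included in the regime $\rho\le R^{-1}$ below.

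\emph{Step 1 (continuum model).} Set $f_z(\xi):=\cos(\pi\xi\cdot z)/|\xi|^2$ and $I_R(z):=\int_{1\le|\xi|\le R}f_z(\xi)\,d\xi$. In polar coordinates,
\[
I_R(z)=2\pi\int_1^R J_0(\pi r\rho)\,\frac{dr}{r}=2\pi\int_{\pi\rho}^{\pi\rho R}J_0(s)\,\frac{ds}{s}.
\]
The claim for $I_R$ follows from three facts: $|J_0(s)-1|\lesssim s^2$ for $s\le1$, $\bigl|\int_1^A J_0(s)\,ds/s\bigr|\le C$ uniformly in $A\ge1$, and $|J_0|\le1$.
\begin{itemize}
\item If $\rho R\le1$, these give $I_R=2\pi\log R+O(1)$.
\item If $\rho R>1$, they give $I_R=2\pi\log^+(1/\rho)+O(1)$, using $\rho\le\sqrt2$.
\end{itemize}
In both cases $|I_R-2\pi L_R|\le C$.

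\emph{Step 2 (discretization in the non-oscillatory range).} Write $T_R-I_R$ as a sum over $k$ of $f_z(k)-\int_{Q_k}f_z$, where $Q_k$ is the unit square centred at $k$. The annular boundary mismatch, at $|\xi|\approx1$ and at $|\xi|\approx R$, is handled inside Step 3 or by trivial bounds. For $|k|\le\min(R,1/\rho)$ I use the gradient bound $|\nabla f_z(\xi)|\lesssim\rho|\xi|^{-2}+|\xi|^{-3}$. This gives an error at most
\[
C\Bigl(\rho\log\bigl(\min(R,1/\rho)\bigr)+\sum_k|k|^{-3}\Bigr)\le C,
\]
because $\rho\log(1/\rho)$ is bounded and, when $\rho\le1/R$, $\rho\log R\le(\log R)/R\le 1$. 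The integral part of $I_R$ over $1/\rho<|\xi|\le R$ was already shown in Step 1 to be $O(1)$. So when $\rho R\le1$ the proof is complete.

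\emph{Step 3 (oscillatory range, the main obstacle).} It remains to show $\bigl|\sum_{1/\rho<|k|\le R}e^{i\pi k\cdot z}|k|^{-2}\bigr|\le C$ when $\rho R>1$. Crude bounds lose $\log(\rho R)$ here, so this is where care is needed. I would pick a coordinate $j$ with $|z_j|\ge\rho/\sqrt2$, so that $|1-e^{i\pi z_j}|\gtrsim\rho$ (recall $|z_j|\le1$). On each lattice line parallel to $e_j$, I apply Abel summation in $k_j$ to the truncated segment of the annulus.
\begin{itemize}
\item \emph{Boundary terms at the outer circle.} There are $O(R)$ lines, each contributing $\lesssim R^{-2}\rho^{-1}$, for a total of $O\bigl((\rho R)^{-1}\bigr)=O(1)$.
\item \emph{Boundary terms at the inner circle $|k|\approx1/\rho$.} There are $O(1/\rho)$ lines, each contributing $\lesssim\rho^2\cdot\rho^{-1}$, for a total of $O(1)$.
\item \emph{Variation terms.} These are bounded by $\rho^{-1}\sum_{|k|>1/\rho}|k|^{-3}\lesssim\rho^{-1}\cdot\rho=O(1)$.
\end{itemize}
Each line meets the annulus in at most two segments, so the boundary terms are counted correctly.

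Combining Steps 1–3 gives \eqref{eq:lattice-green-estimate} with a constant independent of $R$ and $z$.
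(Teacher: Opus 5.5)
Your proof is correct, but it takes a different route from the paper. The paper never estimates an oscillatory lattice sum directly. It first replaces the sharp cutoff by the Gaussian damping $e^{-|k|^2/R^2}$, observing that the two sums differ by $O(1)$ even in absolute value: the defect $1-e^{-|k|^2/R^2}\le|k|^2/R^2$ on $|k|\le R$ and the Gaussian tail on $|k|>R$ are summable without any cancellation. It then writes the smoothed sum as $\int_{R^{-2}}^\infty(\vartheta_t(z)-1)\,dt$ and applies Poisson summation. This lets it read off $2\pi L_R(z)$ from the single dominant image term $\frac{\pi}{t}e^{-\pi^2d_2(z)^2/(4t)}$ by an exponential-integral computation.

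You instead compare the sharp sum with the continuum integral and evaluate the latter through $J_0$. You control the non-oscillatory range $|k|\le\min(R,1/\rho)$ by first-order cell errors, where $\rho\log\min(R,1/\rho)$ stays bounded. You then obtain the cancellation in the shell $1/\rho<|k|\le R$ by hand, via Abel summation along lattice lines and the geometric-sum bound coming from $|1-e^{i\pi z_j}|\gtrsim\rho$. The boundary and variation terms are indeed each $O(1)$, as you claim.

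The paper's argument is shorter, and Poisson summation absorbs both the oscillation and the $2\Z^2$ images at once. Yours is more elementary (no theta functions), at the cost of the case split at $|k|\approx1/\rho$ and a genuine oscillatory-sum estimate, which is exactly where a sharp cutoff is delicate.

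One bookkeeping remark: when $\rho R>1$, the outer boundary mismatch in Step 2 sits at $|\xi|\approx1/\rho$, not at $R$. That region has area $O(1/\rho)$ and $|f_z|\lesssim\rho^2$ there, so its contribution is $O(\rho)$ and harmless.
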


\begin{proof}
We first replace the sharp cutoff by a heat-smoothed one.  Set
\begin{equation*}
 \widetilde T_R(z):=
 \sum_{k\in\Z^2\setminus\{0\}}
 \frac{e^{-|k|^2/R^2}\cos(\pi k\cdot z)}{|k|^2}.
\end{equation*}
The two cutoffs differ by a uniformly bounded amount.  Indeed,
using $1-e^{-u}\le u$ and dyadic annuli,
\begin{align}
 |T_R(z)-\widetilde T_R(z)|
 &\le
 \sum_{0<|k|\le R}\frac{1-e^{-|k|^2/R^2}}{|k|^2}
 +\sum_{j\ge0}\sum_{2^jR<|k|\le2^{j+1}R}
   \frac{e^{-|k|^2/R^2}}{|k|^2}\notag\\
 &\le R^{-2}\#\{k\in\Z^2:|k|\le R\}
 +C\sum_{j\ge0}e^{-4^j}
 \le C.
 \label{eq:sharp-smooth-difference}
\end{align}
The smoothed sum has a convenient integral representation.  Namely,
\begin{equation*}
 \frac{e^{-|k|^2/R^2}}{|k|^2}
 =\int_{R^{-2}}^\infty e^{-t|k|^2}\,dt,
\end{equation*}
so Fubini gives
\begin{equation}
 \widetilde T_R(z)
 =\int_{R^{-2}}^\infty\bigl(\vartheta_t(z)-1\bigr)\,dt,
 \qquad
 \vartheta_t(z):=\sum_{k\in\Z^2}e^{-t|k|^2}e^{i\pi k\cdot z}.
 \label{eq:theta-integral}
\end{equation}
Poisson summation yields
\begin{equation}
 \vartheta_t(z)
 =\frac{\pi}{t}\sum_{m\in\Z^2}
 \exp\!\left(-\frac{\pi^2|z-2m|^2}{4t}\right).
 \label{eq:poisson-theta}
\end{equation}
We estimate the integral in \eqref{eq:theta-integral}.  Its part over
$t\ge1$ is uniformly bounded, since
\[
 |\vartheta_t(z)-1|
 \le\sum_{k\ne0}e^{-t|k|^2}\le Ce^{-t}.
\]
Consider next $R^{-2}\le t\le1$.  If $d_2(z)\ge1/2$, then
for $0<t\le1$ every image point stays a fixed positive distance from $z$,
and \eqref{eq:poisson-theta} gives
\begin{equation*}
 |\vartheta_t(z)|\le Ct^{-1}e^{-c/t},
 \qquad
 \int_{R^{-2}}^1|\vartheta_t(z)-1|\,dt\le C.
\end{equation*}
Since $L_R(z)\le\log2$, this gives the required estimate in that case.
Suppose therefore that $d_2(z)<1/2$.  There is then a unique
$m_0\in\Z^2$ with $|z-2m_0|=d_2(z)$, and
\eqref{eq:poisson-theta} becomes, uniformly for $0<t\le1$,
\begin{equation*}
 \vartheta_t(z)
 =\frac{\pi}{t}
 \exp\!\left(-\frac{\pi^2d_2(z)^2}{4t}\right)
 +O\!\left(t^{-1}e^{-c/t}\right).
\end{equation*}
The error integrates to $O(1)$; the integral of the subtracted constant
$-1$ in \eqref{eq:theta-integral} is also $O(1)$.  For the main term put
$c_0=\pi^2/4$.  If $d_2(z)\ge R^{-1}$, the substitution
$u=c_0d_2(z)^2/t$ gives
\begin{align*}
 \pi\int_{R^{-2}}^1
 t^{-1}e^{-c_0d_2(z)^2/t}\,dt
 &=\pi\int_{c_0d_2(z)^2}^{c_0d_2(z)^2R^2}
   \frac{e^{-u}}u\,du\\
 &=2\pi\log\frac1{d_2(z)}+O(1),
\end{align*}
where the error is uniform because the upper limit is at least $c_0$ and
$\int_{c_0}^{\infty}e^{-u}u^{-1}\,du<\infty$.
If $d_2(z)<R^{-1}$, then
\begin{align}
 &\left|
 \pi\int_{R^{-2}}^1t^{-1}e^{-c_0d_2(z)^2/t}\,dt
 -2\pi\log R\right|\notag\\
 &\qquad\le
 \pi\int_{R^{-2}}^1
 \frac{1-e^{-c_0d_2(z)^2/t}}{t}\,dt
 \le\pi\int_0^{c_0}\frac{1-e^{-u}}u\,du
 \le C.
 \label{eq:theta-integral-near}
\end{align}
Equations \eqref{eq:theta-integral}--\eqref{eq:theta-integral-near}
show
\[
 \widetilde T_R(z)=2\pi L_R(z)+O(1)
\]
uniformly in $R$ and $z$.  Together with
\eqref{eq:sharp-smooth-difference}, this proves
\eqref{eq:lattice-green-estimate}.
\end{proof}

The reflection formula for the tensor sine basis now transfers the lattice
estimate to the covariance kernel.

\begin{lemma}
\label{lem:spectral-covariance}
There is a deterministic $C<\infty$ such that, for every $n\ge1$ and
$x,y\in D$,
\begin{equation}
 K_n(x,y)\le C+\log^+\frac1{|x-y|}.
 \label{eq:spectral-covariance-bound}
\end{equation}
Moreover, for every compact $K\Subset D$ there is $C_K<\infty$ such that
\begin{equation}
 \left|K_n(x,y)-\log^+\frac1{|x-y|\vee2^{-n}}\right|\le C_K,
 \qquad x,y\in K,\quad n\ge1.
 \label{eq:spectral-covariance-local}
\end{equation}
\end{lemma}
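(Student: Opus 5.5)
We plan to expand $K_n$ via product-to-sum identities into four lattice sums of the form $T_R$, with $R=2^n/\pi$, and then apply \cref{lem:lattice-green}. For $k=(m,\ell)$ with $m,\ell\ge1$,
\[
 4\sin(m\pi x_1)\sin(m\pi y_1)\sin(\ell\pi x_2)\sin(\ell\pi y_2)
 =\bigl[\cos(\pi m(x_1-y_1))-\cos(\pi m(x_1+y_1))\bigr]\bigl[\cos(\pi\ell(x_2-y_2))-\cos(\pi\ell(x_2+y_2))\bigr].
\]
Expanding the product gives four terms. Write $\sigma_1x=(x_1,-x_2)$, $\sigma_2x=(-x_1,x_2)$ and $\sigma_{12}=-\mathrm{id}$. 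Each term, summed with the even weight $|k|^{-2}$ over all $k=(m,\ell)\in\Z^2$ with $m\ell\ne0$ and divided by $4$, gives $\pm\tfrac14\sum\cos(\pi k\cdot(x-\sigma y))/|k|^2$. Since $\psi_k\psi_k=2\pi\lambda_k^{-1}e_ke_k$ and $\lambda_k=\pi^2|k|^2$, we obtain
\[
 K_n(x,y)=\frac{1}{2\pi}\sum_{\sigma\in\{\mathrm{id},\sigma_1,\sigma_2,\sigma_{12}\}}\epsilon_\sigma\, T^{*}_{R}(x-\sigma y),\qquad \epsilon_{\mathrm{id}}=\epsilon_{\sigma_{12}}=1,\ \epsilon_{\sigma_1}=\epsilon_{\sigma_2}=-1.
\]
Here $T^*_R$ denotes the sum over $0<|k|\le R$ with both coordinates nonzero, and the cutoff condition $\lambda_k\le 2^{2n}$ becomes $|k|\le 2^n/\pi=R$. (Note $R$ may be below $1$ for small $n$, but then the sum is empty or finite, so this only affects constants.)

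Next we remove the axis terms. The difference $T_R-T^*_R$ is the sum over $k=(m,0)$ and $k=(0,\ell)$. In the signed combination, the $k=(m,0)$ contributions from $\sigma$ and $\sigma\sigma_1$ involve the same first coordinate of $x-\sigma y$, and they enter with opposite signs. Concretely, $\mathrm{id}$ and $\sigma_1$ share $x_1-y_1$, while $\sigma_2$ and $\sigma_{12}$ share $x_1+y_1$. So these contributions cancel exactly, and the same holds for the $(0,\ell)$ terms. Hence $K_n=\frac1{2\pi}\sum_\sigma\epsilon_\sigma T_R(x-\sigma y)$ exactly, with the full truncated sums. This cancellation is the step I would check most carefully; it is the only place where the sign bookkeeping could fail.

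Applying \cref{lem:lattice-green} then gives
\[
 K_n(x,y)=\sum_\sigma\epsilon_\sigma L_R(x-\sigma y)+O(1).
\]
For the global bound \eqref{eq:spectral-covariance-bound} we drop the two negative terms, since $L_R\ge0$. For $x,y\in D$, the point $x-\sigma_{12}y=x+y$ lies in $(0,2)^2$, and its distance to $2\Z^2$ is at least comparable to $|x-y|$ up to constants. The reason is that $d_2(x+y)$ is attained at a corner $c\in\{0,2\}^2$, and $|x+y-c|\ge |x-y|\cdot$ const fails in general. So instead we bound $L_R(x+y)\le L_R(x-y)$ plus a constant: for $d_2(z)$ with $z=x-y\in(-1,1)^2$ we have $d_2(x-y)=|x-y|$, and $d_2(x+y)=\min_c|x+y-c|\ge \max(|x_i-y_i|)\ge|x-y|/\sqrt2$, because each coordinate satisfies $|x_i+y_i-c_i|\ge|x_i-y_i|$ for $c_i\in\{0,2\}$ and $x_i,y_i\in(0,1)$. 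Therefore each positive term is at most $\log^+\frac1{|x-y|}+C$. This gives $K_n\le 2\log^+\frac1{|x-y|}+C$, which overshoots the constant $1$ in front of the logarithm.

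This is the main obstacle. I would fix it by not discarding the negative terms. Note that $|x-\sigma_1y|$, $|x-\sigma_2y|$ and $|x+y|$ relate to the distances from $x$ to the reflections of $y$ across the boundary. When $x+y$ is near a corner (with both points near that corner), both $x-\sigma_1y$ and $x-\sigma_2y$ are at least as close to $2\Z^2$ as $x+y$, up to constant factors. More precisely, $d_2(x-\sigma_iy)\le |x-\sigma_i y - c'|\le d_2(x+y)$ for the appropriate $c'$, coordinatewise. So $L_R(x+y)-L_R(x-\sigma_1y)\le C$, which yields the claimed bound with coefficient $1$.

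For \eqref{eq:spectral-covariance-local}: on compact $K\Subset D$, each of the points $x-\sigma_1y$, $x-\sigma_2y$ and $x+y$ stays at distance at least $2\dist(K,\partial D)$ from $2\Z^2$. Their $L_R$ terms are therefore bounded by $C_K$. The remaining term is $L_R(x-y)=\log^+\frac{1}{|x-y|\vee R^{-1}}$, and replacing $R^{-1}=\pi2^{-n}$ by $2^{-n}$ costs $O(1)$.
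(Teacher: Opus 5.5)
Your proposal is correct and is essentially the paper's proof: the same reflection expansion into the signed sums $T_R(x-\sigma y)$, the same appeal to \cref{lem:lattice-green}, and the same coordinatewise inequality $|x_i-y_i|\le\dist(x_i+y_i,2\Z)$, which gives $L_R(x+y)\le L_R(x-\sigma_1y)$ so that the reflected contributions can be discarded. Your exact cancellation of the axis terms is a harmless sharpening of the paper's bound by bounded one-dimensional sums; the detour through dropping both negative terms is unnecessary, and the remark that $d_2(x+y)\gtrsim|x-y|$ ``fails in general'' should be deleted, since you then correctly prove it.
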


\begin{proof}
Write $x=(x_1,x_2)$ and $y=(y_1,y_2)$, and set
\begin{equation*}
 a=x_1-y_1,\qquad b=x_1+y_1,\qquad
 c=x_2-y_2,\qquad d=x_2+y_2.
\end{equation*}
For $R\ge1$ set
\begin{equation*}
 A_R(u,v):=
 \sum_{\substack{m,\ell\ge1\\m^2+\ell^2\le R^2}}
 \frac{\cos(\pi m u)\cos(\pi\ell v)}{m^2+\ell^2}.
\end{equation*}
Separating the full lattice sum into quadrants and then removing the two
coordinate axes in \eqref{eq:lattice-green-def} gives
\begin{equation*}
 4A_R(u,v)
 =T_R(u,v)
 -2\sum_{1\le m\le R}\frac{\cos(\pi m u)}{m^2}
 -2\sum_{1\le\ell\le R}\frac{\cos(\pi\ell v)}{\ell^2}.
\end{equation*}
The one-dimensional sums are uniformly bounded.  Hence
\cref{lem:lattice-green} implies
\begin{equation*}
 A_R(u,v)=\frac\pi2L_R(u,v)+O(1)
\end{equation*}
uniformly in $R,u,v$, where $L_R$ is the function defined above.

The reflection identity for the tensor sine basis now gives the covariance
kernel.  Since $\lambda_{m,\ell}=\pi^2(m^2+\ell^2)$ and
$e_{m,\ell}(x)=2\sin(m\pi x_1)\sin(\ell\pi x_2)$, with
$R_n=2^n/\pi$ and the finitely many cases $R_n<1$ absorbed into the
constant, we obtain
\begin{align}
 \sum_{\lambda_k\le2^{2n}}\psi_k(x)\psi_k(y)
 &=\frac2\pi\Bigl[
 A_{R_n}(a,c)-A_{R_n}(a,d)
 -A_{R_n}(b,c)+A_{R_n}(b,d)\Bigr]\notag\\
 &=L_{R_n}(a,c)-L_{R_n}(a,d)
 -L_{R_n}(b,c)+L_{R_n}(b,d)+O(1).
 \label{eq:image-log-decomposition}
\end{align}
Since $a,c\in(-1,1)$,
\begin{equation*}
 \operatorname{dist}((a,c),2\Z^2)=\sqrt{a^2+c^2}=|x-y|.
\end{equation*}
Moreover, for $x_1,y_1\in(0,1)$,
\begin{equation*}
 |a|\le \min\{b,2-b\}
 =\operatorname{dist}(b,2\Z),
\end{equation*}
and similarly $|c|\le\operatorname{dist}(d,2\Z)$.  Hence
\begin{equation}
 \operatorname{dist}((a,d),2\Z^2)
 \le \operatorname{dist}((b,d),2\Z^2).
 \label{eq:reflected-log-order}
\end{equation}
Since $r\mapsto\log^+(1/(r\vee R_n^{-1}))$ is non-increasing,
\eqref{eq:reflected-log-order} gives
$L_{R_n}(a,d)\ge L_{R_n}(b,d)$.  Because $L_{R_n}(b,c)\ge0$,
\[
 -L_{R_n}(a,d)-L_{R_n}(b,c)+L_{R_n}(b,d)
 =-\bigl(L_{R_n}(a,d)-L_{R_n}(b,d)\bigr)-L_{R_n}(b,c)\le0.
\]
Thus all reflected contributions can be discarded in the upper bound, and
\[
 \sum_{\lambda_k\le2^{2n}}\psi_k(x)\psi_k(y)
 \le L_{R_n}(a,c)+C
 \le \log^+\frac1{|x-y|}+C,
\]
which is \eqref{eq:spectral-covariance-bound}.  If $x,y$ range in a fixed
compact $K\Subset D$, then the three reflected image points in
\eqref{eq:image-log-decomposition} stay a positive distance from
$2\Z^2$.  Their logarithmic terms are therefore uniformly bounded.  Since
$R_n^{-1}=\pi2^{-n}$, changing the cutoff scale from $R_n^{-1}$ to $2^{-n}$
changes the logarithm by at most a constant.  This proves
\eqref{eq:spectral-covariance-local}.
\end{proof}

Let $\mathcal M^+_{\rm fin}(D)$ be the space of finite positive Radon
measures on $D$, equipped with the narrow topology, and fix a complete
compatible metric $d_{\rm w}$.  Since each $M_{n,b}$ depends on finitely many
coefficients, the map $b\mapsto M_{n,b}$ is measurable.  Set
\begin{equation*}
 \Omega_{\rm can}
 :=\{b\in\Omega:(M_{n,b})_{n\ge1}\text{ is }d_{\rm w}\text{-Cauchy}\}.
\end{equation*}
Thus $\Omega_{\rm can}\in\Borel(\Omega)$ is precisely the canonical
convergence event for the approximating measures.  Define on all of $\Omega$
\begin{equation*}
 M_b:=
 \begin{cases}
  \displaystyle\lim_{n\to\infty}M_{n,b},&b\in\Omega_{\rm can},\\[1mm]
  0,&b\notin\Omega_{\rm can},
 \end{cases}
\end{equation*}
where the limit is taken in $d_{\rm w}$.  This fixes a canonical speed-measure
realization for every coefficient environment; the next proposition records
its basic probabilistic properties.

\begin{proposition}
\label{prop:chaos}
As defined above, $\Omega_{\rm can}$ and $M_b$ satisfy
\begin{equation*}
 \Prob(\Omega_{\rm can})=1.
\end{equation*}
The map
\begin{equation*}
 b\longmapsto M_b:
 (\Omega,\Borel(\Omega))
 \longrightarrow
 \bigl(\mathcal M^+_{\rm fin}(D),\Borel(\mathcal M^+_{\rm fin}(D))\bigr)
\end{equation*}
is measurable, and
\begin{equation*}
 M_{n,b}\Longrightarrow M_b,\qquad b\in\Omega_{\rm can}.
\end{equation*}
Moreover, for every $g\in C(\overline D)$,
\begin{equation}
 \int_Dg\,dM_{n,b}\longrightarrow\int_Dg\,dM_b
 \quad\text{almost surely and in }L^2(\Prob),
 \label{eq:test-L2}
\end{equation}
and
\begin{equation}
 \Ex\int_Dg\,dM_b=\int_Dg(x)\,dx.
 \label{eq:first-moment}
\end{equation}
\end{proposition}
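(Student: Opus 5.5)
The plan is to use the martingale structure of the approximations. For fixed $g\in C(\overline D)$ put
\[
X_n(g):=\int_D g\,dM_{n,b}.
\]
Since $S_{n+1,b}-S_{n,b}$ depends only on coefficients independent of $\mathcal F_n:=\sigma(b_k:\lambda_k\le2^{2n})$, and $Z_{n,\gamma}$ is the normalizing mean, the density $e^{\gamma S_{n,b}(x)}/Z_{n,\gamma}(x)$ is a positive $(\mathcal F_n)$-martingale for each $x$. By Fubini, $X_n(g)$ is then a martingale with $\Ex X_n(g)=\int g\,dx$.

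The key quantity is the second moment:
\[
\Ex X_n(g)^2=\int\!\!\int g(x)g(y)\,\frac{\Ex e^{\gamma(S_{n,b}(x)+S_{n,b}(y))}}{Z_{n,\gamma}(x)Z_{n,\gamma}(y)}\,dx\,dy .
\]
With $\kappa(t):=\log\int e^{tx}p(x)\,dx$, which is finite by \eqref{eq:coefficient-law}, independence gives the integrand ratio as
\[
\exp\Bigl(\sum_{\lambda_k\le2^{2n}}\bigl[\kappa(\gamma\psi_k(x)+\gamma\psi_k(y))-\kappa(\gamma\psi_k(x))-\kappa(\gamma\psi_k(y))\bigr]\Bigr).
\]
We need this to be $\le C\,e^{\gamma^2K_n(x,y)}$ up to a constant. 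Since $\kappa(0)=\kappa'(0)=0$ and $\kappa''(0)=1$, Taylor expansion gives
\[
\kappa(s+t)-\kappa(s)-\kappa(t)=st+O\bigl(|st|(|s|+|t|)\bigr)
\]
for $|s|,|t|$ bounded. Because $|\psi_k|\le C\lambda_k^{-1/2}$ is uniformly bounded, each bracket equals $\gamma^2\psi_k(x)\psi_k(y)$ plus an error of size $O(\lambda_k^{-3/2})$. The series $\sum_k\lambda_k^{-3/2}$ converges in two dimensions (it is comparable to $\sum_{m,\ell}(m^2+\ell^2)^{-3/2}$). So the ratio is at most $C e^{\gamma^2K_n(x,y)}$, and \cref{lem:spectral-covariance} gives the bound $C\,|x-y|^{-\gamma^2}\vee C$. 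This is integrable on $D\times D$ exactly because $\gamma^2<2$.

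Hence $\sup_n\Ex X_n(g)^2<\infty$, and $X_n(g)$ converges almost surely and in $L^2(\Prob)$. The $L^2$ convergence preserves the mean, which will give \eqref{eq:first-moment} once the limit is identified with $\int g\,dM_b$.

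The step I expect to be the main obstacle is passing from convergence along each fixed $g$ to narrow convergence of the measures, and hence to $\Prob(\Omega_{\rm can})=1$. I will argue as follows.
\begin{enumerate}
\item Take a countable family $\{g_j\}\subset C(\overline D)$ that is dense in sup norm and contains $g\equiv1$. On a single full-probability event, all the martingales $X_n(g_j)$ converge.
\item On that event the total masses $X_n(1)$ converge, so they are bounded, and density gives convergence of $\int g\,dM_{n,b}$ for every $g\in C(\overline D)$.
\item This convergence defines a positive linear functional on $C(\overline D)$, hence a measure $\mu$ on $\overline D$, and $M_{n,b}\to\mu$ narrowly as measures on $\overline D$.
\end{enumerate}

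To get narrow convergence in $\mathcal M^+_{\rm fin}(D)$ we still need tightness in $D$ itself, i.e.\ no mass escaping to $\partial D$. I would use the first moment for this. For boundary layers $D_\varepsilon:=\{x:\dist(x,\partial D)<\varepsilon\}$, choose continuous cutoffs $\chi_\varepsilon$ that equal $1$ on $D_\varepsilon$ and are supported in $D_{2\varepsilon}$. The martingale limit satisfies $\Ex\lim_n\int\chi_\varepsilon\,dM_{n,b}\le 4\cdot2\varepsilon$ in area. Taking $\varepsilon=2^{-j}$ and using Borel--Cantelli, or monotonicity in $\varepsilon$, shows that $\limsup_n M_{n,b}(D_\varepsilon)\to0$ as $\varepsilon\to0$ almost surely, so $\mu(\partial D)=0$. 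Tightness in $D$ then follows.

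This gives convergence of $M_{n,b}$ in $d_{\rm w}$. A convergent sequence is Cauchy, so $\Prob(\Omega_{\rm can})=1$, and $M_b=\mu$ on $\Omega_{\rm can}$. Measurability of $b\mapsto M_b$ follows because it is a pointwise limit of measurable maps on the Borel set $\Omega_{\rm can}$ and constant off it. Finally, the almost-sure convergence in \eqref{eq:test-L2} is the narrow convergence just proved, and the $L^2$ statement is the martingale $L^2$ limit, which identifies the two limits.
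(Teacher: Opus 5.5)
Your proposal is correct and follows essentially the same route as the paper. The shared steps are the $\mathcal F_n$-martingale $X_n(g)$, the cumulant Taylor bound $\kappa(s+t)-\kappa(s)-\kappa(t)=st+O(|st|(|s|+|t|))$ with $\sum_k\lambda_k^{-3/2}<\infty$ combined with \cref{lem:spectral-covariance} to get the $|x-y|^{-\gamma^2}$ two-point bound, a countable dense family containing $1$ with Riesz representation on $\overline D$, and exclusion of boundary mass via the first moment. The only minor deviation is that you get $L^2$ convergence for arbitrary $g\in C(\overline D)$ directly from $X_n(g)$ being an $L^2$-bounded martingale, which is slightly more economical than the paper's density-plus-Fatou argument.
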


\begin{proof}
Set
\begin{equation*}
 W_{n,b}(x):=\frac{e^{\gamma S_{n,b}(x)}}{Z_{n,\gamma}(x)}.
\end{equation*}
Let $\mathcal F_n$ be the sigma-field generated by the coefficients appearing
in $S_{n,b}$.  Because the newly added coefficients are independent and the
normalization in \eqref{eq:chaos-approx} factorizes coordinate by coordinate,
\begin{equation*}
 \Ex[W_{n+1,b}(x)\mid\mathcal F_n]=W_{n,b}(x)
 \qquad(x\in D).
\end{equation*}
Hence, for every bounded Borel $g$,
\begin{equation*}
 X_n(g):=\int_Dg(x)W_{n,b}(x)\,dx
\end{equation*}
is an $\mathcal F_n$-martingale.

We now establish a uniform second-moment bound.  Introduce the logarithmic
moment generating function of one coefficient,
\begin{equation*}
 \chi(t):=\log\Ex e^{t b_1},\qquad t\in\R.
\end{equation*}
By \eqref{eq:coefficient-law},
\[
 \chi(0)=\chi'(0)=0,\qquad \chi''(0)=1.
\]
Thus the quadratic part of $\chi$ produces the covariance term, while the
higher-order terms contribute a summable remainder.  Indeed, since
$\|\psi_k\|_\infty\le C\lambda_k^{-1/2}$ and
$\sum_k\lambda_k^{-3/2}<\infty$, Taylor's theorem gives, for all sufficiently
large $k$,
\begin{equation*}
 \chi(u+v)-\chi(u)-\chi(v)
 =uv+O\!\left(|uv|(|u|+|v|)\right),
\end{equation*}
uniformly for
\[
 u=\gamma\psi_k(x),\qquad v=\gamma\psi_k(y),
 \qquad x,y\in D.
\]
The error terms are summable in $k$, while the finitely many remaining
coordinates contribute a bounded constant.  Independence therefore yields
\begin{align*}
 \log\Ex[W_{n,b}(x)W_{n,b}(y)]
 &=\sum_{\lambda_k\le2^{2n}}
 \Bigl[
 \chi\bigl(\gamma(\psi_k(x)+\psi_k(y))\bigr)
 -\chi(\gamma\psi_k(x))-\chi(\gamma\psi_k(y))
 \Bigr]\\
 &\le C_\gamma+\gamma^2K_n(x,y).
\end{align*}
By \cref{lem:spectral-covariance},
\begin{equation}
 \Ex[W_{n,b}(x)W_{n,b}(y)]
 \le C_\gamma\left(1+|x-y|^{-\gamma^2}\right).
 \label{eq:two-point}
\end{equation}
Since $\gamma^2<2$,
\begin{equation*}
 I_\gamma:=
 \iint_{D^2}\left(1+|x-y|^{-\gamma^2}\right)\,dx\,dy<\infty.
\end{equation*}
Consequently,
\begin{equation*}
 \sup_n\Ex|X_n(g)|^2
 \le C_\gamma I_\gamma\|g\|_\infty^2.
\end{equation*}

Choose a countable uniformly dense vector lattice
$\mathscr C_0\subset C(\overline D)$ containing $1$.  By martingale
convergence, after intersecting countably many probability-one events,
$X_n(h)$ converges almost surely and in $L^2$ for every
$h\in\mathscr C_0$.  On the same event $X_n(1)$ converges, and hence
\begin{equation*}
 C_{\rm mass}(b):=\sup_{n\ge1}X_n(1)<\infty.
\end{equation*}
For $g\in C(\overline D)$ and $h\in\mathscr C_0$,
\begin{equation}
 |X_n(g)-X_n(h)|
 \le \|g-h\|_\infty X_n(1)
 \le C_{\rm mass}(b)\|g-h\|_\infty.
 \label{eq:pathwise-test-continuity}
\end{equation}
Choose $h_j\in\mathscr C_0$ with $\|h_j-g\|_\infty\to0$.  Since
$X_n(h_j)$ converges for every $j$, \eqref{eq:pathwise-test-continuity}
shows that $(X_n(g))_n$ is Cauchy and therefore converges almost surely.
Denote its limit by $L_b(g)$.  Linearity passes to the limit, and the same
estimate gives
\begin{equation*}
 |L_b(g)|\le C_{\rm mass}(b)\|g\|_\infty,
 \qquad
 L_b(g)\ge0\quad(g\ge0).
\end{equation*}
Thus the Riesz representation theorem gives a finite random measure
$\overline M_b$ on $\overline D$ such that
$L_b(g)=\int_{\overline D}g\,d\overline M_b$.

The second-moment estimate also yields, for all
$g,h\in C(\overline D)$,
\begin{equation*}
 \sup_n\Ex|X_n(g)-X_n(h)|^2
 \le C_\gamma I_\gamma\|g-h\|_\infty^2.
\end{equation*}
By Fatou's lemma the same bound holds for
$L_b(g)-L_b(h)$.  Hence, for $h_j\in\mathscr C_0$ with
$\|h_j-g\|_\infty\to0$,
\begin{equation*}
 \limsup_{n\to\infty}
 \|X_n(g)-L_b(g)\|_{L^2(\Prob)}
 \le 2(C_\gamma I_\gamma)^{1/2}
 \|g-h_j\|_\infty.
\end{equation*}
Letting $j\to\infty$ gives
\begin{equation}
 X_n(g)\longrightarrow\int_{\overline D}g\,d\overline M_b
 \quad\text{almost surely and in }L^2(\Prob),\qquad g\in C(\overline D).
 \label{eq:closed-square-limit}
\end{equation}
Because $\Ex W_{n,b}(x)=1$, passage to expectations gives
\begin{equation}
 \Ex\int_{\overline D}g\,d\overline M_b=\int_Dg(x)\,dx
 \qquad(g\in C(\overline D)).
 \label{eq:closed-square-first-moment}
\end{equation}

To exclude boundary mass, choose a decreasing sequence
$\varepsilon_j\downarrow0$ and functions
$\rho_j\in C(\overline D)$ satisfying
\[
 0\le\rho_j\le1,\qquad
 \rho_j=1\ \text{on }\partial D,\qquad
 \supp\rho_j\subset
 \{x:\dist(x,\partial D)<\varepsilon_j\},
 \qquad \rho_j\downarrow\mathbf1_{\partial D}.
\]
Then \eqref{eq:closed-square-first-moment} implies
\[
 \Ex\int_{\overline D}\rho_j\,d\overline M_b
 \le |\{x\in D:\dist(x,\partial D)<\varepsilon_j\}|.
\]
Letting $j\to\infty$, using continuity from above pathwise and Fatou's
lemma, gives
\begin{equation}
 \overline M_b(\partial D)=0\qquad\text{almost surely}.
 \label{eq:no-boundary-mass}
\end{equation}
For the closed strips
\[
 F_\varepsilon:=\{x\in\overline D:\dist(x,\partial D)\le\varepsilon\},
\]
Portmanteau and continuity from above give
\begin{equation}
 \lim_{\varepsilon\downarrow0}\limsup_{n\to\infty}
 M_{n,b}(F_\varepsilon\cap D)=0
 \qquad\text{almost surely}.
 \label{eq:boundary-tightness}
\end{equation}
If $g\in C_b(D)$, multiply $g$ by a continuous cutoff
$\eta_\varepsilon$ which vanishes in the $\varepsilon$-boundary strip and is
one outside the $2\varepsilon$-strip.  The product
$g\eta_\varepsilon$, extended by zero to $\partial D$, belongs to
$C(\overline D)$.  Using \eqref{eq:closed-square-limit} first and then
\eqref{eq:no-boundary-mass}--\eqref{eq:boundary-tightness} shows
\[
 \int_Dg\,dM_{n,b}\longrightarrow
 \int_Dg\,d(\overline M_b|_D).
\]
This proves narrow convergence on $D$.  In particular,
$\Prob(\Omega_{\rm can})=1$, and uniqueness of the narrow limit identifies
$\overline M_b|_D$ with the $M_b$ defined above.  The pointwise limit on $\Omega_{\rm can}$, extended by zero on its
complement, is measurable.  Equation \eqref{eq:test-L2} follows from
\eqref{eq:closed-square-limit} and the boundary argument above, and
\eqref{eq:first-moment} follows by taking expectations.
\end{proof}

The same two-point estimate yields uniform small-ball control.  For $m\ge0$
let $\mathcal D_m$ be the finite family of dyadic squares of side $2^{-m}$
that meet $D$.  For
\begin{equation}
 0<\alpha<1-\frac{\gamma^2}{2},
 \label{eq:alpha-range}
\end{equation}
set
\begin{equation*}
 C_{{\rm Fr},\alpha}(b)
 :=\sup_{m\ge0}2^{m\alpha}
   \max_{R\in\mathcal D_m}M_b(R\cap D),
 \qquad
 \Omega_{\rm Fr}(\alpha)
 :=\Omega_{\rm can}\cap\{C_{{\rm Fr},\alpha}<\infty\}.
\end{equation*}
This quantity records the uniform dyadic mass bound from which the Frostman
estimate follows.  Since $b\mapsto M_b$ is measurable and evaluation of a
finite Radon measure on a fixed Borel set is measurable,
$C_{{\rm Fr},\alpha}$ is a measurable countable supremum of finite maxima,
and hence $\Omega_{\rm Fr}(\alpha)\in\Borel(\Omega)$.

\begin{lemma}
\label{lem:frostman}
As defined above, $\Omega_{\rm Fr}(\alpha)$ satisfies
\begin{equation*}
 \Prob(\Omega_{\rm Fr}(\alpha))=1.
\end{equation*}
Moreover, for every $b\in\Omega_{\rm Fr}(\alpha)$,
\begin{equation}
 M_b(B(x,r)\cap D)
 \le C_\alpha C_{{\rm Fr},\alpha}(b)r^\alpha,
 \qquad x\in D,\quad0<r\le1,
 \label{eq:frostman}
\end{equation}
where $C_\alpha<\infty$ is deterministic.
\end{lemma}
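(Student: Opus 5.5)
The plan is to control second moments of the dyadic masses and then run a Borel--Cantelli argument. The deterministic geometric part, \eqref{eq:frostman}, comes afterwards.

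\textbf{Second moments.} Fix $m\ge0$ and $R\in\mathcal D_m$. The first step is to bound $\Ex[M_b(R\cap D)^2]$. Take $g\in C(\overline D)$ with $0\le g\le1$, $g=1$ on $\overline{R}\cap\overline D$, supported in the concentric square $\widehat R$ of side $3\cdot2^{-m}$. Then $M_b(R\cap D)\le\int g\,dM_b$. By the $L^2$ convergence \eqref{eq:test-L2} and the two-point bound \eqref{eq:two-point},
\[
 \Ex\Bigl[\Bigl(\int g\,dM_b\Bigr)^2\Bigr]
 =\lim_n\Ex[X_n(g)^2]
 \le C_\gamma\iint_{(\widehat R\cap D)^2}\bigl(1+|x-y|^{-\gamma^2}\bigr)\,dx\,dy .
\]
Integrating $|x-y|^{-\gamma^2}$ over a square of side $s=3\cdot2^{-m}$ gives
\[
 \iint\le C\,s^2\int_0^{Cs}r^{1-\gamma^2}\,dr\le C\,2^{-m(4-\gamma^2)},
\]
using $\gamma^2<2$. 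Hence $\Ex[M_b(R\cap D)^2]\le C\,2^{-m(4-\gamma^2)}$ uniformly in $R\in\mathcal D_m$.

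\textbf{Borel--Cantelli.} By Chebyshev and a union bound over the at most $C4^m$ squares in $\mathcal D_m$,
\[
 \Prob\Bigl(\max_{R\in\mathcal D_m}M_b(R\cap D)>2^{-m\alpha}\Bigr)
 \le C\,4^m\,2^{2m\alpha}\,2^{-m(4-\gamma^2)}
 =C\,2^{-m(2-\gamma^2-2\alpha)}.
\]
This is summable exactly when $\alpha<1-\gamma^2/2$, which is the condition \eqref{eq:alpha-range}; this explains the threshold. By Borel--Cantelli, almost surely $2^{m\alpha}\max_{R\in\mathcal D_m}M_b(R\cap D)\le1$ for all large $m$. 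For the finitely many remaining $m$ the maxima are finite, since $M_b$ is a finite measure on $\Omega_{\rm can}$. So $C_{{\rm Fr},\alpha}<\infty$ almost surely, and $\Prob(\Omega_{\rm Fr}(\alpha))=1$ by \cref{prop:chaos}.

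\textbf{Geometric covering.} For $x\in D$ and $0<r\le1$, choose $m\ge0$ with $2^{-m-1}<r\le2^{-m}$. The ball $B(x,r)$ meets at most $9$ dyadic squares of side $2^{-m}$, each in $\mathcal D_m$ if it meets $D$. Therefore
\[
 M_b(B(x,r)\cap D)\le9\,C_{{\rm Fr},\alpha}(b)\,2^{-m\alpha}\le9\cdot2^\alpha\,C_{{\rm Fr},\alpha}(b)\,r^\alpha,
\]
which gives \eqref{eq:frostman} with $C_\alpha=9\cdot2^\alpha$.

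\textbf{Main obstacle.} The only delicate point is passing the second-moment bound from $M_{n,b}$ to $M_b$ on a square with possibly charged boundary. Dominating the indicator by a continuous $g$, as above, avoids any need for Portmanteau. The remaining steps are routine.
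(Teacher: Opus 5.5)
Your proposal is correct and follows the paper's proof essentially step for step. You dominate $\mathbf 1_{R\cap D}$ by a continuous cutoff supported in the concentric square of side $3\cdot2^{-m}$, and \eqref{eq:test-L2} with \eqref{eq:two-point} then gives the $2^{-m(4-\gamma^2)}$ second-moment bound; Chebyshev, a union bound over $\mathcal D_m$ and Borel--Cantelli give $C_{{\rm Fr},\alpha}<\infty$ almost surely, and covering $B(x,r)$ by boundedly many squares of $\mathcal D_m$ yields \eqref{eq:frostman} with $C_\alpha=N_0 2^\alpha$, where your $N_0=9$.
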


\begin{proof}
Fix $m\ge0$ and $R\in\mathcal D_m$, and choose
$\eta_R\in C(\overline D)$ such that
\[
 0\le\eta_R\le1,\qquad
 \eta_R=1\ \text{on }R\cap D,
\]
with support contained in the concentric square of side $3\ell(R)$.
By \eqref{eq:test-L2} and \eqref{eq:two-point},
\begin{align*}
 \Ex[M_b(R\cap D)^2]
 &\le \Ex\left(\int_D\eta_R\,dM_b\right)^2\\
 &\le C_\gamma
 \iint_{\supp\eta_R\times\supp\eta_R}
 \left(1+|x-y|^{-\gamma^2}\right)\,dx\,dy\\
 &\le C_\gamma \ell(R)^{4-\gamma^2}.
\end{align*}
Since $\#\mathcal D_m\le C2^{2m}$, Markov's inequality gives
\begin{align*}
 &\Prob\left\{
 \max_{R\in\mathcal D_m}M_b(R\cap D)>2^{-m\alpha}\right\}\\
 &\qquad\le
 C2^{2m}\,2^{2m\alpha}\,2^{-m(4-\gamma^2)}
 =C2^{-m(2-\gamma^2-2\alpha)}.
\end{align*}
The exponent is positive by \eqref{eq:alpha-range}, so Borel--Cantelli yields,
for almost every $b$, an $m_0(b)$ such that
\begin{equation}
 M_b(R\cap D)\le2^{-m\alpha},
 \qquad m\ge m_0(b),\quad R\in\mathcal D_m.
 \label{eq:dyadic-frostman}
\end{equation}
Because $M_b(D)<\infty$, the finitely many levels $m<m_0(b)$ contribute a
finite amount to $C_{{\rm Fr},\alpha}(b)$.  Hence
$C_{{\rm Fr},\alpha}(b)<\infty$ almost surely, and therefore
\[
 \Prob(\Omega_{\rm Fr}(\alpha))=1.
\]
If $2^{-m-1}<r\le2^{-m}$, the ball $B(x,r)$ meets at most a deterministic
number $N_0$ of squares in $\mathcal D_m$.  Consequently,
\begin{align*}
 M_b(B(x,r)\cap D)
 &\le N_0\max_{R\in\mathcal D_m}M_b(R\cap D)\\
 &\le N_0 C_{{\rm Fr},\alpha}(b)2^{-m\alpha}
 \le N_0 2^\alpha C_{{\rm Fr},\alpha}(b)r^\alpha,
\end{align*}
which proves \eqref{eq:frostman} with $C_\alpha=N_0 2^\alpha$.
\end{proof}

\subsection{Finite coefficient orbits and full support}
\label{subsec:finite-coefficient-orbits}

Define
\begin{equation}
 \mathcal T:=\Span_\R\{e_k:k\ge1\},
 \qquad
 Q:=\Span_\Q\{e_k:k\ge1\}.
 \label{eq:finite-mode-cores}
\end{equation}
If $f=\sum_{k\in I}c_ke_k\in\mathcal T$, where $I$ is finite, set
\begin{equation*}
 \delta(f)_k:=
 \begin{cases}
 (2\pi)^{-1/2}\lambda_k^{1/2}c_k,&k\in I,\\
 0,&k\notin I.
 \end{cases}
\end{equation*}
The map $\delta$ identifies a finite coefficient translation with an
additive shift of the truncated field.

\begin{corollary}
\label{cor:canonical-orbit}
If $f\in\mathcal T$, $t\in\R$, and
$b,b+t\delta(f)\in\Omega_{\rm can}$, then
\begin{equation}
 M_{b+t\delta(f)}=e^{\gamma tf}M_b.
 \label{eq:canonical-orbit}
\end{equation}
\end{corollary}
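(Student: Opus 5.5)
The plan is to compare the approximating measures of the two environments level by level, and then pass to the limit using the canonical definition of $M_b$ on $\Omega_{\rm can}$. Write $f=\sum_{k\in I}c_ke_k$ with $I$ finite, and put $b'=b+t\delta(f)$. Since $\lambda_k\to\infty$, there is $n_0$ such that $\lambda_k\le 2^{2n}$ for all $k\in I$ and all $n\ge n_0$. For such $n$, definition \eqref{eq:field} gives
\[
 S_{n,b'}(x)-S_{n,b}(x)
 =\sqrt{2\pi}\sum_{k\in I}t(2\pi)^{-1/2}\lambda_k^{1/2}c_k\lambda_k^{-1/2}e_k(x)
 =tf(x).
\]
The normalization $Z_{n,\gamma}(x)$ is a deterministic function of $x$ and does not depend on $b$. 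It follows that
\[
 M_{n,b'}=e^{\gamma tf}M_{n,b}\qquad(n\ge n_0)
\]
as measures on $D$.

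Next we pass to the limit. The function $e^{\gamma tf}$ is a finite combination of smooth functions on $\overline D$, so it is continuous on $\overline D$ and bounded above and below by positive constants. Take $b,b'\in\Omega_{\rm can}$ and any $g\in C_b(D)$. Then $g e^{\gamma tf}\in C_b(D)$, and narrow convergence of $M_{n,b}\Rightarrow M_b$ (\cref{prop:chaos}) gives
\[
 \int_D g\,dM_{n,b'}=\int_D g e^{\gamma tf}\,dM_{n,b}\longrightarrow\int_D g\,d\bigl(e^{\gamma tf}M_b\bigr).
\]
The left-hand side also converges to $\int_D g\,dM_{b'}$, because $b'\in\Omega_{\rm can}$ and $M_{b'}$ is defined as the $d_{\rm w}$-limit, which metrizes narrow convergence. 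A finite measure is determined by its integrals against $C_b(D)$, so uniqueness of narrow limits yields \eqref{eq:canonical-orbit}.

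There is essentially no obstacle. The only points to check are the identity $\delta$-shift $=$ $tf$ shift once $n$ is large enough to contain all modes in $I$, and that multiplication by a bounded continuous positive density preserves narrow convergence. Both are immediate. The hypothesis that both environments lie in $\Omega_{\rm can}$ is needed only because $M$ is set to $0$ off that event.
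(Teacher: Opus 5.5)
Your proposal is correct and follows essentially the same route as the paper: once every mode of $f$ is active you get $S_{n,b+t\delta(f)}=S_{n,b}+tf$, hence $M_{n,b+t\delta(f)}=e^{\gamma tf}M_{n,b}$ for large $n$. You then pass to the narrow limit by testing against $g\in C_b(D)$, using $ge^{\gamma tf}\in C_b(D)$. As a minor aside, the same argument shows that $b\in\Omega_{\rm can}$ already forces $b+t\delta(f)\in\Omega_{\rm can}$, so that hypothesis is redundant rather than needed.
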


\begin{proof}
For all sufficiently large $n$, every mode occurring in $f$ is present in
\eqref{eq:field}, and hence
\begin{equation*}
 S_{n,b+t\delta(f)}=S_{n,b}+tf,
 \qquad
 M_{n,b+t\delta(f)}=e^{\gamma tf}M_{n,b}.
\end{equation*}
Thus, for $g\in C_b(D)$,
\begin{align*}
 \int_D g\,dM_{b+t\delta(f)}
 &=\lim_{n\to\infty}\int_D g\,dM_{n,b+t\delta(f)}\\
 &=\lim_{n\to\infty}\int_D ge^{\gamma tf}\,dM_{n,b}
 =\int_D ge^{\gamma tf}\,dM_b,
\end{align*}
which proves \eqref{eq:canonical-orbit}.
\end{proof}

The orbit identity also prevents the limiting measure from vanishing on an
open set.

\begin{lemma}
\label{lem:full-support}
Almost surely,
\begin{equation}
 M_b(O)>0
 \qquad\text{for every non-empty open }O\subset D.
 \label{eq:full-support}
\end{equation}
Equivalently, $\supp M_b=D$.
\end{lemma}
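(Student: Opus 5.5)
Since the events $\{M_b(O)>0\}$ for open $O$ are determined by a countable base, it suffices to fix a countable base $\mathcal O_0$ of open balls $B$ with rational centres and radii, $\overline B\subset D$, and to prove $\Prob(M_b(B)=0)=0$ for each such $B$. Every non-empty open $O\subset D$ contains some $B\in\mathcal O_0$, so the full-support statement follows by intersecting countably many full-probability events. Fix $B$ and let $E_B:=\Omega_{\rm can}\cap\{M_b(B)=0\}$, a Borel set because $b\mapsto M_b$ is measurable (\cref{prop:chaos}).

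The plan has two steps. The first is a zero--one argument showing that $E_B$ is a tail event, or at least invariant under finite coefficient changes. By \cref{cor:canonical-orbit}, if $b$ and $b'=b+t\delta(f)$ both lie in $\Omega_{\rm can}$, then $M_{b'}=e^{\gamma tf}M_b$. Since $e^{\gamma tf}$ is strictly positive and bounded, $M_{b'}(B)=0$ if and only if $M_b(B)=0$. Changing finitely many coordinates $b_k$ arbitrarily is exactly a translation by $t\delta(f)$ with $f$ a finite combination of the $e_k$. Similarly $\Omega_{\rm can}$ itself is invariant under such changes, because $M_{n,b'}=e^{\gamma tf}M_{n,b}$ for large $n$ with a bounded continuous positive factor, which preserves the Cauchy property in the narrow topology. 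Hence $E_B$ is invariant under modification of any finite set of coordinates. Since $\Prob$ is an i.i.d.\ product measure, the Hewitt--Savage/Kolmogorov zero--one law gives $\Prob(E_B)\in\{0,1\}$. More precisely, $E_B$ belongs to the tail $\sigma$-field up to null sets: for each $K$ it equals $\{b:\ (0,\dots,0,b_{K+1},\dots)\in E_B\}$, which is $\sigma(b_k:k>K)$-measurable.

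The second step excludes $\Prob(E_B)=1$. Take $g\in C(\overline D)$ with $0\le g\le\mathbf 1_B$ and $\int g\,dx>0$. By \eqref{eq:first-moment}, $\Ex\int g\,dM_b=\int g\,dx>0$. On $E_B$ we have $\int g\,dM_b\le M_b(B)=0$, so $E_B$ cannot have full probability. Therefore $\Prob(E_B)=0$.

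The main (mild) obstacle is the measurability bookkeeping in the zero--one step. One must check that finite-coordinate invariance of $\Omega_{\rm can}$ holds exactly, not just almost surely, so that \cref{cor:canonical-orbit} applies to both points of an orbit. This should follow directly from the identity $M_{n,b+t\delta(f)}=e^{\gamma tf}M_{n,b}$ for all large $n$, together with continuity of multiplication by a fixed bounded continuous positive function on $\mathcal M^+_{\rm fin}(D)$. The equivalence with $\supp M_b=D$ is immediate from the definition of support.
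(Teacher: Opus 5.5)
Your proof is correct. It reaches the zero--one law by a cleaner route than the paper, while the second step (ruling out $\Prob(E_B)=1$ via \eqref{eq:first-moment}) and the countable-base reduction are the same.

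\textbf{The paper's route.} It works with $E_g=\{X_g=0\}$ and proves only \emph{almost-sure} invariance in the first $d$ coordinates. It uses Fubini to obtain full-measure sections $C_y$ of $\Omega_{\rm can}$ for almost every tail $y$, and applies \cref{cor:canonical-orbit} on $C_y$. This shows that $\mathbf 1_{E_g}$ has a $\sigma(b_k:k>d)$-measurable version. A reverse martingale argument is then needed to place $E_g$ in the \emph{completed} tail $\sigma$-field before Kolmogorov's law applies.

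\textbf{Your route.} You observe that $\Omega_{\rm can}$ is \emph{exactly} invariant under finite coordinate changes. Hence $E_B=\Omega_{\rm can}\cap\{M_b(B)=0\}$ equals its pullback under $b\mapsto(0,\dots,0,b_{K+1},\dots)$ and lies in $\bigcap_K\sigma(b_k:k>K)$ itself. So no completion, Fubini, or reverse martingale is required, and your hedge ``up to null sets'' is unnecessary.

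\textbf{A step to make explicit.} You should say why the Cauchy property transfers. Continuity of $\mu\mapsto e^{\gamma tf}\mu$ alone does not preserve Cauchy sequences. What makes it work is that $d_{\rm w}$ is complete and compatible, so $d_{\rm w}$-Cauchy is equivalent to narrow convergence in $\mathcal M^+_{\rm fin}(D)$. Narrow convergence is preserved by multiplication with the bounded continuous function $e^{\gamma tf}$, and the converse direction follows using $e^{-\gamma tf}$.

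\textbf{Trade-off.} Your argument buys exactness and brevity. The paper's buys robustness: it uses the orbit identity only on a full-measure set. It would therefore still work for a merely almost-surely defined version of $M_b$, where exact invariance of the convergence event is not available.
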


\begin{proof}
Fix $0\le g\in C_c(D)$, $g\not\equiv0$, and set
\[
 X_g(b):=\int_D g\,dM_b,
 \qquad
 E_g:=\{X_g=0\}.
\]
By \cref{prop:chaos}, $X_g$ is measurable, and \eqref{eq:first-moment} gives
\begin{equation}
 \Ex X_g=\int_Dg(x)\,dx>0.
 \label{eq:positive-test-mean}
\end{equation}

We show that $E_g$ belongs to the completion of the coefficient tail
$\sigma$-field.  Fix $d\ge1$ and write
$b=(u,y)\in\R^d\times\R^{\N\setminus\{1,\ldots,d\}}$.  Since
$\Prob(\Omega_{\rm can})=1$, Fubini gives, for almost every $y$,
\[
 C_y:=\{u:(u,y)\in\Omega_{\rm can}\},
 \qquad
 \mathsf p^{\otimes d}(C_y)=1.
\]
For $u,v\in C_y$, put
\begin{equation*}
 f_{u-v}:=\sqrt{2\pi}\sum_{k=1}^d
 (u_k-v_k)\lambda_k^{-1/2}e_k\in\mathcal T.
\end{equation*}
Then $\delta(f_{u-v})=u-v$, and \cref{cor:canonical-orbit} gives
\[
 M_{(u,y)}=e^{\gamma f_{u-v}}M_{(v,y)}.
\]
Since $g\ge0$ and $e^{\gamma f_{u-v}}>0$,
\begin{equation*}
 X_g(u,y)=0\quad\Longleftrightarrow\quad X_g(v,y)=0.
\end{equation*}
Thus $u\mapsto\mathbf1_{E_g}(u,y)$ is
$\mathsf p^{\otimes d}$-almost surely constant for almost every $y$.  Hence
$\mathbf1_{E_g}$ admits an $\mathcal F_d^{\rm tail}$-measurable version, where
\[
 \mathcal F_d^{\rm tail}:=\sigma(b_k:k>d),
 \qquad
 \mathcal F_\infty^{\rm tail}:=\bigcap_{d\ge1}\mathcal F_d^{\rm tail}.
\]
Therefore
\[
 \Ex[\mathbf1_{E_g}\mid\mathcal F_d^{\rm tail}]=\mathbf1_{E_g}
 \qquad\text{almost surely for every }d.
\]
The reverse martingale theorem yields
\[
 \mathbf1_{E_g}=\Ex[\mathbf1_{E_g}\mid\mathcal F_\infty^{\rm tail}]
 \qquad\text{almost surely},
\]
so $E_g$ is measurable with respect to the completed tail $\sigma$-field.
Kolmogorov's zero--one law therefore gives
\[
 \Prob(E_g)\in\{0,1\}.
\]
Equation \eqref{eq:positive-test-mean} rules out $\Prob(E_g)=1$, and hence
\[
 \Prob(X_g>0)=1.
\]

Choose a countable family $(g_j)_{j\ge1}\subset C_c(D)$ of non-negative,
non-zero functions such that every non-empty open subset of $D$ contains the
support of some $g_j$.  On
\[
 \bigcap_{j\ge1}\{X_{g_j}>0\},
\]
which has probability one, every non-empty open set has positive $M_b$-mass.
This proves \eqref{eq:full-support}.
\end{proof}

\subsection{Trace form and spectral realization}

This subsection fixes the measure class used in the pathwise spectral argument.
We first obtain a full-probability structural core by comparison with Gaussian
chaos, then close it under finite-mode tilts and realize the resulting trace
operators through the Dirichlet Green kernel.

Let
\begin{equation*}
 L_0:=-(2\pi)^{-1}\Delta_D,
 \qquad
 G_D(x,y):=\int_0^\infty q_t^D(x,y)\,dt,
 \qquad
 -\Delta_xG_D(x,y)=2\pi\delta_y,
\end{equation*}
where $q_t^D$ is the killed heat kernel of $L_0$.  The properties not obtained
directly from the product coefficients will be transferred from a
covariance-matched Gaussian chaos.  Let
\begin{equation*}
 \Omega^{\rm G}:=\R^{\N},\qquad
 g=(g_k)_{k\ge1},\qquad
 \Prob^{\rm G}:=\mathcal N(0,1)^{\otimes\N},
\end{equation*}
and set
\begin{equation*}
 S_{n,g}^{\rm G}(x):=\sum_{\lambda_k\le2^{2n}}g_k\psi_k(x),
 \qquad
 M_{n,g}^{\rm G}(dx):=
 \frac{e^{\gamma S_{n,g}^{\rm G}(x)}}
 {\Ex^{\rm G} e^{\gamma S_{n,g}^{\rm G}(x)}}\,dx.
\end{equation*}
Using the metric $d_{\rm w}$ fixed above, define
\begin{equation*}
 \Omega_{\rm can}^{\rm G}
 :=\{g\in\Omega^{\rm G}:(M_{n,g}^{\rm G})_{n\ge1}
 \text{ is }d_{\rm w}\text{-Cauchy}\}
\end{equation*}
and, on all of $\Omega^{\rm G}$,
\begin{equation*}
 M_g^{\rm G}:=
 \begin{cases}
  \displaystyle\lim_{n\to\infty}M_{n,g}^{\rm G},
  &g\in\Omega_{\rm can}^{\rm G},\\[1mm]
  0,&g\notin\Omega_{\rm can}^{\rm G}.
 \end{cases}
\end{equation*}
This fixes the Gaussian reference on its own coefficient space.

\begin{lemma}
\label{lem:gaussian-reference}
As defined above, $\Omega_{\rm can}^{\rm G}$ and $M_g^{\rm G}$ satisfy
\begin{equation*}
 \Prob^{\rm G}(\Omega_{\rm can}^{\rm G})=1.
\end{equation*}
The map
\begin{equation*}
 g\longmapsto M_g^{\rm G}:
 (\Omega^{\rm G},\Borel(\Omega^{\rm G}))
 \longrightarrow
 \bigl(\mathcal M^+_{\rm fin}(D),\Borel(\mathcal M^+_{\rm fin}(D))\bigr)
\end{equation*}
is measurable, and
\begin{equation*}
 M_{n,g}^{\rm G}\Longrightarrow M_g^{\rm G},
 \qquad g\in\Omega_{\rm can}^{\rm G}.
\end{equation*}
Moreover, for every $h\in C(\overline D)$,
\begin{equation*}
 \int_D h\,dM_{n,g}^{\rm G}
 \longrightarrow
 \int_D h\,dM_g^{\rm G}
 \qquad\text{$\Prob^{\rm G}$-almost surely and in }L^2(\Prob^{\rm G}).
\end{equation*}
\end{lemma}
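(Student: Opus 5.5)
The plan is to observe that the Gaussian law $\mathcal N(0,1)$ is a particular coefficient law satisfying every hypothesis used in the proof of \cref{prop:chaos}. It is centered, has unit variance, and has finite exponential moments of all orders. Non-Gaussianity and positivity of the density play no role in that proof, which uses only three ingredients: the martingale property of $W_{n}$, the cumulant expansion of $\chi$, and the covariance bound of \cref{lem:spectral-covariance}. Accordingly, I will rerun the proof of \cref{prop:chaos} verbatim with $\mathsf p$ replaced by $\mathcal N(0,1)$ and $(\Omega,\Prob)$ replaced by $(\Omega^{\rm G},\Prob^{\rm G})$. The canonical event $\Omega^{\rm G}_{\rm can}$ and the map $g\mapsto M_g^{\rm G}$ are defined through the same metric $d_{\rm w}$ and the same Cauchy criterion, so the conclusions transfer literally.

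The steps are as follows. First, $\mathcal F_n$-measurability and independence of the new coordinates give $\Ex^{\rm G}[W^{\rm G}_{n+1}(x)\mid\mathcal F_n]=W^{\rm G}_n(x)$, so each $X_n^{\rm G}(h)$ is a martingale. Second, in the Gaussian case $\chi(t)=t^2/2$ exactly, so the two-point computation is an identity:
\[
 \Ex^{\rm G}[W^{\rm G}_n(x)W^{\rm G}_n(y)]=e^{\gamma^2K_n(x,y)}.
\]
\Cref{lem:spectral-covariance} then yields \eqref{eq:two-point} with no Taylor remainder at all, and $\gamma<\sqrt2$ gives the uniform $L^2$ bound. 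Third, I apply martingale convergence on a countable dense lattice $\mathscr C_0$. The pathwise continuity estimate \eqref{eq:pathwise-test-continuity} and the Riesz representation then produce $\overline M^{\rm G}_g$ on $\overline D$, and Fatou's lemma gives $L^2$ convergence for every $h\in C(\overline D)$. Fourth, the boundary-strip argument, which rests on the first-moment identity coming from $\Ex^{\rm G}W_n^{\rm G}=1$, excludes mass on $\partial D$ and yields narrow convergence on $D$. That gives $\Prob^{\rm G}(\Omega^{\rm G}_{\rm can})=1$ and identifies the limit with $M_g^{\rm G}$. Measurability follows because each $g\mapsto M^{\rm G}_{n,g}$ depends on finitely many coordinates, the Cauchy set is Borel, and a pointwise limit of measurable maps into the Polish space $(\mathcal M^+_{\rm fin}(D),d_{\rm w})$ is measurable.

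I do not expect a genuine obstacle. The one point to check is that nothing in the proof of \cref{prop:chaos} secretly used the standing non-Gaussian or positive-density assumptions. I would confirm this by inspection: those assumptions enter only later, in the conditional orbit densities and in \cref{lem:full-support}, which is not part of the present statement. With that confirmed, the proof can be written as a short remark that the proof of \cref{prop:chaos} applies with $\mathsf p=\mathcal N(0,1)$ and with the exact Gaussian two-point identity in place of the cumulant estimate.
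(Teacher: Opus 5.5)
Your proposal is correct and is essentially the paper's proof. The paper also records the martingale property, uses the exact Gaussian identity $\Ex^{\rm G}[W_{n,g}^{\rm G}(x)W_{n,g}^{\rm G}(y)]=\exp(\gamma^2K_n(x,y))$ together with \cref{lem:spectral-covariance} to obtain the uniform second-moment bound for $\gamma<\sqrt2$, and then states that the dense-test-function and boundary-tightness argument from the proof of \cref{prop:chaos} applies verbatim; your check that neither non-Gaussianity nor positivity of the density enters that argument is exactly what justifies the word ``verbatim''.
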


\begin{proof}
Put
\[
 W_{n,g}^{\rm G}(x):=
 \frac{e^{\gamma S_{n,g}^{\rm G}(x)}}
 {\Ex^{\rm G} e^{\gamma S_{n,g}^{\rm G}(x)}}.
\]
With respect to the filtration generated by the active Gaussian modes,
$\int_D hW_{n,g}^{\rm G}\,dx$ is a martingale, and Gaussianity gives
\begin{equation*}
 \Ex^{\rm G}\!\left[W_{n,g}^{\rm G}(x)W_{n,g}^{\rm G}(y)\right]
 =\exp\!\bigl(\gamma^2K_n(x,y)\bigr).
\end{equation*}
Hence \cref{lem:spectral-covariance} yields
\begin{equation*}
 \Ex^{\rm G}\!\left[W_{n,g}^{\rm G}(x)W_{n,g}^{\rm G}(y)\right]
 \le C_\gamma\bigl(1+|x-y|^{-\gamma^2}\bigr).
\end{equation*}
Since $\gamma^2<2$,
\begin{equation*}
 \sup_n\Ex^{\rm G}\left|\int_D h\,dM_{n,g}^{\rm G}\right|^2
 \le C_\gamma\|h\|_\infty^2
 \iint_{D^2}\bigl(1+|x-y|^{-\gamma^2}\bigr)\,dx\,dy<\infty.
\end{equation*}
The dense-test-function and boundary-tightness argument in the proof of
\cref{prop:chaos} applies verbatim.  It gives the asserted convergence,
$\Prob^{\rm G}(\Omega_{\rm can}^{\rm G})=1$, and measurability of the
canonical limit map.
\end{proof}

The covariance kernels of the Gaussian approximants satisfy
\begin{equation*}
 K_n(x,y)
 =2\pi\sum_{\lambda_k\le2^{2n}}\lambda_k^{-1}e_k(x)e_k(y),
 \qquad
 G_D(x,y)=2\pi\sum_{k\ge1}\lambda_k^{-1}e_k(x)e_k(y),
\end{equation*}
and therefore
\begin{equation*}
 \|K_n-G_D\|_{L^2(D^2)}^2
 =(2\pi)^2\sum_{\lambda_k>2^{2n}}\lambda_k^{-2}
 \longrightarrow0.
\end{equation*}
The full Gaussian spectral series is the Karhunen--Lo\`eve realization of
the zero-boundary Dirichlet GFF with covariance kernel $G_D$.  The uniqueness
and approximation independence of subcritical Gaussian multiplicative chaos,
as recalled in \cite[Section~6.1]{CaiResponseSubmitted}, therefore identify
$M_g^{\rm G}$ with the Dirichlet GMC used there.  In particular, the
singularity and quasi-support results quoted below apply to $M_g^{\rm G}$.

\begin{lemma}
\label{lem:gaussian-comparison}
There exists a coupling $\mathbf P$ of $\Prob$ and $\Prob^{\rm G}$ on
$\Omega\times\Omega^{\rm G}$ such that
\begin{equation}
 M_b\sim M_g^{\rm G}
 \quad\text{for $\mathbf P$-almost every }(b,g).
 \label{eq:gaussian-comparison}
\end{equation}
\end{lemma}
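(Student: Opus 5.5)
The plan is to build $\mathbf P$ so that the two approximating fields differ by a field that converges uniformly on $\overline D$, and then compare the normalizations deterministically. More precisely, I aim for a coupling under which, $\mathbf P$-almost surely,
\begin{equation*}
 S_{n,b}-S^{\rm G}_{n,g}\longrightarrow H_{b,g}
 \quad\text{uniformly on }\overline D,
\end{equation*}
with $H_{b,g}$ continuous and bounded. Granting this, the rest is straightforward. The ratio $\Ex^{\rm G}e^{\gamma S^{\rm G}_{n,g}(x)}/Z_{n,\gamma}(x)$ converges uniformly on $\overline D$ to a continuous positive function $\rho$. This follows from the cumulant expansion already used in the proof of \cref{prop:chaos}: the quadratic parts of $\chi$ and of the Gaussian log-Laplace transform agree, and the remainders $O(|\gamma\psi_k(x)|^3)$ are summable uniformly in $x$ because $\|\psi_k\|_\infty\le C\lambda_k^{-1/2}$ and $\sum_k\lambda_k^{-3/2}<\infty$. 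Consequently
\begin{equation*}
 M_{n,b}=e^{\gamma(S_{n,b}-S^{\rm G}_{n,g})}\,\frac{\Ex^{\rm G}e^{\gamma S^{\rm G}_{n,g}}}{Z_{n,\gamma}}\,M^{\rm G}_{n,g},
\end{equation*}
and the prefactor converges uniformly to $e^{\gamma H}\rho$, which is bounded above and below by positive constants. Passing to the narrow limit, using \cref{prop:chaos} and \cref{lem:gaussian-reference} on $\Omega_{\rm can}\times\Omega^{\rm G}_{\rm can}$, gives $M_b=e^{\gamma H}\rho\,M^{\rm G}_g$. This is mutual absolute continuity with a density bounded away from $0$ and $\infty$, which is stronger than \eqref{eq:gaussian-comparison}.

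To build the coupling, I would group the modes into dyadic blocks $B_j=\{k:2^{2(j-1)}<\lambda_k\le2^{2j}\}$ and couple block by block, independently across $j$. Within $B_j$ the block field $\Phi_j=\sum_{k\in B_j}b_k\psi_k$ is a sum of about $4^j$ independent terms of size $2^{-j}$. The first few blocks can be coupled arbitrarily, since they contribute a bounded continuous function anyway. For large $j$ I would discretize $\overline D$ on a grid of mesh $2^{-j(1+\epsilon)}$. Both block fields are $O(2^{j}\cdot j^{1/2})$-Lipschitz with overwhelming probability, by Bernstein's inequality, which the exponential moments in \eqref{eq:coefficient-law} make available. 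So controlling the grid values suffices up to an error $2^{-\epsilon j}\sqrt j$. On the grid I would apply a multivariate strong approximation (Zaitsev's inequality for sums of independent vectors with finite exponential moments) to the vector of grid evaluations, coupling it to the Gaussian vector with the same covariance. Since the covariances of the non-Gaussian and Gaussian block fields coincide exactly, the Gaussian side is exactly $(\Phi^{\rm G}_j)$ on the grid. Extending to a coupling of the full coefficient sequences is then done by regular conditional laws. The target is
\begin{equation*}
 \mathbf P\Bigl(\sup_{\overline D}|\Phi_j-\Phi^{\rm G}_j|>2^{-\eta j}\Bigr)\le C2^{-\eta j}
\end{equation*}
for some $\eta>0$. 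Borel--Cantelli then gives summability of $\sup|\Phi_j-\Phi_j^{\rm G}|$, hence uniform convergence to a continuous $H$.

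The main obstacle is the grid step: the dimension of the grid grows like $4^{j(1+\epsilon)}$, and Zaitsev's constants degrade with dimension. I would first try to exploit the fact that each summand vector $b_k\psi_k|_{\rm grid}$ is rank one with tiny norm. Alternatively, I would avoid high dimension by coupling sub-blocks of coefficients through a Lindeberg-type swap, in the spirit of \cite[Section~3]{KimKriechbaum}. If neither closes, the fallback is to import the full-subcritical coupling of \cite[Section~3.4]{BasuGanguly}. That coupling yields mutual absolute continuity of the Fourier chaoses directly, with the tensor-sine basis replacing the Fourier basis via \cref{lem:spectral-covariance}. The normalization comparison above would then still be used to pass from their normalization to \eqref{eq:chaos-approx}.
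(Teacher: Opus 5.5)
Your main route cannot work, and the obstacle is not the dimension dependence in Zaitsev's inequality. The premise that some coupling makes $S_{n,b}-S^{\rm G}_{n,g}$ converge uniformly on $\overline D$ is false for \emph{every} coupling. The $\psi_k$ are $L^2(D)$-orthogonal, with $\|\psi_k\|_{L^2(D)}^2=2\pi/\lambda_k\ge2\pi4^{-j}$ for $k\in B_j$. Hence, for any coupling and any $1$-Lipschitz $h:\R\to\R$, Cauchy--Schwarz gives
\begin{equation*}
 \|\Phi_j-\Phi^{\rm G}_j\|_{L^2(D)}^2
 \ge\frac{2\pi}{4^j}\sum_{k\in B_j}(b_k-g_k)^2
 \ge\frac{2\pi\,\#B_j}{4^j}\Bigl(\frac1{\#B_j}\sum_{k\in B_j}\bigl(h(b_k)-h(g_k)\bigr)\Bigr)^2 .
\end{equation*}
Whatever the joint law, the weak law of large numbers applied to each marginal separately sends the inner average in probability to $\int h\,d\mathsf p-\int h\,d\mathcal N(0,1)$. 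This limit is nonzero for a suitable $h$ because $\mathsf p\ne\mathcal N(0,1)$, and $\#B_j\asymp4^j$. So $\|\Phi_j-\Phi^{\rm G}_j\|_{L^2(D)}\ge c>0$ with probability tending to one. Since $|D|=1$ gives $\sup_{\overline D}|\cdot|\ge\|\cdot\|_{L^2(D)}$, your target bound $\mathbf P(\sup_{\overline D}|\Phi_j-\Phi_j^{\rm G}|>2^{-\eta j})\le C2^{-\eta j}$ is impossible. Moreover, infinitely many blocks contribute at least $c^2$ and the blocks are $L^2(D)$-orthogonal, so $\|S_{n,b}-S^{\rm G}_{n,g}\|_{L^2(D)}\to\infty$ almost surely. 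Thus no bounded continuous $H$ exists, and the identity $M_b=e^{\gamma H}\rho\,M^{\rm G}_g$ is unavailable. Your Lindeberg-swap variant fails for the same reason, since it aims at the same uniform closeness. Equivalence of the two chaoses must be proved at the level of the measures, which live on a Lebesgue-null set. That is precisely the content of the comparison theorem you list only as a fallback.

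That fallback is the paper's proof. There the lemma follows by citing \cite[Section~3.4]{BasuGanguly} after checking its hypotheses:
\begin{itemize}
\item the stretched-exponential tail condition follows from the exponential moments in \eqref{eq:coefficient-law} via the Chernoff bound $\Prob(|b_1|>t)\le e^{-\lambda t}(\Ex e^{\lambda b_1}+\Ex e^{-\lambda b_1})$;
\item the unit-cube argument there, in $d=2$, uses the weights $\lambda_k^{-d/4}=\lambda_k^{-1/2}$ and the dyadic increments $\lambda_k\in(2^{2(n-1)},2^{2n}]$ of \eqref{eq:field};
\item \cref{lem:spectral-covariance} gives logarithmic coefficient one on compacts;
\item $\gamma<\sqrt2<2$ lies in the subcritical range.
\end{itemize}
You omit one step the paper does record. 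The limits produced under the coupling must be identified with the canonical $M_b$ and $M^{\rm G}_g$. This holds because $\Omega_{\rm can}$ and $\Omega^{\rm G}_{\rm can}$ have full marginal probability, so the coupled limits agree $\mathbf P$-almost surely with the fixed versions. Your cumulant comparison of $Z_{n,\gamma}$ with the Gaussian normalization is correct: the ratio converges uniformly to a continuous positive function. It would be the right tool if the cited theorem used a different normalization, but all the substance lies in the imported coupling.
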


\begin{proof}
For every $\lambda,t>0$,
\begin{equation*}
 \Prob(|b_1|>t)
 \le e^{-\lambda t}
 \bigl(\Ex e^{\lambda b_1}+\Ex e^{-\lambda b_1}\bigr),
\end{equation*}
so \eqref{eq:coefficient-law} gives the stretched-exponential tail condition
of \cite{BasuGanguly}.  In dimension $d=2$, the unit-cube argument in
\cite[Section~3.4]{BasuGanguly} uses
\begin{equation*}
 \lambda_k^{-d/4}=\lambda_k^{-1/2},
 \qquad
 \lambda_k\in\bigl(2^{2(n-1)},2^{2n}\bigr],
\end{equation*}
which are respectively the weights and cutoff increments used here.  Moreover,
for every compact $K\Subset D$, \cref{lem:spectral-covariance} gives
\begin{equation*}
 K_n(x,y)
 =\log^+\!\frac1{|x-y|\vee2^{-n}}+O_K(1),
 \qquad x,y\in K,
\end{equation*}
so the logarithmic coefficient is one.  Since $0<\gamma<\sqrt2<2$,
\cite[Section~3.4]{BasuGanguly} yields a coupling for which the two limiting
chaos measures are equivalent.  The marginal convergence events in
\cref{prop:chaos,lem:gaussian-reference} remain of full measure under this
coupling, so these limits are the canonical versions $M_b$ and $M_g^{\rm G}$
fixed above.  This proves \eqref{eq:gaussian-comparison}.
\end{proof}

Fix an exponent $\alpha$ satisfying \eqref{eq:alpha-range}, and let
$(O_j)_{j\ge1}$ be a countable base of non-empty open subsets of $D$.

\begin{proposition}
\label{prop:structural}
There exists $\Omega_*\in\Borel(\Omega)$ such that
\begin{equation*}
 \Omega_*\subset\Omega_{\rm can}\cap\Omega_{\rm Fr}(\alpha),
 \qquad
 \Prob(\Omega_*)=1,
\end{equation*}
and the following assertions hold for every $b\in\Omega_*$.
\begin{enumerate}[label=\textnormal{(\roman*)}]
\item
\begin{equation}
 0<M_b(D)<\infty,\qquad M_b\perp dx,\qquad \supp M_b=D,\qquad
 M_b(\{x\})=0\quad(x\in D),
 \label{eq:structural-measure}
\end{equation}
and
\begin{equation*}
 M_b(B(x,r)\cap D)
 \le C_\alpha C_{{\rm Fr},\alpha}(b)r^\alpha,
 \qquad x\in D,\quad0<r\le1.
\end{equation*}
\item For every $\E$-polar set $N$ and every $u\in H_0^1(D)$,
\begin{equation}
 M_b(N)=0,\qquad
 \widetilde u=0\ M_b\text{-a.e.}
 \Longrightarrow u=0\quad\E\text{-q.e.}
 \label{eq:structural-quasi-support}
\end{equation}
Thus $M_b$ is smooth and has full quasi-support.
\item The time-changed form
\begin{equation}
 V_b=\{u\in H_0^1(D):\widetilde u\in L^2(M_b)\},
 \qquad (\E,V_b)
 \label{eq:trace-form}
\end{equation}
is densely defined and closed on $L^2(M_b)$.
\end{enumerate}
\end{proposition}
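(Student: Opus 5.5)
The plan is to define $\Omega_*$ as an intersection of countably many full-probability Borel events, each handling one assertion. Several come from the non-Gaussian construction directly, and the rest are transferred from the Gaussian reference through \cref{lem:gaussian-comparison}. The starting point is $\Omega_{\rm can}\cap\Omega_{\rm Fr}(\alpha)$, which has full measure by \cref{prop:chaos,lem:frostman}. On it, the Frostman bound in (i) is exactly \eqref{eq:frostman}. Finiteness of $M_b(D)$ holds by construction, and absence of atoms follows from the Frostman bound by letting $r\downarrow0$. Full support, and with it $M_b(D)>0$, comes from intersecting with the event of \cref{lem:full-support}, written as the countable intersection $\bigcap_j\{X_{g_j}>0\}$, which is Borel because $b\mapsto M_b$ is measurable.

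Singularity and quasi-support are transferred from the Gaussian side. By the identification with the Dirichlet GMC of \cite[Section~6.1]{CaiResponseSubmitted}, I will take as known the $\Prob^{\rm G}$-a.s. statements for $M^{\rm G}_g$. These are $M^{\rm G}_g\perp dx$, $M^{\rm G}_g$ charges no $\E$-polar set, and $M^{\rm G}_g$ has full quasi-support, meaning $\widetilde u=0$ $M^{\rm G}_g$-a.e. forces $u=0$ q.e. Each of these depends only on the measure class of $M^{\rm G}_g$. Under the coupling $\mathbf P$, $M_b\sim M_g^{\rm G}$ almost surely. Hence the set $E$ of $b$ for which $M_b$ has all three properties satisfies $\Prob(E)=\mathbf P(\{(b,g):b\in E\})\ge\mathbf P(\{M_b\sim M^{\rm G}_g,\ g\in E^{\rm G}\})=1$.

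The main obstacle is measurability of $E$. The properties ``charges no polar set'' and ``full quasi-support'' quantify over uncountably many sets or functions. I will bypass this by taking for $\Omega_*$ a Borel subset of full measure inside the (possibly non-Borel but $\Prob$-completion-measurable) set $E$. Such a subset exists since $E$ contains the projection of a full-measure set, and projections of Borel sets are universally measurable. Alternatively, singularity can be written with a countable criterion, for instance via densities along dyadic partitions, which gives a Borel event directly. Smoothness of $M_b$ in the Fukushima sense also needs finite energy integral on a nest. I will obtain this from the Frostman bound: with $\alpha>0$, the measure $M_b$ restricted to compacts has finite Green energy $\iint G_D\,dM_b\,dM_b<\infty$, since $G_D\le C+\log^+|x-y|^{-1}$ and $\int\log^+|x-y|^{-1}M_b(dy)$ is bounded by the Frostman estimate. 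So $M_b$ is in fact of finite energy.

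For (iii), I will use the standard theory of time changes in Fukushima--Oshima--Takeda. For a smooth measure with full quasi-support relative to the regular transient form $(\E,H_0^1(D))$, the trace form $(\E,V_b)$ on $L^2(M_b)$ is closed and regular, in particular densely defined. Closedness itself can also be checked by hand. Let $u_n$ be $\E$-Cauchy with $\widetilde u_n\to f$ in $L^2(M_b)$. Transience gives $u_n\to u$ in $H_0^1(D)$, and a subsequence converges q.e. Since $M_b$ charges no polar set, $\widetilde u=f$ $M_b$-a.e. Full quasi-support is what makes the map $u\mapsto\widetilde u$ injective modulo $M_b$-null sets, so the form is well defined on $L^2(M_b)$. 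Density follows because $C_c^\infty(D)\subset V_b$ is dense in $L^2(M_b)$, as $M_b$ is a finite Radon measure.
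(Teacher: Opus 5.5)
Your proposal is correct and follows the paper's proof essentially step for step. The direct properties in (i) come from $\Omega_{\rm can}\cap\Omega_{\rm Fr}(\alpha)$ and the full-support event. Singularity, polar-nullity and full quasi-support are transferred through the coupling $M_b\sim M_g^{\rm G}$ of \cref{lem:gaussian-comparison}. Measurability is handled, as in the paper, by projecting a Borel full-measure set from the coupling space, which gives an analytic and hence universally measurable set, and then taking a Borel full-measure subset. Part (iii) is the Fukushima--Oshima--Takeda time-change theorem.

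Your extra observations are valid but not needed for the paper's argument: the finite Green energy of $M_b$ obtained from the Frostman bound (which would by itself give smoothness and polar-nullity), and the hands-on closedness check.
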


\begin{proof}
Set
\begin{equation*}
 \Omega_{\rm dir}:=
 \Omega_{\rm can}\cap\Omega_{\rm Fr}(\alpha)
 \cap\bigcap_{j\ge1}\{M_b(O_j)>0\}.
\end{equation*}
Since $b\mapsto M_b$ is measurable and evaluation on a fixed open set is
measurable,
\begin{equation*}
 \Omega_{\rm dir}\in\Borel(\Omega),
 \qquad
 \Prob(\Omega_{\rm dir})=1
\end{equation*}
by \cref{prop:chaos,lem:frostman,lem:full-support}.

The identification above and
\cite[Lemmas~6.6 and 6.28]{CaiResponseSubmitted} give a
$\Prob^{\rm G}$-completed full event on which
\begin{equation*}
 M_g^{\rm G}\perp dx,
 \qquad
 M_g^{\rm G}(N)=0\quad\text{for every $\E$-polar set }N,
\end{equation*}
and
\begin{equation*}
 \widetilde u=0\ M_g^{\rm G}\text{-a.e.}
 \Longrightarrow u=0\quad\E\text{-q.e.},
 \qquad u\in H_0^1(D).
\end{equation*}
Choose a Borel subset $\Omega_{\rm tr}^{\rm G}\subset\Omega_{\rm can}^{\rm G}$
of full $\Prob^{\rm G}$-measure on which these conclusions hold.

Let $\mathbf P$ be the coupling from \cref{lem:gaussian-comparison}.  Under
$\mathbf P$, the conditions
\begin{equation*}
 b\in\Omega_{\rm dir},\qquad
 g\in\Omega_{\rm tr}^{\rm G},\qquad
 M_b\sim M_g^{\rm G}
\end{equation*}
hold on a completed full event.  Choose
\begin{equation*}
 E\in\Borel(\Omega\times\Omega^{\rm G}),
 \qquad
 \mathbf P(E)=1,
\end{equation*}
inside that event, and put
\begin{equation*}
 \Omega_{\rm an}:=\operatorname{proj}_{\Omega}E.
\end{equation*}
The set $\Omega_{\rm an}$ is analytic and therefore universally measurable.  Since the first
marginal of $\mathbf P$ is $\Prob$,
\begin{equation*}
 1=\mathbf P(E)
 \le\mathbf P(\Omega_{\rm an}\times\Omega^{\rm G})
 =\Prob(\Omega_{\rm an}),
\end{equation*}
so $\Prob(\Omega_{\rm an})=1$.  Moreover,
$\Omega_{\rm an}\subset\Omega_{\rm dir}$.  As $\Omega$ is a
standard Borel space, there is a Borel set
\begin{equation*}
 \Omega_*\subset\Omega_{\rm an},
 \qquad
 \Prob(\Omega_*)=1.
\end{equation*}

Fix $b\in\Omega_*$.  Since $\Omega_*\subset\Omega_{\rm an}\subset\Omega_{\rm dir}$,
\cref{prop:chaos,lem:frostman,lem:full-support} give
\begin{equation*}
 0<M_b(D)<\infty,
 \qquad
 \supp M_b=D,
\end{equation*}
together with the Frostman bound in \textnormal{(i)}; letting $r\downarrow0$
in that bound gives $M_b(\{x\})=0$ for every $x\in D$.

By the definition of $\Omega_{\rm an}$, there exists $g\in\Omega_{\rm tr}^{\rm G}$ with
$(b,g)\in E$.  Hence $M_b\sim M_g^{\rm G}$, and therefore
\begin{equation*}
 M_b\perp dx,
 \qquad
 M_b(N)=0\quad\text{for every $\E$-polar set }N,
\end{equation*}
while
\begin{equation*}
 \widetilde u=0\ M_b\text{-a.e.}
 \Longrightarrow
 \widetilde u=0\ M_g^{\rm G}\text{-a.e.}
 \Longrightarrow u=0\quad\E\text{-q.e.}
\end{equation*}
for every $u\in H_0^1(D)$.  This proves \textnormal{(i)}--\textnormal{(ii)}.
The full-quasi-support time-change theorem
\cite[Theorem~6.2.1 and (6.2.22)]{FukushimaOshimaTakeda} gives
\textnormal{(iii)}.
\end{proof}

Fix one such $\Omega_*$ for the remainder of the article and define the
finite-mode orbit class
\begin{equation}
 \mathcal M_*:=\{e^{\gamma f}M_b:\ b\in\Omega_*,\ f\in\mathcal T\}.
 \label{eq:orbit-class}
\end{equation}
These are precisely the measures used in the deterministic response argument.

\begin{corollary}
\label{cor:structural-orbit}
Every $\mu\in\mathcal M_*$ satisfies the conclusions of
\cref{prop:structural}\textnormal{(i)}--\textnormal{(iii)}.  In the Frostman
bound one may write
\begin{equation*}
 \mu(B(x,r)\cap D)\le C_\mu r^\alpha,
 \qquad x\in D,\quad0<r\le1,
\end{equation*}
for some $C_\mu<\infty$.
\end{corollary}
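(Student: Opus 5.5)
The plan is to reduce everything to the fact that $\mu=e^{\gamma f}M_b$ with $b\in\Omega_*$ and $f\in\mathcal T$ has a density with respect to $M_b$ that is bounded above and below. Since $f$ is a finite combination of the $e_k$, it is smooth and bounded on $\overline D$. Set $c_f:=\gamma\|f\|_\infty$. Then
\[
 e^{-c_f}M_b\le\mu\le e^{c_f}M_b
\]
as measures on $D$. In particular $\mu$ and $M_b$ are mutually absolutely continuous, and both $\mu$ and $M_b$ are dominated by a constant multiple of the other.

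The properties in \cref{prop:structural}\textnormal{(i)} then transfer directly. First, $0<e^{-c_f}M_b(D)\le\mu(D)\le e^{c_f}M_b(D)<\infty$. Singularity with respect to $dx$, full support and the absence of atoms are all invariant under equivalence of measures. For the Frostman bound I would take
\[
 C_\mu:=e^{c_f}C_\alpha C_{{\rm Fr},\alpha}(b),
\]
so that $\mu(B(x,r)\cap D)\le e^{c_f}M_b(B(x,r)\cap D)\le C_\mu r^\alpha$.

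For \textnormal{(ii)}, $\E$-polar sets are $M_b$-null, and hence $\mu$-null. Also, $\widetilde u=0$ $\mu$-a.e.\ is the same as $\widetilde u=0$ $M_b$-a.e., because the two measures are equivalent, so the quasi-support implication \eqref{eq:structural-quasi-support} carries over verbatim.

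For \textnormal{(iii)}, $L^2(\mu)=L^2(M_b)$ as sets, with equivalent norms. Consequently
\[
 V_\mu:=\{u\in H_0^1(D):\widetilde u\in L^2(\mu)\}=V_b.
\]
Density of $V_b$ in $L^2(\mu)$ follows from density in $L^2(M_b)$ and the equivalence of norms. Closedness means completeness of $V_b$ under $\E_1^\mu:=\E+\|\cdot\|_{L^2(\mu)}^2$. This norm is equivalent to $\E_1^{M_b}$, which is complete by \cref{prop:structural}\textnormal{(iii)}. Alternatively, \textnormal{(iii)} can be re-derived for $\mu$ from \cite[Theorem~6.2.1]{FukushimaOshimaTakeda} using \textnormal{(ii)} for $\mu$.

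I do not expect a real obstacle. The only point needing care is that $f$ is bounded on all of $D$ up to the boundary, which holds because $\mathcal T$ consists of finite sums of the $e_k$. A second minor check is that ``closed'' is formulated as completeness with respect to the $\E_1$-norm, which is stable under equivalent reference measures.
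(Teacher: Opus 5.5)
Your proposal is correct and matches the paper's proof. The paper likewise uses the two-sided comparison $e^{-\gamma\|f\|_\infty}M_b\le\mu\le e^{\gamma\|f\|_\infty}M_b$ to get equal null sets and supports, takes the same constant $C_\mu=e^{\gamma\|f\|_\infty}C_\alpha C_{{\rm Fr},\alpha}(b)$, and transfers (i)--(ii) from \cref{prop:structural}. The only difference is in (iii): the paper re-invokes the full-quasi-support time-change theorem, whereas your main argument ($V_\mu=V_b$ with equivalent form norms, and $L^2(\mu)=L^2(M_b)$ with equivalent norms) is an equally valid and slightly more elementary transfer.
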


\begin{proof}
Write $\mu=e^{\gamma f}M_b$ with $b\in\Omega_*$ and $f\in\mathcal T$.  Since
$f$ is bounded,
\begin{equation*}
 e^{-\gamma\|f\|_\infty}M_b
 \le \mu
 \le e^{\gamma\|f\|_\infty}M_b.
\end{equation*}
Thus $\mu$ and $M_b$ have the same null sets and support, while
\[
 C_\mu:=e^{\gamma\|f\|_\infty}
 C_\alpha C_{{\rm Fr},\alpha}(b)<\infty
\]
gives the stated Frostman bound.  The remaining assertions in
\textnormal{(i)}--\textnormal{(ii)} follow from \cref{prop:structural}, and the
full-quasi-support time-change theorem gives \textnormal{(iii)}.
\end{proof}

For $\mu\in\mathcal M_*$ write
\begin{equation*}
 V_\mu:=\{u\in H_0^1(D):\widetilde u\in L^2(\mu)\},
 \qquad
 \|u\|_{\E,\mu}^2:=\E(u,u)+\|u\|_{L^2(\mu)}^2,
\end{equation*}
and let $A_\mu$ be the non-negative self-adjoint operator associated with the
closed form $(\E,V_\mu)$ on $L^2(\mu)$.

For $x,y\in D$ put
\begin{equation*}
 \ell_x(y):=1+\log^+\frac1{|x-y|}.
\end{equation*}
The Frostman estimate on $\mathcal M_*$ gives the logarithmic control needed
for the Green kernel.

\begin{lemma}
\label{lem:log-square}
For every $\mu\in\mathcal M_*$,
\begin{equation}
 \sup_{x\in D}\int_D\ell_x(y)^2\,d\mu(y)<\infty,
 \label{eq:log-square-bound}
\end{equation}
and
\begin{equation}
 \lim_{\rho\downarrow0}\sup_{x\in D}
 \int_{B(x,\rho)\cap D}\ell_x(y)^2\,d\mu(y)=0.
 \label{eq:log-square-local}
\end{equation}
\end{lemma}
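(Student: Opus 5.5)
The plan is a dyadic-shell decomposition of the ball $B(x,\rho)$ combined with the Frostman bound of \cref{cor:structural-orbit}. Fix $\mu\in\mathcal M_*$ and write the Frostman estimate as $\mu(B(x,r)\cap D)\le C_\mu r^\alpha$ for $x\in D$ and $0<r\le1$. The weight $\ell_x(y)^2$ is bounded by $1$ once $|x-y|\ge1$. On the region $|x-y|<1$ it is a radial function of $|x-y|$, and it is bounded on each shell
\[
 A_j(x):=\{y\in D:2^{-j-1}\le|x-y|<2^{-j}\},\qquad j\ge0.
\]
Indeed, on $A_j(x)$ we have $\ell_x(y)\le 1+(j+1)\log2\le C(j+1)$.

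First I record that $\mu(\{x\})=0$, which is part of \cref{prop:structural}(i) as transferred to $\mu$ in \cref{cor:structural-orbit}. Thus the diagonal point carries no mass, and
\[
 B(x,2^{-J})\cap D=\bigcup_{j\ge J}A_j(x)
\]
up to a $\mu$-null set. Using $A_j(x)\subset B(x,2^{-j})$ and the Frostman bound, I then get for every $J\ge0$
\begin{equation*}
 \int_{B(x,2^{-J})\cap D}\ell_x(y)^2\,d\mu(y)
 \le C\,C_\mu\sum_{j\ge J}(j+1)^2\,2^{-j\alpha}.
\end{equation*}
The right-hand side is independent of $x$. It is the tail of a convergent series, because $\alpha>0$ by \eqref{eq:alpha-range}. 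Given $\rho\in(0,1]$, I choose $J$ with $2^{-J}\ge\rho>2^{-J-1}$, so that $B(x,\rho)\subset B(x,2^{-J})$. Letting $\rho\downarrow0$ sends $J\to\infty$, which yields \eqref{eq:log-square-local}.

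For \eqref{eq:log-square-bound}, I split $D$ into $B(x,1)\cap D$ and its complement. The first piece is bounded uniformly in $x$ by the case $J=0$ of the estimate above. On the complement, $\ell_x\equiv1$, so that piece is at most $\mu(D)<\infty$, again by \cref{cor:structural-orbit}.

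I do not expect a genuine obstacle. The only point needing care is that the Frostman bound is stated for balls centred at points of $D$ with radii $r\le1$. Since the shells are all centred at $x\in D$ with radii $2^{-j}\le1$, the bound applies directly. The other small step is accounting for the centre point, which uses atomlessness. Uniformity in $x$ is automatic, because every constant depends only on $C_\mu$ and $\alpha$.
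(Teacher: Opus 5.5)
Your proposal is correct and follows essentially the same route as the paper: a dyadic-annulus decomposition around $x$, the bound $\ell_x\le C(k+1)$ on the $k$th annulus, and the Frostman estimate from \cref{cor:structural-orbit}, giving a uniform convergent series whose tails control the local integral. Your explicit use of $\mu(\{x\})=0$ to discard the centre point, where $\ell_x=\infty$, makes precise a step the paper leaves implicit; it also follows directly from the Frostman bound by letting $r\downarrow0$.
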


\begin{proof}
By \cref{cor:structural-orbit}, for some $C_\mu<\infty$,
\begin{equation*}
 \mu(B(x,r)\cap D)\le C_\mu r^\alpha,
 \qquad x\in D,\quad0<r\le1.
\end{equation*}
For $k\ge0$ set
\begin{equation*}
 \mathcal R_k(x):=(B(x,2^{-k})\cap D)\setminus B(x,2^{-k-1}).
\end{equation*}
Since $\ell_x\le C(k+1)$ on $\mathcal R_k(x)$,
\begin{align*}
 \int_D\ell_x(y)^2\,d\mu(y)
 &\le C\mu(D)+C\sum_{k\ge0}(k+1)^2\mu(B(x,2^{-k})\cap D)\\
 &\le C\mu(D)+CC_\mu\sum_{k\ge0}(k+1)^22^{-\alpha k}<\infty,
\end{align*}
uniformly in $x$, proving \eqref{eq:log-square-bound}.  If
$2^{-n-1}<\rho\le2^{-n}$, the same decomposition gives
\begin{equation*}
 \sup_{x\in D}\int_{B(x,\rho)\cap D}\ell_x(y)^2\,d\mu(y)
 \le CC_\mu\sum_{k\ge n-1}(k+1)^22^{-\alpha k}
 \longrightarrow0,
\end{equation*}
which is \eqref{eq:log-square-local}.
\end{proof}

By domain monotonicity of the planar Green function,
\begin{equation}
 0\le G_D(x,y)\le C_D+\log^+\frac1{|x-y|}.
 \label{eq:green-log}
\end{equation}
Hence \cref{lem:log-square} gives, for every $\mu\in\mathcal M_*$,
\begin{equation}
 K_G(\mu):=\sup_{x\in D}\int_DG_D(x,y)^2\,d\mu(y)<\infty,
 \qquad
 \iint_{D^2}G_D(x,y)^2\,d\mu(x)d\mu(y)
 \le \mu(D)K_G(\mu)<\infty.
 \label{eq:green-HS}
\end{equation}
For $\mu\in\mathcal M_*$ and $h\in L^2(\mu)$, set
\begin{equation*}
 G_\mu h(x):=\int_DG_D(x,y)h(y)\,d\mu(y).
\end{equation*}
The next lemma identifies this kernel potential with an $H_0^1(D)$ element.

\begin{lemma}
\label{lem:finite-green-potential}
For every $\mu\in\mathcal M_*$ and every real-valued $h\in L^2(\mu)$,
\begin{equation}
 \iint_{D^2}G_D(x,y)|h(x)h(y)|\,d\mu(x)d\mu(y)<\infty.
 \label{eq:hmu-green-energy}
\end{equation}
There is a unique $U_\mu h\in H_0^1(D)$ such that
\begin{equation}
 \E(U_\mu h,v)=\int_Dh\widetilde v\,d\mu,
 \qquad v\in H_0^1(D),
 \label{eq:potential-pairing}
\end{equation}
and
\begin{equation}
 \widetilde{U_\mu h}(x)=G_\mu h(x)
 \quad\text{for quasi-every }x\in D.
 \label{eq:potential-green}
\end{equation}
\end{lemma}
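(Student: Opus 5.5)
Estimate \eqref{eq:hmu-green-energy} comes first, by Cauchy--Schwarz against the Green kernel. Using $G_D\ge0$ and $|h(x)h(y)|\le\frac12(h(x)^2+h(y)^2)$, symmetry of $G_D$ reduces the double integral to
\[
\int_D h(x)^2\Bigl(\int_D G_D(x,y)\,d\mu(y)\Bigr)d\mu(x)\le \sup_x\int_D G_D(x,y)\,d\mu(y)\,\|h\|_{L^2(\mu)}^2 .
\]
The inner supremum is at most $(\mu(D)K_G(\mu))^{1/2}$ by \eqref{eq:green-HS}, or directly by \eqref{eq:green-log} and \cref{lem:log-square}. Thus the signed measure $\nu:=h\mu$ has finite mutual energy $\iint G_D\,d|\nu|\,d|\nu|<\infty$.

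For existence, let $\nu_\pm:=h^\pm\mu$, which are positive measures of finite $\E$-energy by the previous step. By \cref{prop:structural}(ii) and \cref{cor:structural-orbit}, $\mu$ charges no $\E$-polar set, so $\widetilde v$ is $\mu$-a.e. well defined. The standard theory of measures of finite energy (Fukushima--Oshima--Takeda, Theorem~2.2.5 and Lemma~2.2.3 in the transient setting, valid since $\E$ on $H_0^1(D)$ is transient) then gives a unique potential $U\nu_\pm\in H_0^1(D)$ with $\E(U\nu_\pm,v)=\int\widetilde v\,d\nu_\pm$. It also gives $\widetilde{U\nu_\pm}=G\nu_\pm$ quasi-everywhere, where $G$ is the 0-order resolvent kernel; here that kernel is $G_D$ for $L_0$, with normalization $-\Delta G_D=2\pi\delta$ matching $\E=\frac1{2\pi}\int\nabla u\cdot\nabla v$.

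Set $U_\mu h:=U\nu_+-U\nu_-$. Linearity gives \eqref{eq:potential-pairing}, and \eqref{eq:potential-green} follows because $G_\mu h=G_D\nu_+-G_D\nu_-$ wherever both are finite, which holds q.e. since each equals a quasi-continuous function q.e. Uniqueness is immediate: two solutions differ by $w\in H_0^1(D)$ with $\E(w,v)=0$ for all $v$, and taking $v=w$ together with the Poincaré inequality gives $w=0$.

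The main obstacle is the quasi-everywhere identification of the potential with the Green integral. If one prefers not to quote the fine-potential theorem, a direct route is to approximate $\nu_\pm$ by mollified measures $\nu^\varepsilon=(h^\pm\mu)*\phi_\varepsilon$, restricted to compact subsets. For these the identity $U\nu^\varepsilon=G_D\nu^\varepsilon$ is classical. One then shows $U\nu^\varepsilon\to U\nu$ in $H_0^1$ by energy convergence, and that $G_D\nu^\varepsilon\to G_D\nu$ pointwise off a polar set. Checking the latter requires lower semicontinuity of $G_D$ and the uniform local estimate \eqref{eq:log-square-local}, and extracting an $\E$-q.e. convergent subsequence from the $H_0^1$ convergence. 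I would quote the Fukushima--Oshima--Takeda result and use the mollification argument only as a fallback.
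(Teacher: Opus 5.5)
Your energy estimate is correct. Symmetrizing with $|h(x)h(y)|\le\frac12(h(x)^2+h(y)^2)$ even gives the stronger bound $\sup_x\int_D G_D(x,y)\,d\mu(y)<\infty$, where the paper instead uses Cauchy--Schwarz in $L^2(\mu\otimes\mu)$. Your uniqueness argument is also fine.

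\textbf{The gap.} It lies in the existence and identification step, which is the real content of the lemma.
\begin{enumerate}
\item Step one gives finite \emph{Green} energy, $\iint G_D\,d\nu_\pm\,d\nu_\pm<\infty$. It does not give that $\nu_\pm$ has finite energy integral for $(\E,H_0^1(D))$, i.e.\ $|\int v\,d\nu_\pm|\le C\,\E(v,v)^{1/2}$ for $v\in C_c^\infty(D)$. That inequality is the hypothesis of the Fukushima--Oshima--Takeda Chapter~2 results you quote. Passing from finite Green energy to it requires the Cauchy--Schwarz inequality for the Green energy form, i.e.\ positive-definiteness of $G_D$ on signed measures. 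This is not formal, and "by the previous step" does not justify it.
\item Those Chapter~2 results construct $U\nu_\pm$ by Riesz representation in $(H_0^1(D),\E)$ and say nothing about a kernel. So the identification $\widetilde{U\nu_\pm}=\int_D G_D(\cdot,y)\,\nu_\pm(dy)$ q.e.\ is only asserted.
\item Your mollification fallback does not repair this as stated. $G_D$ is not translation invariant, so $G_D(\nu*\phi_\varepsilon)$ is not a mollification of $G_D\nu$. Moreover, ``$U\nu^\varepsilon\to U\nu$ by energy convergence'' presupposes both the existence of $U\nu$ and the identity $\E(U\sigma,U\sigma)=\iint G_D\,d\sigma\,d\sigma$ for $\sigma=\nu^\varepsilon-\nu$. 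That is what has to be proved.
\end{enumerate}

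\textbf{How the paper closes both points.} It uses the heat-kernel truncation $G_t:=\int_t^\infty q_s^D\,ds$, which is a bounded kernel.
\begin{enumerate}
\item The functions $u_t^\pm:=\int_D G_t(\cdot,y)h_\pm(y)\,d\mu(y)=L_0^{-1}g_t^\pm$, with $g_t^\pm:=\int_D q_t^D(\cdot,y)h_\pm(y)\,d\mu(y)\in L^\infty(D)$, lie in $\Dom(L_0)$. They satisfy $\E(u_t^\pm,v)=\int_D(P_tv)h_\pm\,d\mu$.
\item The semigroup identity $\E(u_t^\pm,u_s^\pm)=\iint G_{t+s}(x,y)h_\pm(x)h_\pm(y)\,d\mu(x)d\mu(y)$, together with dominated convergence, makes $(u_t^\pm)$ Cauchy in $H_0^1(D)$. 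This is exactly where the Green-energy bound enters.
\item Letting $t\downarrow0$ gives the pairing on $C_c^\infty(D)$, hence the finite energy integral. Only then are the Chapter~2 results used to extend the pairing to all $v\in H_0^1(D)$.
\item A quasi-everywhere convergent subsequence $u_{t_j}^\pm\to\widetilde{U^\pm}$, combined with $G_t\uparrow G_D$ and monotone convergence, yields $\widetilde{U^\pm}=\int_D G_D(\cdot,y)h_\pm(y)\,d\mu(y)$ quasi-everywhere.
\end{enumerate}

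To complete your proof you need either this argument or a precise citation of the classical theorem that, for the Dirichlet Green kernel, finite Green energy is equivalent to finite energy integral, with the quasi-continuous potential equal to the Green potential q.e.
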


\begin{proof}
By \eqref{eq:green-HS} and Cauchy--Schwarz,
\begin{align*}
 &\iint_{D^2}G_D(x,y)|h(x)h(y)|\,d\mu(x)d\mu(y)\\
 &\qquad\le
 \|G_D\|_{L^2(\mu\otimes\mu)}
 \|h\otimes h\|_{L^2(\mu\otimes\mu)}
 <\infty,
\end{align*}
which proves \eqref{eq:hmu-green-energy}.  Write $h=h_+-h_-$.  Since $\mu$ is
finite, $h_\pm\mu$ are finite positive measures.  For $t>0$ put
\begin{equation*}
 G_t(x,y):=\int_t^\infty q_s^D(x,y)\,ds,
 \qquad
 u_t^\pm(x):=\int_DG_t(x,y)h_\pm(y)\,d\mu(y).
\end{equation*}
The kernel $G_t$ is bounded.  With $P_t=e^{-tL_0}$, set
\begin{equation*}
 g_t^\pm(x):=\int_Dq_t^D(x,y)h_\pm(y)\,d\mu(y).
\end{equation*}
The heat-kernel bound gives $g_t^\pm\in L^\infty(D)\subset L^2(D)$, and
$u_t^\pm=L_0^{-1}g_t^\pm\in\Dom(L_0)$; the displayed kernel integral is
its continuous representative.  For $v\in C_c^\infty(D)$,
\begin{equation}
 \E(u_t^\pm,v)=\int_D(P_tv)h_\pm\,d\mu.
 \label{eq:truncated-potential-pairing}
\end{equation}
Moreover, the semigroup identity gives
\begin{align*}
 &\E(u_t^\pm-u_s^\pm,u_t^\pm-u_s^\pm)\\
 &\quad=
 \iint_{D^2}
 \bigl(G_{2t}+G_{2s}-2G_{t+s}\bigr)(x,y)
 h_\pm(x)h_\pm(y)\,d\mu(x)d\mu(y)
 \longrightarrow0
\end{align*}
as $s,t\downarrow0$, by \eqref{eq:hmu-green-energy} and dominated convergence.
Thus $u_t^\pm$ converges in $H_0^1(D)$ to some $U^\pm$.  Since
$\|P_tv-v\|_\infty\le t\|L_0v\|_\infty$ for
$v\in C_c^\infty(D)$, passing to the limit in
\eqref{eq:truncated-potential-pairing} gives
\begin{equation*}
 \E(U^\pm,v)=\int_Dh_\pm v\,d\mu,
 \qquad
 \left|\int_Dh_\pm v\,d\mu\right|
 \le \E(U^\pm,U^\pm)^{1/2}\E(v,v)^{1/2}.
\end{equation*}
Thus $h_\pm\mu$ has finite energy integral.  Since
$h_\pm\mu\ll\mu$ and $\mu$ is smooth, it charges no polar set.  The
finite-energy potential theorem
\cite[Lemma~2.2.1(ii), Lemma~2.2.3, and Theorem~2.2.5]{FukushimaOshimaTakeda}
extends the pairing to
\begin{equation*}
 \E(U^\pm,v)=\int_Dh_\pm\widetilde v\,d\mu,
 \qquad v\in H_0^1(D).
\end{equation*}
Choose $t_j\downarrow0$ so that $u_{t_j}^\pm$ converges quasi-everywhere to
$\widetilde U^\pm$.  Since $G_t\uparrow G_D$,
\begin{equation*}
 \widetilde U^\pm(x)
 =\int_DG_D(x,y)h_\pm(y)\,d\mu(y)
 \quad\text{for quasi-every }x.
\end{equation*}
Setting $U_\mu h:=U^+-U^-$ proves
\eqref{eq:potential-pairing}--\eqref{eq:potential-green}.  Uniqueness follows
by testing the difference of two solutions against itself.
\end{proof}

\begin{proposition}
\label{prop:green-operator}
For every $\mu\in\mathcal M_*$ the following hold.
\begin{enumerate}[label=\textnormal{(\roman*)}]
\item The operator $G_\mu$ is positive, injective, self-adjoint and
Hilbert--Schmidt on $L^2(\mu)$, and
\begin{equation}
 G_\mu=A_\mu^{-1}.
 \label{eq:green-inverse}
\end{equation}
\item The form inclusion
\begin{equation}
 j_\mu:(V_\mu,\|\cdot\|_{\E,\mu})\longrightarrow L^2(\mu)
 \label{eq:compact-form-inclusion}
\end{equation}
is compact.  In particular, $A_\mu$ has compact resolvent, no zero mode, and
\begin{equation}
 0<\Lambda_1^\mu\le\Lambda_2^\mu\le\cdots\uparrow\infty.
 \label{eq:ordered-spectrum}
\end{equation}
\item
\begin{equation}
 G_\mu:L^2(\mu)\longrightarrow C_b(D),
 \label{eq:green-smoothing}
\end{equation}
so every eigenfunction of $A_\mu$ has a bounded continuous representative.
\end{enumerate}
\end{proposition}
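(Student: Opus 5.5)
The plan is to prove (i) first and derive (ii) and (iii) from it. Positivity, self-adjointness and the Hilbert--Schmidt property are direct consequences of the kernel. By \eqref{eq:green-HS} the kernel $G_D$ lies in $L^2(\mu\otimes\mu)$, so $G_\mu$ is Hilbert--Schmidt. Symmetry of $G_D$ then gives self-adjointness on $L^2(\mu)$. For positivity we use \cref{lem:finite-green-potential}: taking $v=U_\mu h$ in \eqref{eq:potential-pairing} and using \eqref{eq:potential-green}, together with $\mu$ not charging polar sets, gives
\[
\langle G_\mu h,h\rangle_{L^2(\mu)}=\int_D h\,\widetilde{U_\mu h}\,d\mu=\E(U_\mu h,U_\mu h)\ge0 .
\]
For injectivity, suppose $G_\mu h=0$ $\mu$-a.e. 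The display then shows $\E(U_\mu h,U_\mu h)=0$, so $U_\mu h=0$. Pairing \eqref{eq:potential-pairing} against all $v\in H_0^1(D)$ gives $\int h\widetilde v\,d\mu=0$. Since $\{\widetilde v:v\in V_\mu\}$ is dense in $L^2(\mu)$ (the form is densely defined by \cref{cor:structural-orbit}), we conclude $h=0$.

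To identify $G_\mu=A_\mu^{-1}$, note first that $U_\mu h\in H_0^1(D)$ and that its quasi-continuous version equals $G_\mu h$ q.e., hence $\mu$-a.e. Since $G_\mu h\in L^2(\mu)$ (Hilbert--Schmidt), this gives $U_\mu h\in V_\mu$. Then \eqref{eq:potential-pairing} reads $\E(U_\mu h,v)=\langle h,v\rangle_{L^2(\mu)}$ for all $v\in V_\mu$. By the definition of the operator associated with a closed form, this says $G_\mu h\in\Dom(A_\mu)$ and $A_\mu G_\mu h=h$. Hence $A_\mu$ is surjective. If $A_\mu u=0$, then $\E(u,u)=0$, so $u=0$ in $H_0^1(D)$ and hence q.e., which means $u=0$ $\mu$-a.e. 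So $A_\mu$ is bijective and $A_\mu^{-1}=G_\mu$.

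For (ii), $A_\mu^{-1}=G_\mu$ is compact and injective, so $A_\mu$ has compact resolvent, no kernel, and discrete spectrum $0<\Lambda_1^\mu\le\Lambda_2^\mu\le\cdots\uparrow\infty$. This gives \eqref{eq:ordered-spectrum}. Compactness of $j_\mu$ follows from the factorization $j_\mu=A_\mu^{-1/2}\circ(A_\mu^{1/2}j_\mu)$. The map $A_\mu^{1/2}j_\mu$ is bounded from $(V_\mu,\|\cdot\|_{\E,\mu})$ into $L^2(\mu)$, because $\|A_\mu^{1/2}u\|^2=\E(u,u)$. The operator $A_\mu^{-1/2}=G_\mu^{1/2}$ is compact. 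Their composition is therefore compact.

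For (iii), fix $h\in L^2(\mu)$. Boundedness follows from Cauchy--Schwarz: $|G_\mu h(x)|\le K_G(\mu)^{1/2}\|h\|_{L^2(\mu)}$. Continuity is where the real work lies, and I expect it to be the main obstacle. Fix $x_0\in D$ and $\rho>0$, and split $G_\mu h(x)-G_\mu h(x_0)$ into the contribution from $y\in B(x_0,2\rho)$ and from its complement. For the near part, use \eqref{eq:green-log}: for $|x-x_0|<\rho$ we have $B(x_0,2\rho)\subset B(x,3\rho)$, so Cauchy--Schwarz bounds it by $2\|h\|\sup_z(\int_{B(z,3\rho)}\ell_z^2\,d\mu)^{1/2}$. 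This tends to $0$ as $\rho\downarrow0$ by \eqref{eq:log-square-local}. For the far part, $G_D(\cdot,y)$ is continuous in $x$ near $x_0$, uniformly for $|y-x_0|\ge2\rho$, because $G_D$ is jointly continuous off the diagonal in $D\times D$. It is also bounded there by $C_D+\log(1/\rho)$. Since $\mu$ is finite, dominated convergence gives continuity of the far part. Finally, every eigenfunction satisfies $u=\Lambda G_\mu u$, and this provides its bounded continuous representative.
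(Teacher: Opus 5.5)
Your proof is correct and follows the paper's argument in all essentials: \eqref{eq:green-HS} and \cref{lem:finite-green-potential} give the Hilbert--Schmidt property, positivity and $A_\mu G_\mu=I$; compactness of $G_\mu$ then gives (ii); and the near/far split using \eqref{eq:log-square-local} gives (iii). Your factorization $j_\mu=A_\mu^{-1/2}(A_\mu^{1/2}j_\mu)$ and your use of dominated convergence on the far region are interchangeable with the paper's $(A_\mu+1)^{-1}=j_\mu j_\mu^*$ polar-decomposition step and its interior gradient estimate. The one fact you leave implicit is that $L^2(\mu)$ is infinite-dimensional, which is needed for the eigenvalue sequence in \eqref{eq:ordered-spectrum} to be infinite; the paper obtains this from $\supp\mu=D$ via normalized indicators of disjoint balls, and atomlessness from \cref{cor:structural-orbit} would serve equally well.
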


\begin{proof}
By \eqref{eq:green-HS}, $G_\mu$ is Hilbert--Schmidt; symmetry of $G_D$ gives
self-adjointness.  Fix a real-valued $h\in L^2(\mu)$ and let $U_\mu h$ be given by
\cref{lem:finite-green-potential}.  The bound defining $K_G(\mu)$ gives
$G_\mu h\in L^2(\mu)$.  Since $\mu$ is smooth,
\eqref{eq:potential-green} also holds $\mu$-almost everywhere, so
$U_\mu h\in V_\mu$.  Restricting \eqref{eq:potential-pairing} to
$v\in V_\mu$ gives
\begin{equation*}
 A_\mu U_\mu h=h.
\end{equation*}
Conversely, if $u\in\Dom(A_\mu)$ is real-valued and
$w:=u-U_\mu(A_\mu u)\in V_\mu$, then the generator identity and
\eqref{eq:potential-pairing} give
\begin{equation*}
 \E(w,v)=0,\qquad v\in V_\mu.
\end{equation*}
Taking $v=w$ yields $w=0$.  Therefore \eqref{eq:green-inverse} holds.  In addition,
\begin{equation*}
 \langle h,G_\mu h\rangle_{L^2(\mu)}
 =\E(U_\mu h,U_\mu h)\ge0,
\end{equation*}
so $G_\mu$ is positive; injectivity follows from $A_\mu G_\mu=I$.
The identities extend to the complexified Hilbert space by complex linearity.
This proves \textnormal{(i)}.

Choose pairwise disjoint non-empty balls $B_j\Subset D$.  Full support gives
$\mu(B_j)>0$, so the normalized indicators
$\mathbf1_{B_j}/\mu(B_j)^{1/2}$ form an infinite orthonormal family in
$L^2(\mu)$.  The compact spectral theorem applied to the compact injective
operator $G_\mu$ therefore gives
\begin{equation*}
 r_1(\mu)\ge r_2(\mu)\ge\cdots>0,
 \qquad r_n(\mu)\downarrow0.
\end{equation*}
By \eqref{eq:green-inverse},
\begin{equation*}
 \Lambda_n^\mu=r_n(\mu)^{-1}\uparrow\infty,
\end{equation*}
and $0$ is not an eigenvalue of $A_\mu$.  Moreover,
\begin{equation*}
 (A_\mu+1)^{-1}=G_\mu(I+G_\mu)^{-1}
\end{equation*}
is compact.  The first representation theorem also gives
\begin{equation*}
 (A_\mu+1)^{-1}=j_\mu j_\mu^*;
\end{equation*}
the polar decomposition then implies that $j_\mu$ is compact.  This proves
\textnormal{(ii)}.

For \textnormal{(iii)}, let $x_j\to x\in D$.  Fix
$0<\rho<\min\{1/4,\dist(x,\partial D)/4\}$ and take $j$ large enough that
$|x_j-x|<\rho$.  Then $B(x,2\rho)\cap D\subset B(x_j,3\rho)$, so
\eqref{eq:green-log} and \cref{lem:log-square} give
\begin{align*}
 &\int_{B(x,2\rho)\cap D}|G_D(x_j,y)-G_D(x,y)|^2\,d\mu(y)\\
 &\qquad\le
 2\int_{B(x,2\rho)\cap D}G_D(x_j,y)^2\,d\mu(y)
 +2\int_{B(x,2\rho)\cap D}G_D(x,y)^2\,d\mu(y)
 \longrightarrow0
\end{align*}
uniformly as $\rho\downarrow0$.  On $D\setminus B(x,2\rho)$, the poles
remain a fixed distance from $y$, and interior gradient estimates for
$z\mapsto G_D(z,y)$ give
\begin{equation*}
 \sup_{y\in D\setminus B(x,2\rho)}
 |G_D(x_j,y)-G_D(x,y)|
 \le C_{x,\rho}|x_j-x|\longrightarrow0.
\end{equation*}
Hence
\begin{equation*}
 \|G_D(x_j,\cdot)-G_D(x,\cdot)\|_{L^2(\mu)}\longrightarrow0.
\end{equation*}
Together with the definition of $K_G(\mu)$, this yields, for
$h\in L^2(\mu)$,
\begin{align*}
 |G_\mu h(x)|
 &\le K_G(\mu)^{1/2}\|h\|_{L^2(\mu)},\\
 |G_\mu h(x_j)-G_\mu h(x)|
 &\le
 \|G_D(x_j,\cdot)-G_D(x,\cdot)\|_{L^2(\mu)}
 \|h\|_{L^2(\mu)}\longrightarrow0.
\end{align*}
This proves \eqref{eq:green-smoothing}.  Finally, if
$A_\mu\phi=\Lambda\phi$, then
$\phi=\Lambda G_\mu\phi\in C_b(D)$.
\end{proof}

For the conditional slicing arguments, the canonical spectral data must also
depend measurably on the coefficient environment.  For a finite positive Radon
measure $\rho$ and $h\in L^2(\rho)$, write $[h]_\rho$ for its equivalence
class.  For $b\in\Omega_*$, $n\ge1$, and $0<a<c$, write
\begin{equation*}
 \Lambda_n^b:=\Lambda_n^{M_b},
 \qquad
 \Pi_{a,c}^b:=\mathbf1_{(a,c)}(A_b),
\end{equation*}
and set $\Lambda_0^b:=0$ on all of $\Omega$.  Extend the two fields above
by zero off $\Omega_*$:
\begin{equation}
 \Lambda_n^b:=0,
 \qquad
 \Pi_{a,c}^b:=0,
 \qquad b\notin\Omega_*.
 \label{eq:spectral-extensions}
\end{equation}
Recall the rational sine core $Q$ from \eqref{eq:finite-mode-cores}.

\begin{proposition}
\label{prop:borel-spectrum}
For $q\in Q$, rational $0<a<c$, and $n,m\ge1$, the maps
\begin{equation}
 b\longmapsto\Lambda_n^b,
 \qquad
 b\longmapsto
 \Tr\!\left[(\Pi_{a,c}^b\Mult_q\Pi_{a,c}^b)^m\right]
 \label{eq:borel-traces}
\end{equation}
are measurable on $(\Omega,\Borel(\Omega))$.
\end{proposition}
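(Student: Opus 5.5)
The plan is to express every quantity in \eqref{eq:borel-traces} through countably many operations on the Hilbert--Schmidt Green operators $G_{M_b}$. These operators act on the varying spaces $L^2(M_b)$. Pulling them back to a fixed space would require a canonical unitary, so we avoid that and instead compute through scalar integrals against $M_b$, which are measurable in $b$ by \cref{prop:chaos}.

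\textbf{Step 1: the eigenvalues.} Since $\Omega_*$ is Borel and both fields vanish off it, it suffices to work on $\Omega_*$. By \cref{prop:green-operator}, $\Lambda_n^b=r_n(M_b)^{-1}$, where the $r_n$ are the eigenvalues of the positive compact operator $G_b:=G_{M_b}$. The power traces
\[
 \Tr(G_b^m)=\int_{D^m}G_D(x_1,x_2)\cdots G_D(x_m,x_1)\,dM_b^{\otimes m},\qquad m\ge2,
\]
are finite because $G_b$ is Hilbert--Schmidt. The function $b\mapsto\int F\,dM_b^{\otimes m}$ is Borel for bounded continuous $F$, since $M_b$ is a measurable measure-valued map. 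A monotone class argument extends this to nonnegative Borel $F$, so these traces are measurable. Using the truncations $G_D\wedge K$ and monotone convergence keeps everything in the nonnegative Borel class.

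From the power sums $p_m=\sum_n r_n^m$ for $m\ge2$ one recovers the ordered eigenvalues measurably. The largest is $r_1=\lim_m p_m^{1/m}$, and its multiplicity is $\lim_m p_m/r_1^m$. Removing these contributions and repeating gives $r_2,r_3,\dots$ as measurable functions of $(p_m)_m$. An alternative route uses the Fredholm determinant $\det_2(I-zG_b)$ with measurable coefficients, but the power-sum recursion is simpler.

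\textbf{Step 2: the trace.} Since $A_b$ has pure point spectrum, $\Pi_{a,c}^b$ is the finite-rank projection onto the eigenvalues in $(a,c)$. With $z$ on a circle $\Gamma$ it equals the contour integral $\frac1{2\pi i}\oint_\Gamma(z-G_b)^{-1}\,dz$ after the spectral map $\lambda\mapsto1/\lambda$. The difficulty is that the endpoints might be eigenvalues. So instead use a polynomial approximation: $\mathbf1_{(1/c,1/a)}$ is a pointwise limit of polynomials $P_j$ on $[0,r_1]$ that are uniformly bounded and vanish at $0$; for instance, take continuous approximants of the open-interval indicator and apply Weierstrass. Then $\Pi_{a,c}^b=\lim_j P_j(G_b)$ strongly.

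Since the rank of $\Pi_{a,c}^b$ is finite, the quantity $\Tr[(\Pi\Mult_q\Pi)^m]$ is the limit of $\Tr[(P_j(G_b)\Mult_qP_j(G_b))^m]$. One needs some care here, because strong convergence alone does not give trace convergence. Write the trace as $\sum_{n:\Lambda_n\in(a,c)}\langle\cdots\rangle$ using the eigenprojections. In the eigenbasis, $P_j(G_b)=\sum_n P_j(r_n)\phi_n\otimes\phi_n$, and $P_j(r_n)\to\mathbf1$ pointwise with $\sum_n|P_j(r_n)|^2$ uniformly bounded. Choosing $P_j$ with $P_j(0)=0$ and $|P_j(r)|\le Cr$ makes $P_j(G_b)$ Hilbert--Schmidt with domination, so dominated convergence in the Hilbert--Schmidt norm gives the limit.

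Each approximant expands into finite sums of iterated integrals of products of $G_D$ and $q$ against $M_b^{\otimes k}$. These are measurable by the monotone class fact above, now applied to signed integrable kernels by splitting into positive and negative parts, with integrability from \eqref{eq:green-HS}. A pointwise limit of measurable maps is measurable.

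\textbf{Main obstacle.} The main obstacle is this trace-limit justification together with the endpoint issue. A cleaner alternative, which I would try first, is to diagonalize: writing $\Pi\Mult_q\Pi$ in an orthonormal eigenbasis gives
\[
\Tr[(\Pi\Mult_q\Pi)^m]=\sum_{n_1,\dots,n_m\in J}\prod_{i=1}^{m}\int q\,\phi_{n_i}\phi_{n_{i+1}}\,dM_b,\qquad n_{m+1}:=n_1,
\]
where $J$ indexes the eigenvalues in $(a,c)$. This is basis-independent, and it reduces the problem to the measurability of the invariant polynomials of the finite matrices $\bigl(\int q\,\phi_n\phi_{n'}\,dM_b\bigr)$ over eigenspace blocks. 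Those are again accessible through the kernel expansions $\sum_n P(r_n)\phi_n(x)\phi_n(y)$.
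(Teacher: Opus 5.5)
Your proposal is correct, but it takes a different route from the paper.

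\textbf{The paper's route.} The paper fixes a countable family $h_j\in C_c^\infty(D)$ whose classes are total in every $L^2(M_b)$. It notes that the matrix entries $\langle[h_i]_{M_b},\widehat G_b[h_j]_{M_b}\rangle$ (limits of truncated-kernel integrals) and $\int h_iqh_j\,dM_b$ are measurable in $b$. It then invokes the measurable-Hilbert-field lemmas of the companion response paper, which supply a measurable compact self-adjoint operator field, its measurable ordered eigenvalues, and the measurable projection and trace fields. All the functional-analytic work is delegated to that machinery.

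\textbf{Your route.} You stay with scalar integrals against $M_b^{\otimes k}$ throughout.
\begin{enumerate}
\item Positivity of $G_D$ and the Hilbert--Schmidt bound make each $\Tr(G_b^m)$, $m\ge2$, a finite nonnegative cyclic integral.
\item The power-sum recursion ($r_1=\lim_m p_m^{1/m}$, multiplicity $\lim_m p_m/r_1^m$, subtract, repeat) recovers the $r_n(b)$ measurably.
\item The polynomial calculus $P_j(G_b)$ with $P_j(0)=0$ and $|P_j(r)|\le Cr$ converges to $\Pi_{a,c}^b$ in Hilbert--Schmidt norm. This upgrades to trace-norm convergence of $(P_j(G_b)\Mult_qP_j(G_b))^m$, whose traces are signed but absolutely integrable cyclic integrals.
\end{enumerate}

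\textbf{Comparison.} Your route is self-contained and more elementary; it relies only on $G_D\ge0$, the bound on $K_G(\mu)$, and measurability of $b\mapsto\int F\,dM_b^{\otimes k}$. The paper's route is shorter given the cited lemmas and yields a full measurable operator field, so any bounded Borel spectral function is covered at once.

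\textbf{One detail to tighten.} The approximating polynomials must not depend on $b$, whereas your interval $[0,r_1]$ does. Approximate on a fixed $[0,R]$ and work on the measurable events $\{r_1(b)<R\}$, $R\in\N$. For example, take continuous $g_j\uparrow\mathbf 1_{(1/c,1/a)}$ supported away from the endpoints, approximate $g_j(r)/r$ (bounded by $c$) uniformly within $1/j$ by a polynomial $\widetilde P_j$, and set $P_j(r)=r\widetilde P_j(r)$. This gives $|P_j(r)|\le(c+1)r$ and also handles the endpoint issue. With this fix, the ``main obstacle'' you flag is already resolved by your Step~2, and the diagonalization alternative is unnecessary.
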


\begin{proof}
Choose a linearly independent sequence
$(h_j)_{j\ge1}\subset C_c^\infty(D)$ whose rational span is uniformly
dense in $C_0(D)$.  Extend the Green field by zero off $\Omega_*$:
\[
 \widehat G_b:=
 \begin{cases}
 G_{M_b},&b\in\Omega_*,\\
 0,&b\notin\Omega_*.
 \end{cases}
\]
For every $i,j$,
\begin{align*}
 &\left\langle[h_i]_{M_b},
 \widehat G_b[h_j]_{M_b}\right\rangle_{L^2(M_b)}\\
 &\qquad=
 \mathbf1_{\Omega_*}(b)
 \lim_{R\to\infty}
 \iint_{D^2}h_i(x)\,(G_D(x,y)\wedge R)\,h_j(y)
 \,dM_b(x)dM_b(y).
\end{align*}
The right-hand side is measurable: $b\mapsto M_b$ is measurable and each
truncated kernel is bounded and measurable.  The measurable-Hilbert-field
construction of \cite[Lemmas~6.19--6.21]{CaiResponseSubmitted} therefore
yields a measurable compact self-adjoint operator field $b\mapsto\widehat G_b$
and measurable eigenvalue maps $b\mapsto r_n(b)$.  On $\Omega_*$,
\begin{equation*}
 \Lambda_n^b=r_n(b)^{-1},
 \qquad
 \Pi_{a,c}^b=\mathbf1_{(c^{-1},a^{-1})}(\widehat G_b).
\end{equation*}
Together with \eqref{eq:spectral-extensions}, this gives measurable
eigenvalue and projection fields on all of $\Omega$.

For $q\in Q$ the scalar maps
\begin{equation*}
 b\longmapsto\int_Dh_iqh_j\,dM_b
\end{equation*}
are measurable.  Applying the same measurable-field lemmas to
$\Pi_{a,c}^b\Mult_q\Pi_{a,c}^b$ and its powers gives
\eqref{eq:borel-traces}.
\end{proof}

\subsection{Conditional coefficient slices}

The finite product density gives an explicit disintegration along every
finite family of coefficient directions.

\begin{proposition}
\label{prop:slices}
Let $f_1,\ldots,f_N\in\mathcal T$ be linearly independent.  There exist a
standard Borel space $Y$, a probability measure $\nu$ on $Y$, a measurable
map
\[
 \Phi:Y\times\R^N\longrightarrow\Omega,
\]
and a probability kernel
\begin{equation*}
 K(\eta,dt)=k(\eta,t)\,dt,
 \qquad k:Y\times\R^N\longrightarrow(0,\infty)
\end{equation*}
with measurable density $k$ such that, for every non-negative measurable
$H:\Omega\to[0,\infty]$,
\begin{equation}
 \int_\Omega H(b)\,\Prob(db)
 =\int_Y\int_{\R^N}H(\Phi(\eta,t))
 K(\eta,dt)\,\nu(d\eta).
 \label{eq:disintegration}
\end{equation}
\end{proposition}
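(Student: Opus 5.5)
The construction is a finite-dimensional linear change of variables followed by Fubini.

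Let $I\subset\N$ be a finite set, say $I=\{1,\ldots,d\}$ after enlarging, such that every $f_i$ is a combination of $(e_k)_{k\in I}$. Write $b=(u,y)$ with $u\in\R^d$ and $y\in\R^{\N\setminus I}$. Then $\Prob=\mathsf p^{\otimes d}\otimes\mathsf p^{\otimes(\N\setminus I)}$, and $u$ has the everywhere positive density
\[
 P(u)=\prod_{k\le d}p(u_k).
\]
The vectors $v_i:=\delta(f_i)\in\R^d$ are linearly independent, because $\delta$ is injective on $\Span\{e_k:k\in I\}$: it just rescales the coefficients by $(2\pi)^{-1/2}\lambda_k^{1/2}>0$.

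Complete $v_1,\ldots,v_N$ to a basis $v_1,\ldots,v_d$ of $\R^d$ and let $B$ be the invertible matrix with columns $v_i$. Write $u=Bw$ with $w=(t,s)\in\R^N\times\R^{d-N}$, so that $u=\sum_{i\le N}t_iv_i+\sum_{i>N}s_iv_i$. The change of variables gives
\[
 \int H\,d\Prob=\int\!\!\int\!\!\int H(B(t,s),y)\,P(B(t,s))\,|\det B|\,dt\,ds\,\mathsf p^{\otimes(\N\setminus I)}(dy).
\]
Set
\[
 m(s):=\int_{\R^N}P(B(t,s))\,|\det B|\,dt .
\]
By Fubini, $m$ is a measurable density on $\R^{d-N}$ that integrates to one. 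It is strictly positive everywhere because the integrand is. It is also finite almost everywhere; on the null set where $m=\infty$, redefine the kernel below to be some fixed positive probability density, for instance the Gaussian density.

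Now take $Y:=\R^{d-N}\times\R^{\N\setminus I}$, which is standard Borel as a countable product of Polish spaces. Define
\[
 \nu(ds\,dy):=m(s)\,ds\otimes\mathsf p^{\otimes(\N\setminus I)}(dy),\qquad
 \Phi((s,y),t):=(B(t,s),y),\qquad
 k((s,y),t):=\frac{P(B(t,s))\,|\det B|}{m(s)} .
\]
The map $\Phi$ is continuous, hence measurable, and $k$ is measurable, strictly positive, and integrates to one in $t$. The displayed Fubini identity then rearranges exactly into \eqref{eq:disintegration} for non-negative $H$, by Tonelli.

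By construction, $\Phi(\eta,t)=\Phi(\eta,0)+\sum_i t_i\delta(f_i)$. This is the orbit structure that Corollary~\ref{cor:canonical-orbit} will later use along each slice, although the statement itself does not require it.

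The main obstacle is only bookkeeping: keeping $k$ strictly positive and measurable, and handling the $m(s)=\infty$ null set. If $N=d$, the $s$ variable is absent and $m$ is simply the constant $1$.
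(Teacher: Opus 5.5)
Your proposal is correct and is essentially the paper's proof. Both use a linear change of variables on the finitely many active coordinates along $\delta(f_1),\ldots,\delta(f_N)$, normalize the fiber integral into a strictly positive density, and apply Fubini with the independent tail coordinates.

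The differences are cosmetic. The paper parametrizes the complementary directions by $W^\perp$ with $\mathcal H^{d-N}$ and the Gram factor $J_{\rm vol}$ rather than by an oblique basis with $|\det B|$. It also discards the null set where the normalizer is infinite by shrinking $Y$, instead of redefining the kernel there.
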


\begin{proof}
Choose $d$ so that each
\[
 d_j:=\delta(f_j)
\]
is supported in the first $d$ coefficient coordinates.  The vectors
$d_1,\ldots,d_N$ are linearly independent because $\delta$ is injective on
$\mathcal T$.  Put
\[
 W:=\Span\{d_1,\ldots,d_N\}\subset\R^d,
 \qquad
 v_t:=\sum_{j=1}^Nt_jd_j,
\]
and
\begin{equation*}
 J_{\rm vol}:=\left[\det(d_i\cdot d_j)_{i,j=1}^N\right]^{1/2}>0.
\end{equation*}
Write $b=(x,\widehat b)$ with $x\in\R^d$, let $\Prob_{\rm tail}$ be the
product law of the remaining coordinates, and decompose uniquely
\[
 x=y+v_t,
 \qquad y\in W^\perp.
\]
With
\[
 p_d(x):=\prod_{k=1}^dp(x_k),
\]
the linear change of variables gives, for every non-negative measurable
$\varphi:\R^d\to[0,\infty]$,
\begin{equation}
 \int_{\R^d}\varphi(x)p_d(x)\,dx
 =\int_{W^\perp}\int_{\R^N}
 \varphi(y+v_t)p_d(y+v_t)J_{\rm vol}\,dt\,
 d\mathcal H^{d-N}(y).
 \label{eq:finite-block-change}
\end{equation}
Set
\begin{equation*}
 Z(y):=\int_{\R^N}p_d(y+v_t)J_{\rm vol}\,dt.
\end{equation*}
Then $Z$ is measurable and positive, while \eqref{eq:finite-block-change}
with $\varphi=1$ gives
\[
 \int_{W^\perp}Z(y)\,d\mathcal H^{d-N}(y)=1.
\]
Hence $Z(y)<\infty$ for $\mathcal H^{d-N}$-almost every $y$.  Define
\[
 Y:=\{(y,\widehat b):0<Z(y)<\infty\}
 \subset W^\perp\times\R^{\N\setminus\{1,\ldots,d\}},
\]
write $\eta=(y,\widehat b)$, and set
\begin{align*}
 \nu(d\eta)
 &:=Z(y)\,d\mathcal H^{d-N}(y)\,
 \Prob_{\rm tail}(d\widehat b),\\
 \Phi(\eta,t)
 &:=(y+v_t,\widehat b),\\
 k(\eta,t)
 &:=\frac{p_d(y+v_t)J_{\rm vol}}{Z(y)}.
\end{align*}
The space $Y$ is standard Borel, $\nu$ is a probability measure, and
$\Phi$ and $k$ are measurable.  Moreover,
\[
 \int_{\R^N}k(\eta,t)\,dt=1,
 \qquad
 k(\eta,t)>0,
\]
so $K(\eta,dt):=k(\eta,t)\,dt$ is a probability kernel.  Formula
\eqref{eq:disintegration} now follows from \eqref{eq:finite-block-change}
after adjoining the independent tail coordinates.
\end{proof}

For the data constructed above,
\begin{equation}
 \Phi(\eta,t)
 =\Phi(\eta,s)
 +\delta\!\left(\sum_{j=1}^N(t_j-s_j)f_j\right),
 \qquad \eta\in Y,\quad s,t\in\R^N.
 \label{eq:fiber-translation}
\end{equation}
In particular, $K(\eta,\cdot)$ is equivalent to Lebesgue measure on
$\R^N$.  Put
\begin{equation*}
 \mathcal G_\eta:=\{t\in\R^N:\Phi(\eta,t)\in\Omega_*\}.
\end{equation*}
Then $\mathcal G_\eta\in\Borel(\R^N)$ for every $\eta\in Y$.

\begin{corollary}
\label{cor:coherent-slices}
For $\nu$-almost every $\eta$,
\begin{equation*}
 K(\eta,\mathcal G_\eta)=1.
\end{equation*}
For any such $\eta$ and any $s\in \mathcal G_\eta$, define
\begin{equation}
 d\widetilde M_{\eta,t}
 =\exp\!\left(\gamma\sum_{j=1}^N(t_j-s_j)f_j\right)
 dM_{\Phi(\eta,s)},
 \qquad t\in\R^N.
 \label{eq:coherent-family}
\end{equation}
Then
\begin{equation}
 \widetilde M_{\eta,t}=M_{\Phi(\eta,t)}
 \qquad(t\in \mathcal G_\eta),
 \label{eq:coherent-agreement}
\end{equation}
and $\widetilde M_{\eta,t}\in\mathcal M_*$ for every $t\in\R^N$.
\end{corollary}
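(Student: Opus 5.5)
The argument has three parts: the full-measure statement comes from the disintegration, agreement on the good set comes from \cref{cor:canonical-orbit}, and membership in $\mathcal M_*$ follows from the definition \eqref{eq:orbit-class}.

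\emph{Full measure.} Apply \eqref{eq:disintegration} with $H=\mathbf 1_{\Omega\setminus\Omega_*}$. This is measurable because $\Omega_*\in\Borel(\Omega)$, and \cref{prop:structural} gives $\Prob(\Omega_*)=1$, so
\[
 0=\int_Y K(\eta,\R^N\setminus\mathcal G_\eta)\,\nu(d\eta).
\]
The integrand is non-negative and measurable in $\eta$. Measurability holds because $(\eta,t)\mapsto\mathbf 1_{\Omega_*}(\Phi(\eta,t))k(\eta,t)$ is jointly measurable, so Tonelli applies. Hence $K(\eta,\mathcal G_\eta)=1$ for $\nu$-a.e.\ $\eta$. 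For such $\eta$, the set $\mathcal G_\eta$ is non-empty, so an $s$ as in the statement exists.

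\emph{Agreement.} Fix such an $\eta$, some $s\in\mathcal G_\eta$, and some $t\in\mathcal G_\eta$. Put $f:=\sum_j(t_j-s_j)f_j\in\mathcal T$. By \eqref{eq:fiber-translation}, $\Phi(\eta,t)=\Phi(\eta,s)+\delta(f)$; this uses linearity of $\delta$ on $\mathcal T$, which is built into \eqref{eq:fiber-translation}. Both $\Phi(\eta,s)$ and $\Phi(\eta,t)$ lie in $\Omega_*\subset\Omega_{\rm can}$. So \cref{cor:canonical-orbit}, applied with $b=\Phi(\eta,s)$, translation parameter $1$ and function $f$, gives $M_{\Phi(\eta,t)}=e^{\gamma f}M_{\Phi(\eta,s)}$. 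This is exactly $\widetilde M_{\eta,t}$ as defined in \eqref{eq:coherent-family}, which proves \eqref{eq:coherent-agreement}.

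\emph{Membership in $\mathcal M_*$.} For arbitrary $t\in\R^N$, the function $\sum_j(t_j-s_j)f_j$ lies in $\mathcal T$ and $\Phi(\eta,s)\in\Omega_*$. Hence $\widetilde M_{\eta,t}=e^{\gamma f}M_b$ with $b\in\Omega_*$ and $f\in\mathcal T$, which belongs to $\mathcal M_*$ by \eqref{eq:orbit-class}.

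The only point needing care is the measurability of $\eta\mapsto K(\eta,\R^N\setminus\mathcal G_\eta)$ used in the Tonelli step. It follows from the measurability of $\Phi$ and $k$ asserted in \cref{prop:slices}, together with $\Omega_*$ being Borel rather than merely universally measurable.
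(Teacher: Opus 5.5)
Your proposal is correct and follows the paper's proof essentially step for step. You apply the disintegration to the indicator of $\Omega\setminus\Omega_*$ rather than of $\Omega_*$, which is equivalent, then use \eqref{eq:fiber-translation} with \cref{cor:canonical-orbit} for agreement on $\mathcal G_\eta$ and the definition of $\mathcal M_*$ for membership; your remarks on Tonelli measurability and the non-emptiness of $\mathcal G_\eta$ fill in details the paper leaves implicit.
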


\begin{proof}
Apply \eqref{eq:disintegration} to $H=\mathbf1_{\Omega_*}$.  Since
$\Prob(\Omega_*)=1$,
\[
 1=\int_Y K(\eta,\mathcal G_\eta)\,\nu(d\eta),
\]
and therefore $K(\eta,\mathcal G_\eta)=1$ for $\nu$-almost every $\eta$.  Fix such
an $\eta$ and $s\in \mathcal G_\eta$.  For $t\in \mathcal G_\eta$,
\eqref{eq:fiber-translation} and \cref{cor:canonical-orbit} give
\eqref{eq:coherent-agreement}.  For arbitrary $t$, the measure in
\eqref{eq:coherent-family} is a bounded tilt of
$M_{\Phi(\eta,s)}$ with $\Phi(\eta,s)\in\Omega_*$, and hence belongs to
$\mathcal M_*$.
\end{proof}

The countable core $Q$ will be used below to convert response identities into
identities of finite signed Radon measures.

\begin{lemma}
For every finite signed Radon measure $\sigma$ on $D$,
\begin{equation}
 \int_Dq\,d\sigma=0\quad(q\in Q)
 \qquad\Longrightarrow\qquad
 \sigma=0.
 \label{eq:Q-separates}
\end{equation}
\end{lemma}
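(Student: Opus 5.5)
The plan is to pass from the countable rational core $Q$ to all of $C_c(D)$ by uniform approximation, and then conclude with the Riesz uniqueness theorem.

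First, $\int_D q\,d\sigma=0$ for every $q\in Q$ extends by $\R$-linearity and rational approximation of real coefficients to every $f\in\mathcal T$. Here $\sigma$ is finite, so $|\int f\,d\sigma|\le\|f\|_\infty|\sigma|(D)$, and finitely many coefficients can be approximated simultaneously.

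Next, we show that every $\varphi\in C_c^\infty(D)$ is a uniform limit of elements of $\mathcal T$. Extend $\varphi$ by zero to $(0,1)^2$, and then oddly in each variable to a $2$-periodic smooth function on $\R^2$. Smoothness holds because $\varphi$ vanishes near $\partial D$. Its double Fourier series on the period cell consists only of the products $\sin(m\pi x_1)\sin(\ell\pi x_2)$, that is, it is the sine expansion $\sum_{m,\ell}c_{m,\ell}e_{m,\ell}$. Integrating by parts repeatedly shows the coefficients decay faster than any power of $m^2+\ell^2$. The series therefore converges absolutely and uniformly on $\overline D$, and its partial sums lie in $\mathcal T$.

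By the bound $|\int f\,d\sigma|\le\|f\|_\infty|\sigma|(D)$ again, we get $\int\varphi\,d\sigma=0$ for all $\varphi\in C_c^\infty(D)$. Mollification approximates any $g\in C_c(D)$ uniformly by elements of $C_c^\infty(D)$ supported in a fixed compact subset of $D$. Hence $\int g\,d\sigma=0$ for every $g\in C_c(D)$.

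Finally, write $\sigma=\sigma^+-\sigma^-$ by the Jordan decomposition. The two finite positive Radon measures $\sigma^\pm$ on the open set $D$ agree on $C_c(D)$, so uniqueness in the Riesz representation theorem (equivalently, inner regularity on open sets followed by a monotone class argument) gives $\sigma^+=\sigma^-$, that is, $\sigma=0$.

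The only step needing care is the uniform sine-series approximation. If one prefers to avoid the odd-reflection Fourier argument, an alternative is to apply the Stone–Weierstrass theorem to the algebra generated by the $e_k$ on a compact square $K\Subset D$. Products of sines reduce to linear combinations of cosines rather than sines, however, so the reflection argument is the cleaner route, and it is the one I would use.
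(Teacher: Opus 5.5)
Your proposal is correct and follows essentially the same route as the paper: odd $2$-periodic reflection, uniform approximation by finite sine sums in $\mathcal T$, rational approximation of coefficients, and uniqueness in the Riesz representation theorem. The only difference is minor: the paper applies rectangular Fej\'er sums directly to $F\in C_0(D)$, whereas you use the absolutely convergent sine series of $\varphi\in C_c^\infty(D)$ and then mollify to reach all of $C_c(D)$.
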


\begin{proof}
Let $F\in C_0(D)$ and extend it continuously by zero to $\partial D$.
Its separately odd,
$2$-periodic extension $F^{\rm odd}$ to $\R^2$ satisfies
\[
 F^{\rm odd}(-x_1,x_2)=-F^{\rm odd}(x_1,x_2),
 \qquad
 F^{\rm odd}(x_1,-x_2)=-F^{\rm odd}(x_1,x_2).
\]
Rectangular Fej\'er sums converge uniformly to $F^{\rm odd}$ and, by the
two odd symmetries, restrict on $D$ to finite sums
\[
 T_m(x)=\sum_{r,\ell\le m}c_{r,\ell}^{(m)}
 \sin(r\pi x_1)\sin(\ell\pi x_2)\in\mathcal T,
 \qquad
 \|T_m-F\|_\infty\longrightarrow0.
\]
Approximating the finitely many coefficients by rationals gives
$q_m\in Q$ with $\|q_m-F\|_\infty\to0$.  If $\sigma$ annihilates $Q$,
then
\[
 \left|\int_DF\,d\sigma\right|
 \le\|F-q_m\|_\infty|\sigma|(D)\longrightarrow0.
\]
Thus $\sigma$ annihilates $C_0(D)$ and is zero by the Riesz representation
theorem.
\end{proof}

\section{Pathwise spectral response and transversality}

We study the spectral response under finite-dimensional exponential tilts of a
measure in $\mathcal M_*$.  The resulting first-order formulas will be used
below to obtain splitting and transversality.  Throughout this section, we fix
$\mu\in\mathcal M_*$.

\subsection{Finite-dimensional spectral response}

\begin{lemma}
\label{lem:multipliers}
For $f\in\mathcal T$, let $\Mult_fu:=fu$.  We have
\begin{equation}
 \Mult_f(V_\mu)\subset V_\mu,
 \qquad
 \|\Mult_fu\|_{\E,\mu}
 \le C_f\|u\|_{\E,\mu},
 \qquad u\in V_\mu.
 \label{eq:multiplier-bound}
\end{equation}
For $f_1,\ldots,f_N\in\mathcal T$ and $z\in\C^N$, set
\[
 m_z(x):=\exp\!\left(-\frac{\gamma}{2}\sum_{j=1}^Nz_jf_j(x)\right),
 \qquad
 \Mult_{m_z}u:=m_zu.
\]
We have
\[
 \Mult_{m_z}\in\mathcal B(V_\mu^\C),
 \qquad
 \Mult_{m_z}^{-1}=\Mult_{m_{-z}}.
\]
Moreover,
\[
 \C^N\ni z\longmapsto\Mult_{m_z}\in\mathcal B(V_\mu^\C)
\]
is holomorphic.
\end{lemma}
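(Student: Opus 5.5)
The argument is elementary: every $f\in\mathcal T$ is a finite trigonometric sum, hence $f\in C^\infty(\overline D)$ with $f$ and $\nabla f$ bounded, and the multipliers act on both pieces of the norm $\|\cdot\|_{\E,\mu}$ in a transparent way. The plan is to first prove \eqref{eq:multiplier-bound} for a general bounded Lipschitz multiplier, then specialize to $m_z$, and finally obtain holomorphy from a power series.

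\emph{Step 1 (real multipliers).} Let $\phi\in C^1(\overline D)$ (complex-valued allowed) with $\phi,\nabla\phi$ bounded, and let $u\in V_\mu$. The product $\phi u$ lies in $H_0^1(D)$, with
\[
 \nabla(\phi u)=\phi\nabla u+u\nabla\phi .
\]
Poincaré's inequality on $D$ bounds $\|u\|_{L^2(dx)}$ by $C\,\E(u,u)^{1/2}$, so
\[
 \E(\phi u,\phi u)\le C\bigl(\|\phi\|_\infty^2+\|\nabla\phi\|_\infty^2\bigr)\E(u,u).
\]
For the measure part we need the quasi-continuous representative of $\phi u$, and the claim is $\widetilde{\phi u}=\phi\,\widetilde u$ quasi-everywhere. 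This holds because $\phi\widetilde u$ is quasi-continuous, $\phi$ being continuous on $\overline D$, and it agrees with $\phi u$ almost everywhere; uniqueness of quasi-continuous versions then applies, after approximating $u$ by $C_c^\infty$ functions and passing to a quasi-uniformly convergent subsequence. Since $\mu$ charges no polar set by \cref{cor:structural-orbit}, this identity holds $\mu$-a.e., and therefore $\|\phi u\|_{L^2(\mu)}\le\|\phi\|_\infty\|u\|_{L^2(\mu)}$. Consequently $\phi u\in V_\mu$ and
\[
 \|\Mult_\phi u\|_{\E,\mu}\le C_\phi\|u\|_{\E,\mu},
 \qquad
 C_\phi\lesssim\|\phi\|_\infty+\|\nabla\phi\|_\infty .
\]
Taking $\phi=f$ gives \eqref{eq:multiplier-bound}.

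\emph{Step 2 (exponential multipliers).} Set $g_z:=-\tfrac\gamma2\sum_jz_jf_j$, so that $m_z=e^{g_z}$. This is smooth on $\overline D$ with $|m_z|\le e^{|\mathrm{Re}\,g_z|}$ and $\nabla m_z=m_z\nabla g_z$. Applying Step 1 to the complexified space $V_\mu^\C$, where it works componentwise or directly for complex $\phi$, gives $\Mult_{m_z}\in\mathcal B(V_\mu^\C)$. The pointwise identity $m_zm_{-z}=1$ yields $\Mult_{m_z}\Mult_{m_{-z}}=I=\Mult_{m_{-z}}\Mult_{m_z}$, which is the inverse formula.

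\emph{Step 3 (holomorphy).} This is the only step with any real content, and it is also routine. Write
\[
 \Mult_{m_z}=\sum_{k\ge0}\frac1{k!}\Mult_{g_z}^k,
 \qquad
 \Mult_{g_z}=-\frac\gamma2\sum_jz_j\Mult_{f_j}.
\]
The operator $\Mult_{g_z}$ is linear in $z$, with each $\Mult_{f_j}\in\mathcal B(V_\mu^\C)$ by Step 1, so $\|\Mult_{g_z}\|\le C|z|$. The series therefore converges absolutely in operator norm, locally uniformly in $z$, and its sum is an entire $\mathcal B(V_\mu^\C)$-valued function.

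It remains to identify the sum with $\Mult_{m_z}$. The partial sums are multiplication by the Taylor polynomials of $e^{g_z}$. These converge to $m_z$ uniformly on $\overline D$, together with their gradients, so by the bound from Step 1 they also converge to $\Mult_{m_z}$ in operator norm. This establishes the asserted holomorphy.
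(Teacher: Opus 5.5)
Your proof is correct and follows essentially the same route as the paper: the product rule plus the Dirichlet Poincar\'e inequality for the energy part, the sup bound for the $L^2(\mu)$ part, the identity $m_zm_{-z}=1$ for the inverse, and the operator-norm convergent series $\exp\bigl(-\tfrac{\gamma}{2}\sum_j z_j\Mult_{f_j}\bigr)$ for holomorphy. Your two extra checks, that $\widetilde{fu}=f\widetilde u$ quasi-everywhere (hence $\mu$-a.e.) and that the series sum coincides with $\Mult_{m_z}$, are steps the paper uses implicitly, so they make the argument more complete without changing it.
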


\begin{proof}
Since $f$ is a finite sine polynomial, $f\in C^\infty(\overline D)$.
Its $C^1$ norm is finite.  For
$u\in H_0^1(D)$,
\[
 \nabla(fu)=f\nabla u+u\nabla f,
\]
and therefore
\begin{align*}
 \E(fu,fu)
 &\le
 \frac1\pi\|f\|_\infty^2\int_D|\nabla u|^2\,dx
 +\frac1\pi\|\nabla f\|_\infty^2\int_D|u|^2\,dx\\
 &\le C_{D,f}\E(u,u),
\end{align*}
where the last step uses the Dirichlet Poincar\'e inequality.  Also
\[
 \|fu\|_{L^2(\mu)}
 \le\|f\|_\infty\|u\|_{L^2(\mu)}.
\]
This proves \eqref{eq:multiplier-bound}.  Hence each
$\Mult_{f_j}$ extends boundedly to $V_\mu^\C$, and
\begin{equation*}
 \Mult_{m_z}
 =\exp\!\left(-\frac{\gamma}{2}\sum_{j=1}^Nz_j\Mult_{f_j}\right)
 =\sum_{r=0}^\infty\frac1{r!}
 \left(-\frac{\gamma}{2}\sum_{j=1}^Nz_j\Mult_{f_j}\right)^r.
\end{equation*}
The series converges locally uniformly in $z$ in operator norm, so
$z\mapsto\Mult_{m_z}$ is holomorphic.  Finally, $m_zm_{-z}=1$, so
$\Mult_{m_z}^{-1}=\Mult_{m_{-z}}$.
\end{proof}

For a positive eigenvalue $\Lambda$ of $A_\mu$, write
\[
 E_\mu(\Lambda):=\ker(A_\mu-\Lambda),
 \qquad
 P_\Lambda^\mu:=\text{the }L^2(\mu)\text{-orthogonal projection onto }
 E_\mu(\Lambda).
\]
For $f\in\mathcal T$ define
\begin{equation*}
 C_f^\Lambda(\mu):=
 P_\Lambda^\mu\Mult_f|_{E_\mu(\Lambda)},
 \qquad
 B_f^\Lambda(\mu):=-\gamma\Lambda C_f^\Lambda(\mu).
\end{equation*}

\begin{proposition}
\label{prop:response}
Let $f_1,\ldots,f_N\in\mathcal T$ and
\[
 d\mu_t=e^{\gamma\sum_{j=1}^Nt_jf_j}\,d\mu,
 \qquad t\in\R^N.
\]
Write $\mathbf e_j$ for the $j$th coordinate vector of $\R^N$.  Then:
\begin{enumerate}[label=\textnormal{(\roman*)}]
\item For every $n\ge1$,
\[
 \R^N\ni t\longmapsto\Lambda_n^{\mu_t}
\]
is continuous.
\item For $t_0\in\R^N$, let $\Lambda$ be an eigenvalue of
$A_{\mu_{t_0}}$ of multiplicity $r$.  For each $1\le j\le N$, there are
real-analytic branches $\lambda_1^{(j)},\ldots,\lambda_r^{(j)}$ of
$A_{\mu_{t_0+s\mathbf e_j}}$ through $\Lambda$ such that
\begin{equation*}
 \lambda_a^{(j)}(0)=\Lambda,
 \qquad 1\le a\le r.
\end{equation*}
The $r$ numbers $(\lambda_a^{(j)})'(0)$ are precisely the eigenvalues of
$B_{f_j}^\Lambda(\mu_{t_0})$, each repeated according to its multiplicity.
If
$\phi_1,\ldots,\phi_r$ is a real $L^2(\mu_{t_0})$-orthonormal basis of
$E_{\mu_{t_0}}(\Lambda)$, then
\begin{equation}
 \left\langle
 B_{f_j}^\Lambda(\mu_{t_0})\phi_a,\phi_c
 \right\rangle_{L^2(\mu_{t_0})}
 =-\gamma\Lambda\int_D f_j\phi_a\phi_c\,d\mu_{t_0},
 \qquad 1\le a,c\le r.
 \label{eq:cluster-response}
\end{equation}
\end{enumerate}
\end{proposition}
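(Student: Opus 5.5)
The plan is to transport every tilted operator to the fixed Hilbert space $L^2(\mu)$ by a unitary multiplication, so that the family becomes a holomorphic family of type (B) with a common form domain. Then Kato's analytic perturbation theory gives continuity and analytic eigenvalue branches, and the derivative is computed from the form.

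First, since each $f_j$ is bounded, $\mu_t$ and $\mu$ have the same null sets and $V_{\mu_t}=V_\mu$ as sets, with equivalent norms. Define
\[
 U_t:L^2(\mu_t)\to L^2(\mu),\qquad U_tu:=m_{-t}u=e^{\frac\gamma2\sum_jt_jf_j}u .
\]
This map is unitary. By \cref{lem:multipliers}, it maps $V_{\mu_t}=V_\mu$ onto itself boundedly. The operator $U_tA_{\mu_t}U_t^{-1}$ is associated with the closed form
\[
 \mathfrak e_t(w,w):=\E(m_tw,m_tw),\qquad w\in V_\mu,
\]
on $L^2(\mu)$, and it is unitarily equivalent to $A_{\mu_t}$, so it has the same spectrum. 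For complex $z$ we set $\mathfrak e_z(w,v):=\E(m_zw,\overline{m_{\bar z}}v)$, that is, $\E(m_zw,\cdot)$ paired with $m_{\bar z}$ applied to $\bar v$. This is a sesquilinear form on the fixed domain $V_\mu^\C$, holomorphic in $z$ by \cref{lem:multipliers}. For $z$ near a real point it is sectorial: $\mathfrak e_z-\mathfrak e_{t_0}$ is small relative to $\|\cdot\|_{\E,\mu}^2$, and $\|\cdot\|_{\E,\mu}$ is equivalent to the form norm of $\mathfrak e_{t_0}$ plus the $L^2(\mu)$ norm. So we obtain a self-adjoint holomorphic family of type (B) in each variable, with compact resolvent by \cref{prop:green-operator}(ii).

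Part (i) then follows from generalized norm-resolvent continuity. The forms satisfy $|\mathfrak e_t(w,w)-\mathfrak e_{t'}(w,w)|\le\varepsilon(t,t')\|w\|_{\E,\mu}^2$ with $\varepsilon\to0$. Equivalently, the min–max values in the Rayleigh quotient $\mathfrak e_t(w,w)/\|w\|^2_{L^2(\mu)}$ change continuously, because numerator and denominator change by uniformly controlled relative factors. A cleaner route avoids the unitary entirely:
\[
 \Lambda_n^{\mu_t}=\min_{\dim F=n}\max_{u\in F}\frac{\E(u,u)}{\int|u|^2\,d\mu_t},
\]
and $e^{-\gamma|t-t'|\max_j\|f_j\|_\infty}\le d\mu_t/d\mu_{t'}\le e^{\gamma|t-t'|\max_j\|f_j\|_\infty}$. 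This gives the two-sided bound $\Lambda_n^{\mu_{t'}}e^{-c|t-t'|}\le\Lambda_n^{\mu_t}\le\Lambda_n^{\mu_{t'}}e^{c|t-t'|}$, hence Lipschitz continuity.

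For (ii), restrict to the one-parameter line $s\mapsto t_0+s\mathbf e_j$. Kato–Rellich for self-adjoint type (B) families with compact resolvent (Kato VII.4) gives $r$ real-analytic eigenvalue branches through the isolated eigenvalue $\Lambda$ of multiplicity $r$, together with analytic eigenprojections. To compute the derivatives I would work directly with the untransformed generalized eigenproblem:
\[
 \E(u_s,v)=\lambda(s)\int u_s\,v\,e^{\gamma sf_j}\,d\mu_{t_0}\qquad\text{for all } v\in V_\mu,
\]
with $u_s=U_s^{-1}w_s$ analytic in $V_\mu$. Differentiate at $s=0$ and test with $v=\phi_c\in E_{\mu_{t_0}}(\Lambda)$. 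The terms containing $\dot u$ cancel by symmetry, because $\E(\dot u,\phi_c)=\Lambda\langle\dot u,\phi_c\rangle$. What remains is $0=\dot\lambda\langle u_0,\phi_c\rangle+\Lambda\gamma\int f_ju_0\phi_c\,d\mu_{t_0}$. Writing $u_0=\sum_a\alpha_a\phi_a$, this says that $\dot\lambda$ is an eigenvalue of the matrix $-\gamma\Lambda\bigl(\int f_j\phi_a\phi_c\,d\mu_{t_0}\bigr)_{a,c}$, which is $B^\Lambda_{f_j}$, and this yields \eqref{eq:cluster-response}. To get the "with multiplicity" claim, rather than one derivative per vector, I would use the reduction process. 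The first-order term of the analytically reduced operator $P(s)(\cdot)P(s)$ on the $r$-dimensional range is, after transport, unitarily equivalent to $B^\Lambda_{f_j}(\mu_{t_0})$. The derivatives of the $r$ branches are then the eigenvalues of this first-order coefficient, counted with multiplicity (Kato II.2.2 applied to the reduced finite-dimensional family).

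The main obstacle is making the type (B) framework rigorous. One must identify the closed form $\mathfrak e_s$, check the sectoriality and holomorphy conditions on the common domain $V_\mu^\C$, and justify that the quasi-continuous representatives commute with multiplication by the smooth functions $m_z$. Once this is settled, the perturbation theory is standard.
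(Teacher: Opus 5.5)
Your proposal is correct. For part (ii) it follows the paper's argument: the same unitary $U_t$ and the same common-domain form $\E(m_tw,m_tw)$ on $V_\mu$, giving a holomorphic type (a) family of forms (type (B) operators) with compact resolvent. The branch derivatives are then identified with the first-order term compressed to $E_{\mu_{t_0}}(\Lambda)$. The paper delegates the perturbation-theoretic steps to its companion response paper and computes $\partial_{t_j}a_t$ on the eigenspace. You cite Kato and differentiate the untransformed eigen-equation instead; the two are equivalent.

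The real difference is in part (i). The paper deduces continuity from the holomorphic family, via a lemma of the companion paper. Your min--max comparison needs only three things: $V_{\mu_t}=V_\mu$, injectivity of $u\mapsto[\widetilde u]_{\mu_t}$ (full quasi-support, \cref{prop:structural}\textnormal{(ii)}), and the two-sided density bound. It is elementary, uses no analytic perturbation theory, and gives the quantitative estimate $|\log\Lambda_n^{\mu_t}-\log\Lambda_n^{\mu_{t'}}|\le c|t-t'|$. The paper's route instead gets (i) for free from machinery it needs anyway for (ii) and for the later projection-stability arguments.

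Two small corrections.

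First, your complexified form is mis-conjugated. Since the $f_j$ are real, $\overline{m_{\bar z}}=m_z$, so $\E(m_zw,\overline{m_{\bar z}}v)=\E(m_zw,m_zv)$. That is not holomorphic in $z$, because $\E$ is antilinear in its second slot. The correct extension, which is the paper's $a_z$, is $\E(m_zw,m_{\bar z}v)$: the bilinear pairing of $\nabla(m_zw)$ with $\nabla(m_z\bar v)$.

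Second, you do not need the reduction process for the multiplicity claim. Rellich's theorem for self-adjoint holomorphic families with compact resolvent (Kato, Theorem VII.3.9) gives analytic eigenvector branches $u_a(s)$, $1\le a\le r$, whose values at $s=0$ form an orthonormal basis of $E_{\mu_{t_0}}(\Lambda)$. Your differentiated identity then says that their coordinate vectors form an orthonormal eigenbasis of the symmetric matrix in \eqref{eq:cluster-response}, with eigenvalues $\dot\lambda_a$. That is already the statement ``each repeated according to its multiplicity''.
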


\begin{proof}
Fix $t_0\in\R^N$.  Since $\mu_{t_0}\in\mathcal M_*$, replacing $\mu$ by
$\mu_{t_0}$ and translating the parameter reduces the local assertions to
$t_0=0$.  Since $\sum_jt_jf_j$ is bounded,
\[
 V_{\mu_t}=V_\mu,
 \qquad
 \|\cdot\|_{\E,\mu_t}\asymp\|\cdot\|_{\E,\mu}
\]
locally uniformly in $t$.  The unitary map
\[
 U_t:L^2(\mu_t)\to L^2(\mu),
 \qquad
 U_tu=e^{\gamma\sum_jt_jf_j/2}u,
\]
transports $A_{\mu_t}$ to the common-domain form
\begin{equation*}
 a_t(u,v)=\E(m_tu,m_tv),
 \qquad \Dom a_t=V_\mu,
\end{equation*}
with $m_t$ as in \cref{lem:multipliers}.  We use complexified inner
products and forms that are linear in the first variable.  On the complexified
form space set
\begin{equation*}
 a_z(u,v):=\E(m_zu,m_{\overline z}v),
 \qquad \Dom a_z=V_\mu^\C.
\end{equation*}
By \cref{lem:multipliers,prop:green-operator} and
\cite[Propositions~2.2 and 2.5]{CaiResponseSubmitted}, $(a_z)$ is a
holomorphic type~(a) family with compact resolvent.  Hence
\cite[Lemma~2.6]{CaiResponseSubmitted} gives part~(i).

For the $j$th coordinate direction,
\begin{equation}
 \partial_{t_j}a_t(u,v)\big|_{t=0}
 =-\frac\gamma2\bigl[\E(f_ju,v)+\E(u,f_jv)\bigr].
 \label{eq:form-derivative}
\end{equation}
If $u,v\in E_\mu(\Lambda)$, then
\begin{equation*}
 \partial_{t_j}a_t(u,v)\big|_{t=0}
 =-\gamma\Lambda\int_Df_juv\,d\mu
 =\left\langle B_{f_j}^\Lambda(\mu)u,v\right\rangle_{L^2(\mu)}.
\end{equation*}
Thus \eqref{eq:cluster-response} holds, and
\cite[Proposition~3.2]{CaiResponseSubmitted} identifies the spectrum of this
compression with the derivatives of the analytic branches.  This proves
part~(ii).
\end{proof}

\begin{corollary}
\label{cor:simple-response}
Let $f_1,\ldots,f_N$ and $\mu_t$ be as in \cref{prop:response}.  Fix
$t_0\in\R^N$ and $n\ge1$.  Suppose that $\Lambda_n^{\mu_{t_0}}$ has
multiplicity one.  Then there is a neighborhood $U$ of $t_0$ such that
$\Lambda_n^{\mu_t}$ has multiplicity one for every $t\in U$, and the function
$t\mapsto\Lambda_n^{\mu_t}$ is real-analytic on $U$.  Choose a real
eigenfunction $\phi_{n,t_0}$ satisfying
\begin{equation*}
 A_{\mu_{t_0}}\phi_{n,t_0}
 =\Lambda_n^{\mu_{t_0}}\phi_{n,t_0},
 \qquad
 \|\phi_{n,t_0}\|_{L^2(\mu_{t_0})}=1.
\end{equation*}
Then, for $1\le j\le N$,
\begin{equation}
 \partial_{t_j}\Lambda_n^{\mu_t}\big|_{t=t_0}
 =-\gamma\Lambda_n^{\mu_{t_0}}
 \int_Df_j\phi_{n,t_0}^2\,d\mu_{t_0}.
 \label{eq:simple-response}
\end{equation}
\end{corollary}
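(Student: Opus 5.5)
The argument reduces to \cref{prop:response}, specialized to multiplicity $r=1$, together with a continuity argument that keeps the index $n$ fixed. As in that proof, replacing $\mu$ by $\mu_{t_0}\in\mathcal M_*$ lets us assume $t_0=0$. Set $\Lambda:=\Lambda_n^{\mu}$. Since $A_\mu$ has compact resolvent and purely discrete spectrum (\cref{prop:green-operator}), simplicity of $\Lambda$ means
\[
\Lambda_{n-1}^\mu<\Lambda<\Lambda_{n+1}^\mu ,
\]
where the left inequality is read with $\Lambda_0^\mu:=0$, as in the paper. Choose $\varepsilon>0$ with
\[
\Lambda_{n-1}^\mu<\Lambda-\varepsilon,\qquad \Lambda+\varepsilon<\Lambda_{n+1}^\mu .
\]

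By \cref{prop:response}(i), the maps $t\mapsto\Lambda_{n-1}^{\mu_t}$, $t\mapsto\Lambda_n^{\mu_t}$ and $t\mapsto\Lambda_{n+1}^{\mu_t}$ are continuous. Hence there is a neighbourhood $U$ of $0$ on which
\[
\Lambda_{n-1}^{\mu_t}<\Lambda-\varepsilon<\Lambda_n^{\mu_t}<\Lambda+\varepsilon<\Lambda_{n+1}^{\mu_t}.
\]
So exactly one eigenvalue, counted with multiplicity, lies in $(\Lambda-\varepsilon,\Lambda+\varepsilon)$, and it is $\Lambda_n^{\mu_t}$. This gives the persistence of multiplicity one.

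For real-analyticity, I transport to the common-domain holomorphic type~(a) family $a_z$ on $V_\mu^\C$ used in the proof of \cref{prop:response}. For real $t$, the operator associated with $a_t$ on $L^2(\mu)$ is unitarily equivalent, via $U_t$, to $A_{\mu_t}$, so the spectra coincide. For $z$ near $0$, the Riesz projection
\[
P(z)=-\frac1{2\pi i}\oint_{|\zeta-\Lambda|=\varepsilon}(T(z)-\zeta)^{-1}\,d\zeta
\]
is holomorphic in $z\in\C^N$, since the resolvent of a holomorphic type~(a) family is holomorphic off the spectrum. Its rank is locally constant, hence equal to $1$. The eigenvalue inside the circle is therefore
\[
\lambda(z)=\operatorname{tr}\bigl(T(z)P(z)\bigr)=\operatorname{tr}\bigl(T(z)(T(z)+1)^{-1}P(z)\bigr)\big/\operatorname{tr}\bigl((T(z)+1)^{-1}P(z)\bigr),
\]
using bounded operators only. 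This is holomorphic in several complex variables, and its restriction to real $t\in U$ is $\Lambda_n^{\mu_t}$, which is therefore real-analytic. If one prefers to avoid several-variable Kato theory, one can instead cite the multi-parameter version from \cite{CaiResponseSubmitted}. This is the step I regard as the main technical point: the one-parameter branches of \cref{prop:response}(ii) alone give only separate analyticity in each coordinate, so joint analyticity needs the holomorphic-projection argument.

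Finally, the derivative formula. Since $\Lambda_n^{\mu_t}$ is real-analytic, $\partial_{t_j}\Lambda_n^{\mu_t}|_{t=0}$ equals the derivative of the restriction $s\mapsto\Lambda_n^{\mu_{s\mathbf e_j}}$. By \cref{prop:response}(ii) with $r=1$, the unique branch through $\Lambda$ coincides with this restriction near $s=0$, because of the spectral isolation in $(\Lambda-\varepsilon,\Lambda+\varepsilon)$. Its derivative is the unique eigenvalue of the $1\times1$ compression $B_{f_j}^\Lambda(\mu)$. By \eqref{eq:cluster-response} with $\phi_1=\phi_{n,0}$, this eigenvalue equals
\[
-\gamma\Lambda\int_D f_j\phi_{n,0}^2\,d\mu .
\]
Undoing the translation $t_0\mapsto0$ gives \eqref{eq:simple-response}.
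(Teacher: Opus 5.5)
Your proposal is correct and follows the paper's route. Continuity from \cref{prop:response}(i) keeps the isolating window $(\Lambda-\varepsilon,\Lambda+\varepsilon)$ around $\Lambda_n^{\mu_t}$ for $t$ near $t_0$. The rank-one Riesz projection of the holomorphic type~(a) family then gives joint real-analyticity, and the $r=1$ case of \cref{prop:response}(ii) yields \eqref{eq:simple-response}. The one difference is that you write out the several-variable Riesz-projection and trace argument, whereas the paper delegates that step to the Riesz-projection stability result it cites from the companion paper.
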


\begin{proof}
Part~(i) of \cref{prop:response} preserves an isolating spectral gap on
some neighborhood $U$ of $t_0$.  The corresponding rank-one eigenvalue branch
is real-analytic by \cite[Proposition~2.5]{CaiResponseSubmitted}, and
\cref{prop:response}(ii) gives \eqref{eq:simple-response}.
\end{proof}

\subsection{First-order splitting}

We now use the countable core $Q$ from \eqref{eq:finite-mode-cores} to find
finite-mode directions that split multiple eigenvalues to first order.  Recall
that, for an eigenvalue $\Lambda$ of $A_\mu$ and $q\in Q$,
\begin{equation*}
 E_\mu(\Lambda)=\ker(A_\mu-\Lambda),
 \qquad
 C_q^\Lambda(\mu)=P_\Lambda^\mu\Mult_q|_{E_\mu(\Lambda)}.
\end{equation*}
For orthonormal $u,v\in E_\mu(\Lambda)$,
\begin{equation}
 \left\langle C_q^\Lambda(\mu)u,v\right\rangle_{L^2(\mu)}
 =\int_Dquv\,d\mu,
 \qquad
 \left\langle C_q^\Lambda(\mu)u,u\right\rangle_{L^2(\mu)}
 -\left\langle C_q^\Lambda(\mu)v,v\right\rangle_{L^2(\mu)}
 =\int_Dq(u^2-v^2)\,d\mu.
 \label{eq:two-dimensional-response}
\end{equation}
Thus a direction is non-scalar on the plane spanned by $u,v$ exactly when the
two quantities on the right of \eqref{eq:two-dimensional-response} do not both
vanish.

\begin{lemma}
\label{lem:non-scalar-compression}
Let $\Lambda$ be an eigenvalue of $A_\mu$ with
$\dim E_\mu(\Lambda)\ge2$.  For every orthonormal pair
$u,v\in E_\mu(\Lambda)$, there exists $q\in Q$ such that
\begin{equation}
 \left(
  \int_Dquv\,d\mu,
  \int_Dq(u^2-v^2)\,d\mu
 \right)\ne(0,0).
 \label{eq:non-scalar-compression}
\end{equation}
\end{lemma}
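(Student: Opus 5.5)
Argue by contradiction: suppose both integrals in \eqref{eq:non-scalar-compression} vanish for every $q\in Q$. Since $u,v\in E_\mu(\Lambda)$ have bounded continuous representatives by \cref{prop:green-operator}(iii), the products $uv$ and $u^2-v^2$ lie in $L^1(\mu)$. Hence
\[
 \sigma_1:=uv\,\mu,\qquad \sigma_2:=(u^2-v^2)\,\mu
\]
are finite signed Radon measures on $D$. By \eqref{eq:Q-separates} both vanish, so
\[
 uv=0\quad\text{and}\quad u^2=v^2\qquad\mu\text{-a.e.}
\]
Then $u^4=u^2v^2=0$ $\mu$-a.e., so $u=0$ in $L^2(\mu)$. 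This contradicts $\|u\|_{L^2(\mu)}=1$.

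The argument is purely algebraic once the separation lemma applies, so the only points to check are integrability and measure-theoretic bookkeeping. Since $u,v\in L^2(\mu)$, Cauchy--Schwarz already gives $uv\in L^1(\mu)$ and $u^2-v^2\in L^1(\mu)$, even without continuity. The measures $\sigma_1,\sigma_2$ are therefore finite signed Borel measures on $D$, and hence Radon, because $D$ is a Polish space. The equalities $\int q\,d\sigma_i=\int_D q\,uv\,d\mu$ etc. hold because each $q\in Q$ is bounded. Lemma \eqref{eq:Q-separates} then gives $\sigma_i=0$, which means the densities vanish $\mu$-a.e.

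From $\sigma_2=0$ and $\sigma_1=0$ we get $u^2=v^2$ and $uv=0$ $\mu$-a.e. Multiplying, $u^2\cdot u^2=u^2v^2=(uv)^2=0$, so $u=0$ $\mu$-a.e. This contradicts orthonormality, which is all the multiplicity hypothesis is used for. One subtlety: if complex eigenfunctions were allowed, the identities would need $\bar v$. The statement concerns real orthonormal pairs, matching \eqref{eq:two-dimensional-response}, so I would note that $u,v$ are real-valued, as in \cref{prop:response}. There is no genuine obstacle. The main care point is making sure the separation lemma is applied to $L^1(\mu)$ densities rather than requiring continuity.
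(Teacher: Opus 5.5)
Your proof is correct and is the same argument the paper gives: apply \eqref{eq:Q-separates} to the finite signed measures $uv\,\mu$ and $(u^2-v^2)\,\mu$, deduce $uv=0$ and $u^2=v^2$ $\mu$-a.e., and conclude $u=0$ $\mu$-a.e., which contradicts $\|u\|_{L^2(\mu)}=1$. Your extra remarks spell out steps the paper leaves implicit: integrability via Cauchy--Schwarz, the identity $u^4=(uv)^2=0$, and the fact that $u,v$ are real.
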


\begin{proof}
If both integrals vanished for every $q\in Q$, then
\eqref{eq:Q-separates} would give
\begin{equation*}
 uv\,\mu=0,
 \qquad
 (u^2-v^2)\mu=0.
\end{equation*}
Hence $uv=0$ and $u^2=v^2$ $\mu$-a.e., so $u=v=0$ $\mu$-a.e., contrary to
$\|u\|_{L^2(\mu)}=1$.
\end{proof}

The preceding plane-by-plane separation has the following finite-dimensional
consequence for the already defined response operators.

\begin{lemma}
\label{lem:finite-dimensional-splitting}
Let $\Lambda$ be an eigenvalue of $A_\mu$ and put
\[
 r:=\dim E_\mu(\Lambda)\ge2.
\]
Then there exist $q_1,\ldots,q_m\in Q$ and $a_1,\ldots,a_m\in\R$ such that
\begin{equation}
 \sum_{j=1}^m a_j C_{q_j}^\Lambda(\mu)
 \quad\text{has }r\text{ distinct eigenvalues}.
 \label{eq:real-simple-combination}
\end{equation}
\end{lemma}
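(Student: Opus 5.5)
We will prove the statement by working inside the finite-dimensional real space $\mathcal S$ of self-adjoint operators on $E:=E_\mu(\Lambda)$. Fix a real $L^2(\mu)$-orthonormal basis of $E$; eigenfunctions can be chosen real because $A_\mu$ is real. Each $C_q^\Lambda(\mu)$ is then a real symmetric $r\times r$ matrix with entries $\int_Dq\phi_a\phi_c\,d\mu$. Let $\mathcal C:=\Span_\R\{C_q^\Lambda(\mu):q\in Q\}\subset\mathrm{Sym}_r(\R)$. This subspace is finite-dimensional, so it is spanned by finitely many $C_{q_1},\dots,C_{q_m}$. The task therefore reduces to showing that $\mathcal C$ contains a matrix with simple spectrum.

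The key structural input is an irreducibility statement: the only real symmetric matrices commuting with every element of $\mathcal C$ are scalars. We argue through the commutant. Suppose a symmetric $S$ commutes with all of $\mathcal C$ but is not scalar. Then $S$ has two distinct eigenvalues, and we may take orthonormal vectors $u,v$ lying in different eigenspaces $E_1,E_2$ of $S$. Every $C\in\mathcal C$ preserves the eigenspaces of $S$, so $\langle Cu,v\rangle=0$ for all $C\in\mathcal C$. This gives $\int_Dquv\,d\mu=0$ for every $q\in Q$.

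By \eqref{eq:Q-separates} we conclude $uv=0$ $\mu$-a.e. This must hold for every pair $u\in E_1$, $v\in E_2$. Replacing $u$ by $(u+u')/\sqrt2$ with $u'\in E_1$ keeps it in $E_1$, so the condition persists under such mixing. Since $q=1$ is available in the real span ($1$ is a uniform limit of elements of $Q$), we also have $\int uv\,d\mu=0$ automatically, but that gives nothing new.

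The contradiction comes from the eigenfunction equation, and this is where I expect the main obstacle. Write $\Omega_1:=\{u\neq0\}$ and $\Omega_2:=\{v\ne0\}$; these are $\mu$-disjoint up to null sets. Both $u$ and $v$ are continuous by \cref{prop:green-operator}(iii). Then $u+v$ and $u-v$ are both eigenfunctions in $E$ with the same modulus, and $(u+v)^2-(u-v)^2=4uv=0$. I would try to derive a contradiction using the Green representation $u=\Lambda G_\mu u$, together with the fact that $G_D>0$ strictly and $\supp\mu=D$.

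A cleaner route is Lemma~\ref{lem:non-scalar-compression} combined with a perturbation argument, avoiding a full commutant analysis. The approach is induction on the number of distinct eigenvalues. Start with $T_0=0$. Given $T\in\mathcal C$ with a repeated eigenvalue, let $F$ be an eigenspace of $T$ of dimension at least $2$ and pick orthonormal $u,v\in F$. Lemma~\ref{lem:non-scalar-compression} supplies $q$ with $P_FC_qP_F$ non-scalar on $\Span\{u,v\}$, and hence non-scalar on $F$.

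For small $\varepsilon$, eigenvalue perturbation theory applied to $T+\varepsilon C_q$ shows that eigenvalues from different eigenspaces of $T$ stay separated. Meanwhile the cluster at $F$ splits according to the eigenvalues of $P_FC_qP_F$, which has at least two distinct values. So the number of distinct eigenvalues strictly increases. After at most $r-1$ steps we obtain a combination $\sum a_jC_{q_j}$ with $r$ distinct eigenvalues.

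The point to check carefully is that the eigenspace $F$ is an arbitrary subspace of $E$, and Lemma~\ref{lem:non-scalar-compression} applies to any orthonormal pair in $E$, so it covers $F$. Non-scalarity on a $2$-plane implies non-scalarity of the compression to $F$: if $P_FC_qP_F=cI_F$, then both integrals in \eqref{eq:two-dimensional-response} would vanish. The first-order splitting statement itself is standard finite-dimensional Rellich/Kato perturbation theory, so I regard that as routine.
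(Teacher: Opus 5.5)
Your second (``cleaner'') route is correct and is essentially the paper's proof. The paper takes a combination $C_{\max}$ with the maximal number of distinct eigenvalues and reaches a contradiction by exactly your step: it applies \cref{lem:non-scalar-compression} to an orthonormal pair in a degenerate eigenspace and then uses first-order perturbation of $C_{\max}+\varepsilon C_q^\Lambda(\mu)$, which you instead iterate starting from $T_0=0$. You should drop the commutant detour, because a trivial symmetric commutant alone would not produce a simple-spectrum element: the span of three anticommuting symmetric involutions on $\R^4$ has only scalar symmetric commutant, yet every element of it has doubly degenerate spectrum. What actually drives the argument is the plane-by-plane non-scalarity supplied by that lemma.
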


\begin{proof}
Choose a finite real linear combination of the operators
$C_q^\Lambda(\mu)$ having the largest possible number of distinct eigenvalues,
and denote it by $C_{\max}$.  If $C_{\max}$ had fewer than $r$ distinct eigenvalues,
some eigenvalue $\alpha$ would have an eigenspace
\begin{equation*}
 E_\alpha:=\ker(C_{\max}-\alpha I),
 \qquad s:=\dim E_\alpha\ge2.
\end{equation*}
Choose orthonormal $u,v\in E_\alpha$.  By
\cref{lem:non-scalar-compression}, there is $q\in Q$ for which
\begin{equation*}
 \left(
  \langle C_q^\Lambda(\mu)u,v\rangle_{L^2(\mu)},
  \langle C_q^\Lambda(\mu)u,u\rangle_{L^2(\mu)}
  -\langle C_q^\Lambda(\mu)v,v\rangle_{L^2(\mu)}
 \right)\ne(0,0).
\end{equation*}
Extend $u,v$ to an orthonormal basis $w_1,\ldots,w_s$ of
$E_\alpha$.  The symmetric matrix
\begin{equation*}
 \left(
  \langle C_q^\Lambda(\mu)w_i,w_j\rangle_{L^2(\mu)}
 \right)_{i,j=1}^s
\end{equation*}
is therefore non-scalar.  Let
$\xi_1\le\cdots\le\xi_s$ be its eigenvalues; then
$\xi_p<\xi_{p+1}$ for some $p$.  The finite-dimensional first-order
perturbation formula gives branches issuing from $\alpha$ with
\begin{equation*}
 \lambda_i(\varepsilon)
 =\alpha+\varepsilon\xi_i+O(\varepsilon^2),
 \qquad 1\le i\le s.
\end{equation*}
Hence
\begin{equation*}
 \lambda_{p+1}(\varepsilon)-\lambda_p(\varepsilon)
 =\varepsilon(\xi_{p+1}-\xi_p)+O(\varepsilon^2)>0
\end{equation*}
for all sufficiently small $\varepsilon>0$.  The other spectral clusters of
$C_{\max}$ remain separated, so
$C_{\max}+\varepsilon C_q^\Lambda(\mu)$ has more distinct eigenvalues than
$C_{\max}$, a contradiction.  Therefore $C_{\max}$ has $r$
distinct eigenvalues, which is \eqref{eq:real-simple-combination}.
\end{proof}

We now return from real linear combinations to a single direction in the
countable core $Q$.

\begin{proposition}
\label{prop:single-splitting}
Every multiple positive eigenvalue $\Lambda$ of $A_\mu$ admits $q\in Q$ such
that
\begin{equation}
 B_q^\Lambda(\mu)
 \quad\text{has }\dim E_\mu(\Lambda)\text{ distinct eigenvalues}.
 \label{eq:single-splitting-direction}
\end{equation}
\end{proposition}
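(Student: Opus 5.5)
The plan is to pass from the real combination in \cref{lem:finite-dimensional-splitting} to a single rational direction, using linearity of $q\mapsto C_q^\Lambda(\mu)$ and openness of the simple-spectrum condition among symmetric matrices. Let $r:=\dim E_\mu(\Lambda)\ge2$ and fix a real $L^2(\mu)$-orthonormal basis $\phi_1,\ldots,\phi_r$ of $E_\mu(\Lambda)$. By \eqref{eq:two-dimensional-response}, for every $f\in\mathcal T$ the operator $C_f^\Lambda(\mu)$ is represented by the real symmetric matrix
\[
 \Bigl(\int_Df\phi_a\phi_c\,d\mu\Bigr)_{a,c=1}^r .
\]
The eigenfunctions are bounded and continuous by \cref{prop:green-operator}(iii), and $\mu$ is finite, so this matrix depends linearly on $f$. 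It is also continuous in $\|f\|_\infty$, with entry bound $\|f\|_\infty\|\phi_a\|_{L^2(\mu)}\|\phi_c\|_{L^2(\mu)}=\|f\|_\infty$.

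\cref{lem:finite-dimensional-splitting} gives $q_1,\ldots,q_m\in Q$ and $a_1,\ldots,a_m\in\R$ such that
\[
 C:=\sum_ja_jC_{q_j}^\Lambda(\mu)=C_{f}^\Lambda(\mu),\qquad f:=\sum_ja_jq_j\in\mathcal T,
\]
has $r$ distinct eigenvalues. Let $g>0$ be the minimal gap between them. Choose rationals $a_j'$ with $\sum_j|a_j-a_j'|\,\|q_j\|_\infty$ so small that the matrix perturbation $C_{f'}^\Lambda(\mu)-C$, where $f':=\sum_ja_j'q_j$, has operator norm below $g/2$. Weyl's inequality for symmetric matrices then keeps each ordered eigenvalue within $g/2$ of the corresponding eigenvalue of $C$, so the eigenvalues of $C_{f'}^\Lambda(\mu)$ remain distinct. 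Since $Q$ is a $\Q$-vector space, $q:=f'=\sum_ja_j'q_j\in Q$.

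Finally,
\[
 B_q^\Lambda(\mu)=-\gamma\Lambda\,C_q^\Lambda(\mu),
\]
with $\gamma\Lambda\ne0$ because $\Lambda>0$. Multiplying by a nonzero scalar preserves the number of distinct eigenvalues, which gives \eqref{eq:single-splitting-direction}.

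The only point needing care is the linearity $C^\Lambda_{\sum a_jq_j}=\sum a_jC^\Lambda_{q_j}$ together with the norm control. Both follow directly from the definition $C_f^\Lambda=P_\Lambda^\mu\Mult_f|_{E_\mu(\Lambda)}$ and $\|\Mult_f\|_{L^2(\mu)\to L^2(\mu)}\le\|f\|_\infty$, so I do not expect any step to be a serious obstacle.
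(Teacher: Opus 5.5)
Your proposal is correct and follows essentially the same route as the paper. You take the real combination from \cref{lem:finite-dimensional-splitting}, move the coefficients to nearby rationals while keeping the spectrum simple, and conclude by linearity of $f\mapsto C_f^\Lambda(\mu)$ together with $B_q^\Lambda(\mu)=-\gamma\Lambda C_q^\Lambda(\mu)$ and $\gamma\Lambda\neq0$. The only cosmetic difference is the stability step: you quantify it with Weyl's inequality and the bound $\|C_f^\Lambda(\mu)\|\le\|f\|_\infty$, whereas the paper appeals to continuity of the discriminant in the coefficients.
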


\begin{proof}
By \cref{lem:finite-dimensional-splitting}, choose
$q_1,\ldots,q_m\in Q$ and $a_1,\ldots,a_m\in\R$ such that
\begin{equation*}
 \sum_{j=1}^m a_j C_{q_j}^\Lambda(\mu)
\end{equation*}
has distinct eigenvalues.  Its discriminant is therefore nonzero and
depends continuously on the coefficients.  The same remains true after
replacing $a_j$ by sufficiently close rational numbers $b_j$.  Set
\begin{equation*}
 q:=\sum_{j=1}^m b_jq_j\in Q.
\end{equation*}
By linearity in the direction,
\begin{equation*}
 C_q^\Lambda(\mu)
 =\sum_{j=1}^m b_jC_{q_j}^\Lambda(\mu),
 \qquad
 B_q^\Lambda(\mu)=-\gamma\Lambda C_q^\Lambda(\mu),
\end{equation*}
so \eqref{eq:single-splitting-direction} follows.
\end{proof}

The same choice can be made simultaneously for finitely many multiple
clusters.

\begin{corollary}
\label{cor:simultaneous-splitting}
For any finite collection
$\Lambda^{(1)},\ldots,\Lambda^{(L)}$ of distinct multiple positive
eigenvalues, there
exists $q\in Q$ such that, for every $1\le\ell\le L$,
\begin{equation}
 B_q^{\Lambda^{(\ell)}}(\mu)
 \quad\text{has }\dim E_\mu(\Lambda^{(\ell)})\text{ distinct eigenvalues}.
 \label{eq:simultaneous-splitting}
\end{equation}
\end{corollary}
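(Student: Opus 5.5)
The plan is to repeat the discriminant-perturbation argument of \cref{prop:single-splitting}, now handling all clusters at once. The key point is that the map $q\mapsto B_q^{\Lambda^{(\ell)}}(\mu)$ is linear in the direction $q$, and that having distinct eigenvalues is an open, dense-type condition on finite-dimensional symmetric matrices.

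\textbf{Step 1: one good direction per cluster.} For each $\ell$, \cref{prop:single-splitting} supplies $q^{(\ell)}\in Q$ such that $C_{q^{(\ell)}}^{\Lambda^{(\ell)}}(\mu)$ has $r_\ell:=\dim E_\mu(\Lambda^{(\ell)})$ distinct eigenvalues. Fix a real orthonormal basis of each $E_\mu(\Lambda^{(\ell)})$. In these bases, every $C_q^{\Lambda^{(\ell)}}(\mu)$ is the real symmetric $r_\ell\times r_\ell$ matrix $\bigl(\int_D q\phi_a\phi_c\,d\mu\bigr)_{a,c}$, and this matrix depends linearly on $q$.

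\textbf{Step 2: a polynomial that detects simultaneous splitting.} For $s=(s_1,\ldots,s_L)\in\R^L$, set $q_s:=\sum_{k=1}^L s_kq^{(k)}$ and
\[
 P(s):=\prod_{\ell=1}^L\Disc\Bigl(\sum_{k=1}^L s_k C_{q^{(k)}}^{\Lambda^{(\ell)}}(\mu)\Bigr).
\]
Each factor is the discriminant of the characteristic polynomial of a symmetric matrix whose entries are linear in $s$. Hence each factor, and therefore $P$, is a real polynomial in $s$. The $\ell$-th factor is not identically zero, because at $s=\mathbf e_\ell$ it equals the discriminant of $C_{q^{(\ell)}}^{\Lambda^{(\ell)}}(\mu)$, which is nonzero by Step 1. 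A finite product of nonzero polynomials is a nonzero polynomial. Its zero set is therefore a proper algebraic subset of $\R^L$ and contains no nonempty open set. Consequently there is a rational point $s\in\Q^L$ with $P(s)\neq0$.

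\textbf{Step 3: conclusion.} Take $q:=q_s$. It lies in $Q$ because $Q$ is a $\Q$-span and $s$ is rational. By linearity, $C_q^{\Lambda^{(\ell)}}(\mu)=\sum_k s_kC_{q^{(k)}}^{\Lambda^{(\ell)}}(\mu)$, and its discriminant is nonzero, so it has $r_\ell$ distinct eigenvalues for every $\ell$. Finally, $B_q^{\Lambda^{(\ell)}}(\mu)=-\gamma\Lambda^{(\ell)}C_q^{\Lambda^{(\ell)}}(\mu)$ with $\Lambda^{(\ell)}>0$, and multiplying by this nonzero scalar preserves the number of distinct eigenvalues. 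This gives \eqref{eq:simultaneous-splitting}.

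\textbf{Main point to check.} The only step needing care is that $s\mapsto\Disc$ really is a nonzero polynomial in $s$, and not merely a continuous function. This follows because the discriminant is a polynomial in the matrix entries, and the entries are linear in $s$. Everything else is bookkeeping.
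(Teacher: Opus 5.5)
Your proposal is correct and follows essentially the same route as the paper: choose one splitting direction $q^{(\ell)}$ per cluster from \cref{prop:single-splitting}, observe that each discriminant is a polynomial in the coefficient vector that is nonzero at $\mathbf e_\ell$, and pick a rational point off the zero set. The only cosmetic difference is that you combine the $L$ discriminants into one product polynomial, whereas the paper intersects the $L$ open dense sets where the individual discriminants are nonzero.
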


\begin{proof}
For each $\ell$, choose $q^{(\ell)}\in Q$ from
\cref{prop:single-splitting} so that
$B_{q^{(\ell)}}^{\Lambda^{(\ell)}}(\mu)$ has distinct eigenvalues.  For
$a=(a_1,\ldots,a_L)\in\R^L$, put
\begin{equation*}
 q_a:=\sum_{\ell=1}^L a_\ell q^{(\ell)}\in\mathcal T.
\end{equation*}
For each fixed $\ell$, the discriminant of
$B_{q_a}^{\Lambda^{(\ell)}}(\mu)$ is a polynomial in $a$.  It is not the
zero polynomial, since it is nonzero when $a_\ell=1$ and $a_k=0$ for
$k\ne\ell$.  Hence the coefficient vectors $a$ for which every
$B_{q_a}^{\Lambda^{(\ell)}}(\mu)$, $1\le\ell\le L$, has distinct
eigenvalues form a nonempty open dense subset of $\R^L$.  Choose
$a\in\Q^L$ in this set.  Then $q_a\in Q$ and \eqref{eq:simultaneous-splitting}
holds for every $\ell$.
\end{proof}

\subsection{The square identity and transversality}

Recall $L_0=-(2\pi)^{-1}\Delta_D$, and set
\begin{equation*}
 \Gamma(u,v):=\frac1{2\pi}\nabla u\cdot\nabla v,
 \qquad
 \Gamma(u):=\Gamma(u,u).
\end{equation*}
For $u\in H_0^1(D)$ we understand $L_0u$ distributionally through
\[
 \langle L_0u,\zeta\rangle=\E(u,\zeta),
 \qquad \zeta\in C_c^\infty(D).
\]
The square identity separates a singular response measure from an absolutely
continuous energy measure.

\begin{lemma}
\label{lem:square}
Let $\phi$ be real-valued and satisfy
$A_\mu\phi=\Lambda\phi$ with $\Lambda>0$ and
$\|\phi\|_{L^2(\mu)}=1$.  Then $\phi^2\in H_0^1(D)$ and, in
$\mathcal D'(D)$,
\begin{equation}
 L_0(\phi^2)=2\Lambda\phi^2\mu-2\Gamma(\phi)\,dx.
 \label{eq:square-identity}
\end{equation}
The two measures on the right satisfy
\begin{equation*}
 2\Lambda\phi^2\mu\perp dx,
 \qquad
 2\Gamma(\phi)\,dx\ll dx,
 \qquad
 (2\Lambda\phi^2\mu)(D)=(2\Gamma(\phi)\,dx)(D)=2\Lambda.
\end{equation*}
\end{lemma}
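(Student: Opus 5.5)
We follow the standard product-rule computation, justified by the boundedness of eigenfunctions and the trace-form structure. By \cref{prop:green-operator}(iii), $\phi=\Lambda G_\mu\phi$ has a bounded continuous representative; we identify $\phi$ with it, so $\phi\in H_0^1(D)\cap L^\infty(D)$ and $\widetilde\phi=\phi$ quasi-everywhere. Since $H_0^1(D)\cap L^\infty$ is an algebra, with $\nabla(\phi^2)=2\phi\nabla\phi$, we obtain $\phi^2\in H_0^1(D)$. For the same reason $\phi\zeta\in H_0^1(D)$ for every $\zeta\in C_c^\infty(D)$. Moreover $\widetilde{\phi\zeta}=\phi\zeta$ is bounded, hence lies in $L^2(\mu)$, so $\phi\zeta\in V_\mu$.

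Next we test the eigen-equation. For $v\in V_\mu$, the generator identity gives $\E(\phi,v)=\Lambda\int_D\phi\widetilde v\,d\mu$. Taking $v=\phi\zeta$ yields
\[
 \frac1{2\pi}\int_D\nabla\phi\cdot(\zeta\nabla\phi+\phi\nabla\zeta)\,dx=\Lambda\int_D\phi^2\zeta\,d\mu .
\]
On the other hand,
\[
 \langle L_0(\phi^2),\zeta\rangle=\E(\phi^2,\zeta)=\frac1{2\pi}\int_D2\phi\nabla\phi\cdot\nabla\zeta\,dx .
\]
Substituting the first identity into the second gives
\[
 \langle L_0(\phi^2),\zeta\rangle=2\Lambda\int_D\phi^2\zeta\,d\mu-2\int_D\Gamma(\phi)\zeta\,dx,
\]
which is \eqref{eq:square-identity}.

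For the measure statements, singularity of $2\Lambda\phi^2\mu$ follows from $\mu\perp dx$, recorded for $\mathcal M_*$ in \cref{cor:structural-orbit}. The measure $\Gamma(\phi)\,dx$ is absolutely continuous with integrable density because $\phi\in H_0^1$. Both total masses equal $2\Lambda$: on one side $\|\phi\|_{L^2(\mu)}=1$, and on the other $\int_D2\Gamma(\phi)\,dx=2\E(\phi,\phi)=2\Lambda\|\phi\|^2_{L^2(\mu)}=2\Lambda$, using $v=\phi$ in the form identity.

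The main point needing care is the legitimacy of the test function $\phi\zeta$. This requires that the quasi-continuous representative used in $V_\mu$ coincide with the continuous bounded one. The identification holds because $\widetilde{U_\mu h}=G_\mu h$ quasi-everywhere by \eqref{eq:potential-green}, and a continuous function in $H_0^1$ is its own quasi-continuous version. It also requires that products of bounded quasi-continuous functions remain quasi-continuous, which is immediate. Everything else is routine.
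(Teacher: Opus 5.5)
Your proposal is correct and follows the paper's proof essentially step for step. You use boundedness of $\phi$ from \cref{prop:green-operator}(iii) and the chain rule for $\phi^2$, test the eigenvalue equation with $\zeta\phi$, eliminate $\int_D\phi\,\Gamma(\phi,\zeta)\,dx$ against $\E(\phi^2,\zeta)$, and then read off the singularity and the two masses. You also check explicitly, via \eqref{eq:potential-green}, that the continuous representative $\Lambda G_\mu\phi$ agrees quasi-everywhere with $\widetilde\phi$; the paper leaves this implicit when it asserts that $\zeta\phi$ lies in the trace domain.
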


\begin{proof}
By \cref{prop:green-operator}, $\phi\in H_0^1(D)\cap L^\infty(D)$.  The Sobolev
chain rule gives
\begin{equation}
 \phi^2\in H_0^1(D),
 \qquad
 \nabla(\phi^2)=2\phi\nabla\phi.
 \label{eq:square-chain}
\end{equation}
For real $\zeta\in C_c^\infty(D)$, the function $\zeta\phi$ belongs to the
trace domain.  Hence
\begin{align}
 \Lambda\int_D\zeta\phi^2\,d\mu
 &=\E(\phi,\zeta\phi)\notag\\
 &=\int_D\zeta\Gamma(\phi)\,dx
   +\int_D\phi\Gamma(\phi,\zeta)\,dx,
 \label{eq:eigen-test-product}
\end{align}
while \eqref{eq:square-chain} gives
\begin{equation}
 \E(\phi^2,\zeta)
 =2\int_D\phi\Gamma(\phi,\zeta)\,dx.
 \label{eq:square-test}
\end{equation}
Eliminating the last integral between \eqref{eq:eigen-test-product} and
\eqref{eq:square-test} yields
\[
 \E(\phi^2,\zeta)
 =2\Lambda\int_D\zeta\phi^2\,d\mu
  -2\int_D\zeta\Gamma(\phi)\,dx,
\]
which is \eqref{eq:square-identity}.  Since $\mu\perp dx$ and
$\Gamma(\phi)\ge0$, the two measures in \eqref{eq:square-identity} are
mutually singular.  Their masses are
\begin{equation*}
 2\Lambda\int_D\phi^2\,d\mu=2\Lambda,
 \qquad
 2\int_D\Gamma(\phi)\,dx=2\E(\phi,\phi)=2\Lambda.
\end{equation*}
\end{proof}

For a positive eigenvalue $\Lambda_n^\mu$ of multiplicity one, choose a
normalized real eigenfunction $\phi_n^\mu$ and define the real-linear
functional
\begin{equation*}
 \ell_n^\mu(f)
 :=-\gamma\Lambda_n^\mu
 \int_Df(\phi_n^\mu)^2\,d\mu,
 \qquad f\in\mathcal T.
\end{equation*}
This value is independent of the sign of $\phi_n^\mu$.
By \eqref{eq:simple-response},
\begin{equation*}
 \ell_n^\mu(f)
 =\left.\frac{d}{dt}\Lambda_n^{e^{\gamma tf}\mu}\right|_{t=0}.
\end{equation*}
The square identity turns any linear response relation into a Vandermonde
system.

\begin{proposition}
\label{prop:transversality}
Let $\Lambda_{n_1}^\mu,\ldots,\Lambda_{n_N}^\mu$ be pairwise distinct
positive eigenvalues, each of multiplicity one.  Then the response functionals
$\ell_{n_1}^\mu,\ldots,\ell_{n_N}^\mu$ are linearly independent on $Q$.
\end{proposition}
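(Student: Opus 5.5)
Suppose, for contradiction, that there are real $c_1,\ldots,c_N$, not all zero, with $\sum_i c_i\ell_{n_i}^\mu(q)=0$ for every $q\in Q$. Writing $\Lambda_i:=\Lambda_{n_i}^\mu$ and $\phi_i:=\phi_{n_i}^\mu$, this reads
\[
 \int_D q\,\sigma=0\quad(q\in Q),
 \qquad
 \sigma:=\sum_{i=1}^N c_i\Lambda_i\phi_i^2\,\mu .
\]
Since each $\phi_i$ is bounded (\cref{prop:green-operator}(iii)) and $\mu$ is finite, $\sigma$ is a finite signed Radon measure. By \eqref{eq:Q-separates}, $\sigma=0$, that is,
\[
 \sum_i c_i\Lambda_i\phi_i^2=0 \quad \mu\text{-a.e.}
\]
The aim is to upgrade this single relation to the full Vandermonde family of relations $\sum_i c_i\Lambda_i^{k}\phi_i^2=0$ for $k=1,\ldots,N$.

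To obtain these, I would apply the square identity \cref{lem:square} to each $\phi_i$ and combine with the weights $c_i$ to get
\[
 L_0\Bigl(\sum_i c_i\phi_i^2\Bigr)=2\sum_ic_i\Lambda_i\phi_i^2\mu-2\sum_ic_i\Gamma(\phi_i)\,dx .
\]
The singular part on the right is $2\sigma=0$, so $w:=\sum_ic_i\phi_i^2\in H_0^1(D)$ satisfies $L_0w=-2\sum_ic_i\Gamma(\phi_i)\,dx$, an absolutely continuous measure.

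The difficulty is that this alone yields no new relation. The plan is therefore to iterate with higher weights. Since the linear relation holds as identities of measures, I would repeat the argument with the test measure $\phi_i^2\mu$ replaced by its image under powers of $A_\mu$. Concretely, I would use the eigen-relation $\E(\phi_i,\zeta\phi_i)=\Lambda_i\int\zeta\phi_i^2\,d\mu$ together with the decomposition into mutually singular parts. The absolutely continuous part $\sum c_i\Gamma(\phi_i)$ and the singular part are separately determined. I would then try to show that $\sum_ic_i\Lambda_i^{k}\phi_i^2\mu=0$ for each $k$ by induction, pairing with test functions of the form $\zeta G_\mu(\cdot)$.

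\textbf{Main obstacle.} The hard step is this induction, i.e.\ producing the higher Vandermonde rows. I expect it to go via the perturbation itself rather than the identity alone. Since $\ell_{n_i}^{\mu_t}$ is defined for all tilts $\mu_t=e^{\gamma tf}\mu$ near $0$ with simple eigenvalues (\cref{cor:simple-response}), the proper contradiction hypothesis is a relation holding along an analytic family. Differentiating it in $t$ and using the square identity to discard the absolutely continuous terms should raise the power of $\Lambda_i$ by one at each step.

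Once $\sum_i c_i\Lambda_i^k\phi_i^2\mu=0$ is known for $k=1,\ldots,N$, the conclusion follows quickly. The Vandermonde matrix $(\Lambda_i^k)_{i,k}$ is invertible because the $\Lambda_i$ are distinct and positive, so $c_i\phi_i^2\mu=0$ for each $i$. Since $\|\phi_i\|_{L^2(\mu)}=1$, this forces $c_i=0$ for every $i$, contradicting the assumption and proving linear independence.
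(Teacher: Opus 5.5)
\textbf{Gap: the induction producing the higher Vandermonde rows is missing.} That induction is the whole content of the proposition, and your proposal does not supply it.

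The mechanism you sketch under ``main obstacle'' does not work. The contradiction hypothesis $\sum_i c_i\ell_{n_i}^\mu=0$ on $Q$ is a relation at the single measure $\mu$, with fixed coefficients $c_i$. Nothing makes it persist for the tilts $\mu_t=e^{\gamma tf}\mu$, since the functionals $\ell_{n_i}^{\mu_t}$ change with $t$. So there is no identity in $t$ to differentiate. Even if there were, second-order perturbation brings in the other eigenfunctions rather than a clean extra factor $\Lambda_i$. The suggestion of pairing with ``$\zeta G_\mu(\cdot)$'' is too vague to assess.

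\textbf{Missing idea: apply \cref{lem:square} with different weights.} Applying it to $w=\sum_ic_i\phi_i^2$ only reproduces $\sigma=0$ as the singular part, as you noticed. Instead, proceed as follows.
\begin{enumerate}
\item Upgrade $\sigma=0$ to $u_1:=\sum_ic_i\Lambda_i\phi_i^2\equiv0$ on $D$. Each $\phi_i$ has a continuous representative by \cref{prop:green-operator}(iii), and $\mu$ has full support. Hence the open set $\{u_1\neq0\}$ is $\mu$-null, and so it is empty. (Full quasi-support would also do.)
\item Then $u_1=0$ as an element of $H_0^1(D)$, so $L_0u_1=0$.
\item Apply \cref{lem:square} with weights $c_i\Lambda_i$. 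Writing $u_k:=\sum_ic_i\Lambda_i^k\phi_i^2$, this gives
\[
 0=L_0u_1=2u_2\,\mu-2\sum_{i=1}^Nc_i\Lambda_i\Gamma(\phi_i)\,dx .
\]
\item Since $\mu\perp dx$, uniqueness of the Lebesgue decomposition forces $u_2\mu=0$. The same continuity and full-support argument then gives $u_2\equiv0$.
\item Repeating this step yields $u_k\equiv0$ for every $k\ge1$.
\end{enumerate}
From there your Vandermonde conclusion finishes the proof exactly as you wrote it. The repair therefore lies entirely inside the square identity plus full support. No perturbation along $\mu_t$ is needed, and none is available.
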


\begin{proof}
Write $\Lambda_i=\Lambda_{n_i}^\mu$ and
$\phi_i=\phi_{n_i}^\mu$.  Suppose
\begin{equation}
 \sum_{i=1}^Na_i\ell_{n_i}^\mu(q)=0
 \qquad(q\in Q).
 \label{eq:response-relation}
\end{equation}
By \eqref{eq:Q-separates}, \eqref{eq:response-relation} implies
\begin{equation*}
 \sum_{i=1}^Na_i\Lambda_i\phi_i^2\,\mu=0.
\end{equation*}
Since $\mu$ has full support and the $\phi_i$ are continuous,
\begin{equation}
 u_1:=\sum_{i=1}^Na_i\Lambda_i\phi_i^2=0
 \qquad\text{on }D.
 \label{eq:u1-zero}
\end{equation}
For $k\ge1$ put
\[
 u_k:=\sum_{i=1}^Na_i\Lambda_i^k\phi_i^2.
\]
If $u_k=0$, then \cref{lem:square} gives
\begin{equation*}
 0=L_0u_k
 =2u_{k+1}\,\mu
  -2\sum_{i=1}^Na_i\Lambda_i^k\Gamma(\phi_i)\,dx.
\end{equation*}
The first measure on the right is singular with respect to $dx$, whereas the
second is absolutely continuous.  Uniqueness of the Lebesgue decomposition
forces $u_{k+1}\mu=0$.  Full support and continuity then give
$u_{k+1}=0$ on $D$.  Starting from \eqref{eq:u1-zero}, induction yields
\begin{equation}
 \sum_{i=1}^Na_i\Lambda_i^k\phi_i(x)^2=0,
 \qquad 1\le k\le N,\quad x\in D.
 \label{eq:vandermonde-system}
\end{equation}
For fixed $x$, the coefficient matrix in \eqref{eq:vandermonde-system} has
determinant
\begin{equation*}
 \det(\Lambda_i^k)_{k,i=1}^N
 =\left(\prod_{i=1}^N\Lambda_i\right)
  \prod_{1\le i<j\le N}(\Lambda_j-\Lambda_i)\ne0.
\end{equation*}
Hence $a_i\phi_i(x)^2=0$ for every $i$ and every $x$.  Integrating against
$\mu$ and using $\|\phi_i\|_{L^2(\mu)}=1$ gives $a_i=0$ for every $i$.
\end{proof}

\section{Conditional slicing and almost-sure simplicity}

A splitting direction makes the relevant collisions isolated along each
coherent one-dimensional orbit.  We first encode this property by measurable finite-window events and then
integrate against the conditional density from \cref{prop:slices}.  Recall from \eqref{eq:finite-mode-cores} that
$Q=\Span_\Q\{e_k:k\ge1\}$ is the countable rational sine core.

\subsection{Local splitting and measurable collision events}

For a measure $\mu\in\mathcal M_*$ and $q\in Q$, we say that $q$ splits a
multiple positive eigenvalue $\Lambda$ to first order when
$B_q^\Lambda(\mu)$ has pairwise distinct eigenvalues.

Fix $\mu_0\in\mathcal M_*$ and $K\ge1$, and suppose that $q\in Q$ splits to
first order every multiple positive eigenvalue of $A_{\mu_0}$ whose full
ordered multiplicity block contains one of the gaps $j,j+1$, $1\le j\le K$.
Put
\[
 d\mu_t=e^{\gamma tq}\,d\mu_0.
\]
The corresponding local consequence is the following.

\begin{lemma}
\label{lem:local-isolation}
There exists $\delta>0$ such that
\begin{equation}
 0<|t|<\delta
 \quad\Longrightarrow\quad
 \Lambda_j^{\mu_t}<\Lambda_{j+1}^{\mu_t},
 \qquad 1\le j\le K.
 \label{eq:local-split-isolation}
\end{equation}
\end{lemma}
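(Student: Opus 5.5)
The plan is a local Kato-type argument carried out one eigenvalue cluster at a time; \cref{prop:response} supplies everything needed. First I fix the finitely many distinct eigenvalues of $A_{\mu_0}$ that appear among $\Lambda_1^{\mu_0},\ldots,\Lambda_{K+1}^{\mu_0}$. For each such eigenvalue $\Lambda$, the full ordered block is the set of indices $n$ with $\Lambda_n^{\mu_0}=\Lambda$. This block is finite, because by \cref{prop:green-operator}(ii) every eigenvalue has finite multiplicity. I then separate the clusters by disjoint open intervals $I_\Lambda$ around the values $\Lambda$. By continuity of $t\mapsto\Lambda_n^{\mu_t}$ (\cref{prop:response}(i), with $N=1$ and $f_1=q$), I can choose $\delta_0>0$ such that for $|t|<\delta_0$ the eigenvalues $\Lambda_n^{\mu_t}$ with $n$ in the block of $\Lambda$ lie in $I_\Lambda$. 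Consequently, every gap $j,j+1$ with $j\le K$ whose two indices belong to different blocks satisfies $\Lambda_j^{\mu_t}<\Lambda_{j+1}^{\mu_t}$ for $|t|<\delta_0$. This strict inequality holds at $t=0$, and the intervals stay disjoint.

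It remains to treat the gaps that lie inside a single block. Such a block belongs to a multiple eigenvalue $\Lambda$ of multiplicity $r\ge2$, and by hypothesis $q$ splits it. I apply \cref{prop:response}(ii) at $t_0=0$ with the single direction $q$. This gives real-analytic branches $\lambda_1,\ldots,\lambda_r$ through $\Lambda$ whose derivatives at $0$ are the $r$ eigenvalues of $B_q^\Lambda(\mu_0)$, and these derivatives are pairwise distinct by assumption. Writing $\lambda_a(s)=\Lambda+s\xi_a+O(s^2)$ with distinct $\xi_a$, there is $\delta_\Lambda>0$ such that for $0<|s|<\delta_\Lambda$ the $r$ values $\lambda_a(s)$ are pairwise distinct.

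The step needing care is identifying these branches with the ordered eigenvalues $\Lambda_n^{\mu_s}$, $n$ in the block. I plan to use the fact, implicit in the Kato--Rellich statement from \cite[Proposition~3.2]{CaiResponseSubmitted} underlying \cref{prop:response}(ii), that for small $|s|$ the spectrum of $A_{\mu_s}$ inside $I_\Lambda$ consists exactly of the values $\lambda_1(s),\ldots,\lambda_r(s)$, counted with multiplicity. The total multiplicity of the Riesz projection is constant and equal to $r$, and the analytic branches exhaust it. Combined with the first step, the block indices are then exactly the eigenvalues of $A_{\mu_s}$ in $I_\Lambda$, in increasing order. Hence the sorted values of $\lambda_a(s)$ coincide with the $\Lambda_n^{\mu_s}$ of the block, and distinctness gives strict inequalities for every internal gap.

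To finish, I take $\delta$ to be the minimum of $\delta_0$ and the finitely many $\delta_\Lambda$. For $0<|t|<\delta$, every gap with $j\le K$ is then either a cross-block gap or an internal gap of a split block, so it is strict in either case, which is \eqref{eq:local-split-isolation}. I expect the main obstacle to be the matching in the third step, namely making sure that no other eigenvalues of $A_{\mu_s}$ enter $I_\Lambda$ and that the ordering is respected. Both points reduce to the continuity of the counting functions together with the constant rank of the Riesz projection.
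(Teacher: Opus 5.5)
Your proposal is correct and follows essentially the same route as the paper. You isolate the finitely many blocks meeting labels $1,\ldots,K+1$ by disjoint intervals, keep cross-block gaps open by stability, and open internal gaps through the distinct first-order branch derivatives from \cref{prop:response}(ii). Your explicit matching of the analytic branches with the ordered labels, via the constant rank $r$ of the spectral projection on $I_\Lambda$, fills in a step the paper leaves implicit; for it, $I_\Lambda$ should be chosen with $\overline{I_\Lambda}\cap\Spec(A_{\mu_0})=\{\Lambda\}$, as the paper does.
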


\begin{proof}
Only finitely many full multiplicity blocks of $A_{\mu_0}$ meet the labels
$1,\ldots,K+1$.  Denote their distinct eigenvalues by
$\Lambda^{(1)},\ldots,\Lambda^{(L)}$, and choose pairwise disjoint bounded
intervals $I_1,\ldots,I_L$ such that
\begin{equation*}
 \overline I_\ell\cap\Spec(A_{\mu_0})
 =\{\Lambda^{(\ell)}\},
 \qquad 1\le \ell\le L.
\end{equation*}
The common-domain analytic perturbation from \cref{prop:response} and the
Riesz-projection stability in
\cite[Proposition~2.5]{CaiResponseSubmitted} imply that, for some
$\delta_0>0$,
\begin{equation*}
 \operatorname{rank}\mathbf1_{I_\ell}(A_{\mu_t})
 =\operatorname{rank}\mathbf1_{I_\ell}(A_{\mu_0}),
 \qquad |t|<\delta_0,\quad 1\le \ell\le L.
\end{equation*}
Thus eigenvalues issued from distinct blocks stay separated.

Consider a gap $j,j+1$ lying inside one block of multiplicity $r$ at
$\Lambda$.  By \cref{prop:response} there are analytic branches
$\lambda_1,\ldots,\lambda_r$ with
\[
 \lambda_i(t)=\Lambda+t\lambda_i'(0)+o(t).
\]
The first-order splitting hypothesis says that the branch derivatives are
pairwise distinct.  Hence
\[
 d_*:=\min_{i<\ell}|\lambda_i'(0)-\lambda_\ell'(0)|>0,
\]
and, after shrinking the parameter interval,
\begin{equation*}
 |\lambda_i(t)-\lambda_\ell(t)|
 \ge \frac{d_*}{2}|t|>0,
 \qquad i\ne\ell,\quad0<|t|<\delta_\Lambda.
\end{equation*}
Therefore every gap internal to this block opens for small nonzero $t$.
Taking the minimum of the finitely many radii
$\delta_0,\delta_\Lambda$ proves \eqref{eq:local-split-isolation}.
\end{proof}

For $K\ge1$ set
\begin{equation*}
 \mathcal C_K
 :=
 \Omega_*\cap
 \bigcup_{j=1}^K\{\Lambda_j^b=\Lambda_{j+1}^b\}.
\end{equation*}
By \cref{prop:borel-spectrum}, $\mathcal C_K\in\Borel(\Omega)$.  For
$q\in Q\setminus\{0\}$ let $\mathcal C_{K,q}$ be the subset of
$\mathcal C_K$ on which $q$ splits to first order every full multiplicity
block containing at least one of the first $K$ gaps.  The splitting condition
can be read from rational spectral windows.

\begin{lemma}
\label{lem:measurable-splitting}
For every $K\ge1$ and $q\in Q\setminus\{0\}$,
\begin{equation*}
 \mathcal C_{K,q}\in\Borel(\Omega).
\end{equation*}
\end{lemma}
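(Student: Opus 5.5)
The plan is to express the splitting condition through the measurable maps of \cref{prop:borel-spectrum}, quantifying only over rational data. Fix $K$ and $q$. For $b\in\mathcal C_K$ the relevant blocks are those of $A_b$ that meet the labels $1,\ldots,K+1$ and contain a gap. Each such block can be isolated by a rational window $(a,c)$ with $0<a<c$ whose closure meets $\Spec(A_b)$ only at the block eigenvalue $\Lambda$. On that window, $\Pi^b_{a,c}\Mult_q\Pi^b_{a,c}$ restricted to $\operatorname{ran}\Pi^b_{a,c}=E_{M_b}(\Lambda)$ equals $C_q^\Lambda(M_b)$. The splitting condition is therefore that this $r\times r$ compression has $r$ distinct eigenvalues, where $r=\operatorname{rank}\Pi^b_{a,c}=\Tr\Pi^b_{a,c}$. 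Distinctness of eigenvalues will be detected by the discriminant written via Newton's identities. The power sums $p_m=\Tr[(\Pi\Mult_q\Pi)^m]$, $1\le m\le 2r$, determine the elementary symmetric functions of the $r$ eigenvalues of the compression. Hence the discriminant $\Disc_r(p_1,\ldots,p_{2r-2})$ is a polynomial in measurable functions of $b$.

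The measurable description then reads as follows. $b\in\mathcal C_{K,q}$ iff $b\in\mathcal C_K$ and, for every $j\le K$ with $\Lambda^b_j=\Lambda^b_{j+1}$, there exist rationals $0<a<c$ and an integer $r\ge2$ such that:
\begin{enumerate}
\item $a<\Lambda^b_j<c$;
\item the labels $i\le K+1+r$ with $\Lambda^b_i\in(a,c)$ form a consecutive run of length $r$ on which $\Lambda^b$ is constant;
\item the boundary labels satisfy $\Lambda^b_i\notin[a,c]$;
\item $\Disc_r\bigl(\Tr[(\Pi^b_{a,c}\Mult_q\Pi^b_{a,c})^m]\bigr)_{m\le 2r}\neq0$.
\end{enumerate}
I would phrase condition 2 as $\Lambda^b_{i_0-1}\le a$, $\Lambda^b_{i_0}=\cdots=\Lambda^b_{i_0+r-1}\in(a,c)$ and $\Lambda^b_{i_0+r}\ge c$, for some $i_0\le j$ with $i_0+r-1\ge j+1$. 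This avoids the trace of $\Pi$ altogether, since the ordered eigenvalues already pin down the rank. Each clause is a countable union or intersection of sets defined by Borel maps, using the convention $\Lambda^b_0=0$ and \eqref{eq:spectral-extensions}. Hence the whole set is Borel.

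It remains to check that this rational description really coincides with $\mathcal C_{K,q}$. Given a splitting block of $A_b$ at $\Lambda$, discreteness of the spectrum (\cref{prop:green-operator}(ii)) gives a rational window isolating $\Lambda$, so the conditions hold. Conversely, the conditions force $\operatorname{ran}\Pi^b_{a,c}=E_{M_b}(\Lambda)$ with $\Lambda=\Lambda^b_j$. The trace identity then holds because, for an operator on the finite-rank range, $\Tr[(\Pi\Mult_q\Pi)^m]=\Tr[(C_q^\Lambda)^m]$. Finally, nonvanishing of the discriminant is equivalent to distinct eigenvalues of $C_q^\Lambda$, and hence of $B_q^\Lambda=-\gamma\Lambda C_q^\Lambda$, because $\gamma\Lambda\neq0$.

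The main obstacle I expect is this identification step, specifically justifying that $\Pi\Mult_q\Pi$ is trace class and self-adjoint so that the traces in \eqref{eq:borel-traces} are the eigenvalue power sums. This holds because the operator has finite rank and $q$ is bounded and real. I also need the block and label bookkeeping to be uniform, so that only finitely many labels ($\le K+1+r$, with $r$ ranging over a countable set) are involved in each clause.
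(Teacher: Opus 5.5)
Your proposal is correct and follows essentially the same route as the paper. You isolate each multiple block by a rational window $(a,c)$ pinned down through the ordered eigenvalue maps, identify the compression of $\Mult_q$ to $\operatorname{Ran}\Pi^b_{a,c}$ with $C_q^\Lambda(M_b)$, and detect first-order splitting by a discriminant that is a polynomial in the measurable power sums $\Tr[(\Pi^b_{a,c}\Mult_q\Pi^b_{a,c})^m]$ from \cref{prop:borel-spectrum}. The only differences are bookkeeping: you index by the colliding gap $j\le K$ and quantify over the block start $i_0$ and length $r$, whereas the paper intersects over the full-block events $\mathcal H^{\rm block}_{p,\ell}$ with $p\le K$ and $\ell>p$.
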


\begin{proof}
For $1\le p<\ell$, define the full-block event
\begin{equation*}
 \mathcal H_{p,\ell}^{\rm block}
 :=
 \Omega_*\cap
 \{\Lambda_{p-1}^b<\Lambda_p^b=\cdots=\Lambda_\ell^b
 <\Lambda_{\ell+1}^b\}.
\end{equation*}
For rational $0<a<c$, set
\begin{equation*}
 \mathcal W_{p,\ell}^{a,c}
 :=
 \mathcal H_{p,\ell}^{\rm block}\cap
 \{\Lambda_{p-1}^b<a<\Lambda_p^b=\Lambda_\ell^b
 <c<\Lambda_{\ell+1}^b\}.
\end{equation*}
By \cref{prop:borel-spectrum},
$\mathcal H_{p,\ell}^{\rm block},\mathcal W_{p,\ell}^{a,c}
\in\Borel(\Omega)$.  On $\mathcal W_{p,\ell}^{a,c}$ the projection
\[
 \Pi_{a,c}^b=\mathbf1_{(a,c)}(A_b)
\]
has rank
\[
 d:=\ell-p+1.
\]
Its compression of $\Mult_q$ is $C_q^{\Lambda_p^b}(M_b)$, while
$B_q^{\Lambda_p^b}(M_b)$ is the nonzero scalar multiple
$-\gamma\Lambda_p^b C_q^{\Lambda_p^b}(M_b)$.  Thus $q$ splits this block
exactly when the compression below has nonzero discriminant.

For all $b\in\Omega$ define
\begin{equation*}
 \Delta_{p,\ell;a,c,q}(b)
 :=
 \begin{cases}
 \Disc\!\left(
 \Pi_{a,c}^b\Mult_q\Pi_{a,c}^b
 \big|_{\operatorname{Ran}\Pi_{a,c}^b}
 \right),
 & b\in\Omega_*,\ \operatorname{rank}\Pi_{a,c}^b=d,\\[1mm]
 0,&\text{otherwise}.
 \end{cases}
\end{equation*}
The rank map is measurable because
\begin{equation*}
 \operatorname{rank}\Pi_{a,c}^b
 =\sum_{n\ge1}\mathbf1_{(a,c)}(\Lambda_n^b).
\end{equation*}
On the rank-$d$ locus, Newton's identities express the characteristic
polynomial, hence the discriminant, as a polynomial in the power sums
\begin{equation*}
 \Tr\!\left[
 (\Pi_{a,c}^b\Mult_q\Pi_{a,c}^b)^m
 \right],
 \qquad 1\le m\le d.
\end{equation*}
By \cref{prop:borel-spectrum}, these power sums are measurable.
It follows that
$\Delta_{p,\ell;a,c,q}:\Omega\to\R$ is measurable.

Define
\begin{equation*}
 \mathcal S_{p,\ell}(q)
 :=
 \bigcup_{\substack{a,c\in\Q_{>0}\\a<c}}
 \left(
 \mathcal W_{p,\ell}^{a,c}
 \cap\{\Delta_{p,\ell;a,c,q}\ne0\}
 \right).
\end{equation*}
If $b\in\mathcal H_{p,\ell}^{\rm block}$, rational endpoints $a,c$ can be
chosen strictly between the neighboring eigenvalues.  Therefore
$b\in\mathcal S_{p,\ell}(q)$ if and only if $q$ splits that full block to
first order.  Consequently
\begin{equation*}
 \mathcal C_{K,q}
 =
 \mathcal C_K\cap
 \bigcap_{p=1}^K\bigcap_{\ell=p+1}^{\infty}
 \left(
 (\mathcal H_{p,\ell}^{\rm block})^c
 \cup\mathcal S_{p,\ell}(q)
 \right).
\end{equation*}
Every set on the right belongs to $\Borel(\Omega)$, so
$\mathcal C_{K,q}\in\Borel(\Omega)$.
\end{proof}

The simultaneous splitting corollary gives the countable cover
\begin{equation}
 \mathcal C_K
 =
 \bigcup_{q\in Q\setminus\{0\}}\mathcal C_{K,q}.
 \label{eq:collision-cover}
\end{equation}
Indeed, only finitely many full multiplicity blocks meet the first $K+1$
ordered eigenvalues, and \cref{cor:simultaneous-splitting} supplies one
direction that splits all multiple blocks among them.  The reverse inclusion
is immediate.  It remains to evaluate each fixed-direction event on the
corresponding one-dimensional conditional fibers.

\begin{lemma}
\label{lem:fixed-direction-null}
For every $K\ge1$ and $q\in Q\setminus\{0\}$,
\begin{equation}
 \Prob(\mathcal C_{K,q})=0.
 \label{eq:fixed-direction-null}
\end{equation}
\end{lemma}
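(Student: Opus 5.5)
Disintegrate $\Prob$ along the single direction $f_1=q$ by \cref{prop:slices} with $N=1$. This gives $(Y,\nu,\Phi,k)$ with
\[
 \Prob(\mathcal C_{K,q})=\int_Y\int_\R\mathbf1_{\mathcal C_{K,q}}(\Phi(\eta,t))\,k(\eta,t)\,dt\,\nu(d\eta).
\]
Measurability of the integrand follows from \cref{lem:measurable-splitting} and measurability of $\Phi$. It therefore suffices to show that, for $\nu$-almost every $\eta$, the fiber set
\[
 F_\eta:=\{t\in\R:\Phi(\eta,t)\in\mathcal C_{K,q}\}
\]
is Lebesgue-null. We take it to be countable, which is stronger. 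Since $K(\eta,dt)=k(\eta,t)\,dt$, this gives the claim.

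Fix $\eta$ in the full-measure set of \cref{cor:coherent-slices}, and fix $s\in\mathcal G_\eta$. Let $\widetilde M_{\eta,t}=e^{\gamma(t-s)q}M_{\Phi(\eta,s)}$ be the coherent family. It lies in $\mathcal M_*$ for all $t\in\R$ and agrees with $M_{\Phi(\eta,t)}$ for $t\in\mathcal G_\eta$. Take $t_0\in F_\eta$. Then $t_0\in\mathcal G_\eta$, so $M_{\Phi(\eta,t_0)}=\widetilde M_{\eta,t_0}=:\mu_0\in\mathcal M_*$. By definition of $\mathcal C_{K,q}$, $q$ splits to first order every full multiplicity block of $A_{\mu_0}$ containing one of the gaps $j,j+1$ with $j\le K$. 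Along the fiber,
\[
 \widetilde M_{\eta,t_0+\tau}=e^{\gamma\tau q}\mu_0,
\]
which is exactly the one-parameter tilt $\mu_\tau$ of \cref{lem:local-isolation}. That lemma gives $\delta>0$ such that $\Lambda_j^{\mu_\tau}<\Lambda_{j+1}^{\mu_\tau}$ for all $1\le j\le K$ and $0<|\tau|<\delta$.

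For $t\in\mathcal G_\eta$ we have $\Lambda_j^{\Phi(\eta,t)}=\Lambda_j^{\widetilde M_{\eta,t}}$. Hence no point of $\mathcal C_K$, and in particular no point of $F_\eta$, lies in $(t_0-\delta,t_0)\cup(t_0,t_0+\delta)$. So every point of $F_\eta$ is isolated in $F_\eta$. A subset of $\R$ all of whose points are isolated is countable, since one can choose disjoint rational intervals around its points. Thus $F_\eta$ is countable and Lebesgue-null. Integrating over $\eta$ gives \eqref{eq:fixed-direction-null}.

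The main delicate point is the identification in the middle step. The splitting hypothesis is phrased for the canonical measure $M_{\Phi(\eta,t_0)}$, while \cref{lem:local-isolation} is applied to the deterministic tilt family, which is defined for all $t$ even off $\mathcal G_\eta$. The coherence identity \eqref{eq:coherent-agreement} bridges the two. It is precisely why we need to restrict to points in $\mathcal G_\eta$. The tilted measures off $\mathcal G_\eta$ never enter $\mathcal C_{K,q}$, because that set is contained in $\Omega_*$.
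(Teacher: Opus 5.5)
Your proposal is correct and follows the paper's proof essentially step for step. You disintegrate along $q$, use coherence at points of $\mathcal G_\eta$ to transfer the splitting hypothesis to the tilt family $e^{\gamma\tau q}M_{\Phi(\eta,t_0)}$, and invoke \cref{lem:local-isolation} to show that every point of the fiber set is isolated, hence the set is countable and null for the absolutely continuous kernel $K(\eta,dt)$.
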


\begin{proof}
Use the one-dimensional conditional decomposition from
\cref{prop:slices} for the direction $q$, and write its data as
$Y_q,\nu_q,\Phi_q,K_q$, with
\begin{equation*}
 K_q(\eta,dt)=k_q(\eta,t)\,dt.
\end{equation*}
For $\nu_q$-almost every $\eta$, let $\mathcal G_\eta$ and
$(\widetilde M_{\eta,t})_{t\in\R}$ be the structural coherent family from
\cref{cor:coherent-slices}.  Define
\begin{equation*}
 Z_\eta:=\{t\in\R:\Phi_q(\eta,t)\in\mathcal C_{K,q}\}.
\end{equation*}
By \cref{lem:measurable-splitting}, $Z_\eta\in\Borel(\R)$.  Moreover,
$\mathcal C_{K,q}\subset\Omega_*$, so $Z_\eta\subset\mathcal G_\eta$
and the canonical and coherent measures agree at every point of $Z_\eta$.

Fix $t_0\in Z_\eta$.  Coherence gives
\begin{equation*}
 d\widetilde M_{\eta,t_0+s}
 =e^{\gamma sq}\,d\widetilde M_{\eta,t_0}.
\end{equation*}
At $\widetilde M_{\eta,t_0}$ the direction $q$ splits every multiple block
responsible for one of the first $K$ collisions.  Hence
\cref{lem:local-isolation} gives $\delta(t_0)>0$ such that
\[
 0<|s|<\delta(t_0)
 \quad\Longrightarrow\quad
 \Lambda_j^{\widetilde M_{\eta,t_0+s}}
 <
 \Lambda_{j+1}^{\widetilde M_{\eta,t_0+s}}
 \quad(1\le j\le K).
\]
If $t_0+s\in Z_\eta$, the coherent and canonical measures agree there,
so the preceding strict inequalities exclude every nonzero
$|s|<\delta(t_0)$.  Therefore
\begin{equation*}
 (t_0-\delta(t_0),t_0+\delta(t_0))\cap Z_\eta=\{t_0\}.
\end{equation*}
Every point of $Z_\eta$ is isolated.  Since $\R$ is second countable,
$Z_\eta$ is countable, and hence
\begin{equation*}
 K_q(\eta,Z_\eta)=0
\end{equation*}
for almost every $\eta$.  Therefore
\begin{equation*}
 \Prob(\mathcal C_{K,q})
 =\int_{Y_q}K_q(\eta,Z_\eta)\,\nu_q(d\eta)=0,
\end{equation*}
which proves \eqref{eq:fixed-direction-null}.
\end{proof}

\subsection{Proof of the simplicity assertion}

Fix $K\ge1$.  By \eqref{eq:collision-cover}, countability of $Q$, and
\cref{lem:fixed-direction-null},
\[
 \Prob(\mathcal C_K)
 \le
 \sum_{q\in Q\setminus\{0\}}\Prob(\mathcal C_{K,q})=0.
\]
Nonsimplicity on $\Omega_*$ is the event
\[
 \bigcup_{K\ge1}\mathcal C_K.
\]
Since $\Prob(\Omega_*)=1$,
\[
 0<\Lambda_1^b<\Lambda_2^b<\cdots
 \qquad\text{almost surely}.
\]
The divergence $\Lambda_n^b\uparrow\infty$ and compact resolvent follow from
\cref{prop:green-operator}.

\section{Joint eigenvalue densities by response submersion}

\subsection{Measurable submersion events and fiberwise nullity}

Fix $N\ge1$ and $1\le n_1<\cdots<n_N$.  On the almost-sure simplicity
event, \cref{prop:transversality} makes the selected response functionals
linearly independent.  We choose finitely many coefficient directions with a
nonzero response determinant and apply the inverse function theorem on the
corresponding conditional slices.

Using the all-space measurable extensions from
\cref{prop:borel-spectrum}, define
\begin{equation*}
 \boldsymbol\Lambda(b)
 :=
 (\Lambda_{n_1}^b,\ldots,\Lambda_{n_N}^b),
 \qquad b\in\Omega.
\end{equation*}
To pass from functional independence to a finite-dimensional coefficient
slice, we need finitely many directions.

\begin{lemma}
\label{lem:finite-witness}
Let $\ell_1,\ldots,\ell_N:\mathcal T\to\R$ be real-linear
functionals, and assume that their restrictions to $Q$ are linearly
independent over $\R$.  There exist directions $q_1,\ldots,q_N\in Q$
satisfying
\begin{equation}
 \det(\ell_i(q_j))_{i,j=1}^N\ne0.
 \label{eq:finite-witness}
\end{equation}
At the same time, the chosen directions are linearly independent over $\R$
in $\mathcal T$.
\end{lemma}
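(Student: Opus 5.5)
The proof is finite-dimensional linear algebra; it will not use the spectral setting.

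Consider the evaluation map
\[
 \mathrm{ev}:Q\longrightarrow\R^N,\qquad
 \mathrm{ev}(q):=(\ell_1(q),\ldots,\ell_N(q)).
\]
We will show that the real span of $\mathrm{ev}(Q)$ is all of $\R^N$. If it were a proper subspace, there would be a nonzero $a\in\R^N$ orthogonal to it. Then $\sum_ia_i\ell_i(q)=0$ for every $q\in Q$, which contradicts the assumed linear independence of the restrictions $\ell_i|_Q$ over $\R$.

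Since $\mathrm{ev}(Q)$ spans $\R^N$, we can extract $q_1,\ldots,q_N\in Q$ whose images $\mathrm{ev}(q_1),\ldots,\mathrm{ev}(q_N)$ form a basis. The matrix $(\ell_i(q_j))_{i,j}$ has these images as its columns, so it is invertible and \eqref{eq:finite-witness} holds.

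Linear independence of the chosen directions follows from the determinant condition. Suppose $\sum_jc_jq_j=0$ in $\mathcal T$ with $c\in\R^N$. Each $\ell_i$ is real-linear on $\mathcal T$, so
\[
 \sum_j\ell_i(q_j)c_j=\ell_i\Bigl(\sum_jc_jq_j\Bigr)=0
 \qquad\text{for every } i.
\]
Invertibility of $(\ell_i(q_j))$ then forces $c=0$.

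The only point needing care is the order of the argument. Independence of the $\ell_i$ is assumed only on the countable $\Q$-span $Q$, while the coefficients $a_i$ are real. The contradiction therefore has to go through the $\R$-span of the image vectors $\mathrm{ev}(Q)\subset\R^N$, and only afterwards do we select rational vectors $q_j$ from $Q$ itself. Selecting from $Q$ is always possible because a spanning set of $\R^N$ contains a basis.
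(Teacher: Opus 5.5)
Your proposal is correct and follows the paper's proof essentially line by line. The paper also shows, via the same orthogonal-vector contradiction, that the real span of the image of $Q$ under $f\mapsto(\ell_1(f),\ldots,\ell_N(f))$ is all of $\R^N$; it then extracts a basis from that image and deduces real-linear independence of $q_1,\ldots,q_N$ from invertibility of $(\ell_i(q_j))$.
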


\begin{proof}
Define
\[
 T:\mathcal T\longrightarrow\R^N,\qquad
 T(f):=(\ell_1(f),\ldots,\ell_N(f)).
\]
If $\Span_\R T(Q)$ were a proper subspace of $\R^N$, there would exist
$a=(a_1,\ldots,a_N)\ne0$ orthogonal to it.  Then
\[
 \sum_{i=1}^Na_i\ell_i(q)=0
 \qquad(q\in Q),
\]
contradicting the assumed independence of the restrictions.  Hence $T(Q)$
spans $\R^N$, and one can choose $q_1,\ldots,q_N\in Q$ such that
$T(q_1),\ldots,T(q_N)$ form a basis.  This is
\eqref{eq:finite-witness}.  If $\sum_jc_jq_j=0$ in $\mathcal T$ with
$c_j\in\R$, real-linearity gives $\sum_jc_jT(q_j)=0$; since these vectors
form a basis, every $c_j$ vanishes.  Thus the chosen directions are also
real-linearly independent in $\mathcal T$.
\end{proof}

Let $\mathcal Q_N$ denote the countable set of $N$-tuples
\[
 \mathbf q=(q_1,\ldots,q_N)\in Q^N
\]
that are linearly independent over $\R$ in $\mathcal T$.
For such a tuple define
\begin{equation*}
 \mathcal S_{\mathbf q}
 :=
 \left\{
 b\in\Omega_*:
 \begin{array}{l}
 \Lambda_{n_1}^b,\ldots,\Lambda_{n_N}^b
 \text{ have multiplicity one},\\[1mm]
 \det\bigl(\ell_{n_i}^{M_b}(q_j)\bigr)_{i,j=1}^N\ne0
 \end{array}
 \right\},
\end{equation*}
where
\[
 \ell_n^\mu(q)
 =-\gamma\Lambda_n^\mu\int_Dq(\phi_n^\mu)^2\,d\mu.
\]
For the subsequent disintegration, the corresponding witness event must be
measurable in the environment.  Rational isolating windows provide the needed
scalar encoding.

\begin{lemma}
\label{lem:measurable-submersion}
For every $\mathbf q\in\mathcal Q_N$,
\begin{equation*}
 \mathcal S_{\mathbf q}\in\Borel(\Omega).
\end{equation*}
\end{lemma}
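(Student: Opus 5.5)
The plan is to copy the strategy of \cref{lem:measurable-splitting}: encode each condition defining $\mathcal S_{\mathbf q}$ through rational spectral windows and the measurable trace functionals of \cref{prop:borel-spectrum}, and then take a countable union.

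\emph{Simplicity.} For a fixed label $n$, multiplicity one of $\Lambda_n^b$ on $\Omega_*$ is the event
\[
 \Omega_*\cap\{\Lambda_{n-1}^b<\Lambda_n^b<\Lambda_{n+1}^b\}.
\]
With $\Lambda_0^b=0$ this also covers $n=1$. By \cref{prop:borel-spectrum} the eigenvalue maps are measurable, so this event is Borel. The intersection over $n\in\{n_1,\ldots,n_N\}$ is therefore Borel as well.

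\emph{Rational windows.} Fix rationals $0<a_i<c_i$, $1\le i\le N$, and consider the event $\mathcal W$ on which $\Lambda_{n_i-1}^b<a_i<\Lambda_{n_i}^b<c_i<\Lambda_{n_i+1}^b$ for every $i$. This event is Borel, and on it $\Pi_i^b:=\Pi_{a_i,c_i}^b$ is the rank-one projection onto $\Span\{\phi_{n_i}^{M_b}\}$. Consequently
\[
 \int_D q_j(\phi_{n_i}^{M_b})^2\,dM_b
 =\Tr\bigl(\Pi_i^b\Mult_{q_j}\Pi_i^b\bigr),
\]
which is the case $m=1$ of \eqref{eq:borel-traces}. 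This quantity does not depend on the sign of the eigenfunction. It follows that on $\mathcal W$ each matrix entry $\ell_{n_i}^{M_b}(q_j)=-\gamma\Lambda_{n_i}^b\Tr(\Pi_i^b\Mult_{q_j}\Pi_i^b)$ is a product of measurable functions.

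\emph{Determinant and union.} Define $D_{a,c}(b)$ to be the determinant of this matrix on $\mathcal W$ and $0$ off $\mathcal W$. Then $D_{a,c}$ is a polynomial in measurable maps and hence measurable. Whenever all $N$ selected eigenvalues are simple, isolating rational windows exist. So $\mathcal S_{\mathbf q}$ equals the union over all rational choices $(a_i,c_i)_{i}$ of $\mathcal W\cap\{D_{a,c}\ne0\}$. This is a countable union of Borel sets, which gives $\mathcal S_{\mathbf q}\in\Borel(\Omega)$.

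\emph{Main obstacle.} The only delicate point is the identification of the rank-one compression trace with $\int q(\phi_n)^2\,d\mu$. It has to hold for the measurable projection field produced in \cref{prop:borel-spectrum}, which is defined through $\widehat G_b$, and not only for an abstract spectral projection. This follows from the identity $\Pi_{a,c}^b=\mathbf 1_{(c^{-1},a^{-1})}(\widehat G_b)$ on $\Omega_*$ together with $G_{M_b}=A_b^{-1}$ from \cref{prop:green-operator}. With that identity, the trace computation can be checked directly in any orthonormal eigenbasis.
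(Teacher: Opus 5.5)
Your proposal is correct and follows essentially the same route as the paper: rational isolating windows $\mathcal W_{\mathbf a,\mathbf c}$, the rank-one identity $\Tr(\Pi_i^b\Mult_{q_j}\Pi_i^b)=\int_D q_j(\phi_{n_i}^b)^2\,dM_b$ (the $m=1$ case of \eqref{eq:borel-traces}), a determinant extended by zero off the window, and a countable union. Your separate simplicity step is redundant but harmless, since the strict inequalities defining each window event already force multiplicity one.
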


\begin{proof}
Let
\[
 \mathbf a=(a_1,\ldots,a_N),\qquad
 \mathbf c=(c_1,\ldots,c_N)
\]
have rational coordinates with $0<a_i<c_i$, and define the rational-window
event
\begin{equation*}
 \mathcal W_{\mathbf a,\mathbf c}
 :=
 \Omega_*\cap
 \bigcap_{i=1}^N
 \{\Lambda_{n_i-1}^b<a_i<\Lambda_{n_i}^b
 <c_i<\Lambda_{n_i+1}^b\}.
\end{equation*}
The eigenvalue maps in \cref{prop:borel-spectrum} give
$\mathcal W_{\mathbf a,\mathbf c}\in\Borel(\Omega)$.  On this event
\[
 \Pi_i^b:=\mathbf1_{(a_i,c_i)}(A_b)
\]
has rank one.  If $\phi_{n_i}^b$ is any normalized real generator of its
range, then
\begin{equation*}
 \Tr(\Pi_i^b\Mult_{q_j}\Pi_i^b)
 =
 \int_Dq_j(\phi_{n_i}^b)^2\,dM_b.
\end{equation*}
Define on all of $\Omega$
\begin{equation*}
 J_{\mathbf a,\mathbf c,\mathbf q}(b)
 :=
 \begin{cases}
 \det\left(
 -\gamma\Lambda_{n_i}^b
 \Tr(\Pi_i^b\Mult_{q_j}\Pi_i^b)
 \right)_{i,j=1}^N,
 &b\in\mathcal W_{\mathbf a,\mathbf c},\\[1mm]
 0,&b\notin\mathcal W_{\mathbf a,\mathbf c}.
 \end{cases}
\end{equation*}
By \cref{prop:borel-spectrum},
$J_{\mathbf a,\mathbf c,\mathbf q}:\Omega\to\R$ is measurable.
Conversely, every selected eigenvalue of multiplicity one admits rational endpoints
strictly between its neighboring eigenvalues.  Therefore
\begin{equation*}
 \mathcal S_{\mathbf q}
 =
 \bigcup_{\substack{\mathbf a,\mathbf c\in\Q^N\\0<a_i<c_i}}
 \left(
 \mathcal W_{\mathbf a,\mathbf c}
 \cap\{J_{\mathbf a,\mathbf c,\mathbf q}\ne0\}
 \right).
\end{equation*}
The right-hand side is a countable union of sets in $\Borel(\Omega)$, and
therefore $\mathcal S_{\mathbf q}\in\Borel(\Omega)$.
\end{proof}

By the almost-sure simplicity result, \cref{prop:transversality} and
\cref{lem:finite-witness},
\begin{equation}
 \Prob\left(
 \bigcup_{\mathbf q\in\mathcal Q_N}\mathcal S_{\mathbf q}
 \right)=1.
 \label{eq:submersion-cover}
\end{equation}
We next pass to the conditional slice associated with a fixed witness tuple.

\begin{lemma}
\label{lem:fiber-null}
Let $E\in\Borel(\R^N)$ satisfy $\mathcal L^N(E)=0$.  Then, for every
$\mathbf q\in\mathcal Q_N$,
\begin{equation}
 \Prob\bigl(
 \mathcal S_{\mathbf q}
 \cap\{\boldsymbol\Lambda\in E\}
 \bigr)=0.
 \label{eq:fixed-tuple-null}
\end{equation}
\end{lemma}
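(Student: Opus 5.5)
We disintegrate $\Prob$ along the $N$-dimensional coefficient slice spanned by the witness directions $q_1,\ldots,q_N$. On each fiber the map $t\mapsto\boldsymbol\Lambda$ is a local real-analytic diffeomorphism at every point of $\mathcal S_{\mathbf q}$, so preimages of Lebesgue-null sets are Lebesgue-null.

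First, since $\mathbf q\in\mathcal Q_N$ is linearly independent, \cref{prop:slices} applies with $f_j=q_j$. It gives data $Y,\nu,\Phi,K(\eta,dt)=k(\eta,t)\,dt$. Applying \eqref{eq:disintegration} to $H=\mathbf 1_{\mathcal S_{\mathbf q}\cap\{\boldsymbol\Lambda\in E\}}$, which is measurable by \cref{lem:measurable-submersion,prop:borel-spectrum}, we get
\[
 \Prob(\mathcal S_{\mathbf q}\cap\{\boldsymbol\Lambda\in E\})
 =\int_Y K(\eta,Z_\eta)\,\nu(d\eta),
 \qquad
 Z_\eta:=\{t:\Phi(\eta,t)\in\mathcal S_{\mathbf q},\ \boldsymbol\Lambda(\Phi(\eta,t))\in E\}.
\]
Since $K(\eta,\cdot)\ll\mathcal L^N$, it suffices to show $\mathcal L^N(Z_\eta)=0$ for $\nu$-almost every $\eta$. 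Fix $\eta$ as in \cref{cor:coherent-slices}, together with the coherent family $\widetilde M_{\eta,t}$. Because $\mathcal S_{\mathbf q}\subset\Omega_*$, on $Z_\eta\subset\mathcal G_\eta$ we have $\widetilde M_{\eta,t}=M_{\Phi(\eta,t)}$. Hence $\boldsymbol\Lambda(\Phi(\eta,t))=F(t):=(\Lambda_{n_i}^{\widetilde M_{\eta,t}})_{i}$ for $t\in Z_\eta$.

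Next, fix $t_0\in Z_\eta$. The measure $\widetilde M_{\eta,t_0+s}=e^{\gamma\sum_j s_jq_j}\widetilde M_{\eta,t_0}$ is a finite-dimensional tilt of an element of $\mathcal M_*$. Each $\Lambda_{n_i}$ is simple at $t_0$, so \cref{cor:simple-response} shows that $F$ is real-analytic near $t_0$ with $\partial_{t_j}F_i(t_0)=\ell_{n_i}^{\mu}(q_j)$, where $\mu=\widetilde M_{\eta,t_0}=M_{\Phi(\eta,t_0)}$. By the definition of $\mathcal S_{\mathbf q}$, the Jacobian $\det(\ell_{n_i}^{\mu}(q_j))$ is nonzero. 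The inverse function theorem then gives an open neighborhood $U_{t_0}$ on which $F$ is a $C^1$ diffeomorphism onto an open set. We may shrink $U_{t_0}$ to a rational ball containing $t_0$. Countably many such balls cover $Z_\eta$ (Lindelöf), and on each ball $U$ we have $Z_\eta\cap U\subset (F|_U)^{-1}(E)$. A $C^1$ diffeomorphism has a $C^1$ inverse, which maps the null set $E\cap F(U)$ to a null set. Hence $\mathcal L^N(Z_\eta\cap U)=0$, and therefore $\mathcal L^N(Z_\eta)=0$.

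The main obstacle is bookkeeping rather than analysis. We must ensure that the fiberwise map $F$ is defined on a full open neighborhood, not just on $Z_\eta$, so that the inverse function theorem applies. We must also ensure that simplicity and the nonzero determinant transfer from $\Phi(\eta,t_0)$ to the coherent family. Both follow because the coherent family is defined for all $t$ and lies in $\mathcal M_*$, and because it agrees with the canonical measure on $\mathcal G_\eta$. One also needs the ordered-label function $t\mapsto\Lambda_{n_i}^{\widetilde M_{\eta,t}}$ to coincide with the analytic branch near $t_0$. This follows from the isolating-gap continuity in \cref{prop:response}(i), exactly as in \cref{cor:simple-response}.
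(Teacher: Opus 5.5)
Your proposal is correct and follows essentially the same route as the paper. You disintegrate along the $N$-dimensional slice for $\mathbf q$, pass to the coherent family on $Z_\eta\subset\mathcal G_\eta$, use \cref{cor:simple-response} together with the nonvanishing determinant built into $\mathcal S_{\mathbf q}$ to apply the inverse function theorem, and conclude $\mathcal L^N(Z_\eta)=0$ from a countable cover. The only difference is cosmetic: you use that a $C^1$ inverse maps null sets to null sets, whereas the paper shrinks to a relatively compact neighborhood and applies the change-of-variables formula with a bounded Jacobian.
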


\begin{proof}
By \cref{lem:measurable-submersion,prop:borel-spectrum}, the event in
\eqref{eq:fixed-tuple-null} belongs to $\Borel(\Omega)$.  Fix
$\mathbf q=(q_1,\ldots,q_N)$ and use the corresponding $N$-dimensional
disintegration from \cref{prop:slices}.  Write its data as
$Y_{\mathbf q},\nu_{\mathbf q},\Phi_{\mathbf q},K_{\mathbf q}$, with
\begin{equation*}
 K_{\mathbf q}(\eta,dt)=k_{\mathbf q}(\eta,t)\,dt.
\end{equation*}
For $\nu_{\mathbf q}$-almost every $\eta$, let
$(\widetilde M_{\eta,t})_{t\in\R^N}$ and $\mathcal G_\eta$ be the coherent
structural family from \cref{cor:coherent-slices}.  Define
\begin{equation}
 \boldsymbol\Lambda_\eta(t)
 :=
 \left(
 \Lambda_{n_1}^{\widetilde M_{\eta,t}},
 \ldots,
 \Lambda_{n_N}^{\widetilde M_{\eta,t}}
 \right).
 \label{eq:fiber-eigen-map}
\end{equation}
By \cref{prop:response}(i), $\boldsymbol\Lambda_\eta$ is continuous on
$\R^N$.  On the
locus where the selected eigenvalues have multiplicity one,
\cref{cor:simple-response} gives local real analyticity.  Put
\begin{equation*}
 Z_\eta
 :=\left\{t\in\R^N:
 \begin{array}{l}
 \Phi_{\mathbf q}(\eta,t)\in\mathcal S_{\mathbf q},\\
 \boldsymbol\Lambda(\Phi_{\mathbf q}(\eta,t))\in E
 \end{array}
 \right\}.
\end{equation*}
Since $\Phi_{\mathbf q}(\eta,\cdot)$ and $\boldsymbol\Lambda$ are
measurable and $\mathcal S_{\mathbf q}\in\Borel(\Omega)$,
$Z_\eta\in\Borel(\R^N)$.  Moreover,
$\mathcal S_{\mathbf q}\subset\Omega_*$, so
$Z_\eta\subset\mathcal G_\eta$.

Fix $t_0\in Z_\eta$.  Then
\[
 \widetilde M_{\eta,t_0}
 =M_{\Phi_{\mathbf q}(\eta,t_0)}.
\]
Choose normalized real eigenfunctions
$\phi_{n_i}^{\eta,t_0}$ of $A_{\widetilde M_{\eta,t_0}}$ for the selected
eigenvalues of multiplicity one.  By \cref{cor:simple-response}, the Jacobian of
\eqref{eq:fiber-eigen-map} is
\begin{equation*}
 \bigl[D\boldsymbol\Lambda_\eta(t_0)\bigr]_{ij}
 =
 -\gamma\Lambda_{n_i}^{\widetilde M_{\eta,t_0}}
 \int_Dq_j(\phi_{n_i}^{\eta,t_0})^2
 \,d\widetilde M_{\eta,t_0}.
\end{equation*}
By the definition of $\mathcal S_{\mathbf q}$,
\begin{equation*}
 \det D\boldsymbol\Lambda_\eta(t_0)\ne0.
\end{equation*}
The inverse function theorem gives an open neighborhood on which
$\boldsymbol\Lambda_\eta$ is a $C^1$ diffeomorphism.  Choose an open
$U_{t_0}$ whose closure is compactly contained in that neighborhood.  Then
\begin{equation*}
 \sup_{y\in\boldsymbol\Lambda_\eta(U_{t_0})}
 \left|
 \det D\!\left[(\boldsymbol\Lambda_\eta|_{U_{t_0}})^{-1}\right](y)
 \right|<\infty.
\end{equation*}
The change-of-variables theorem therefore gives
\begin{align}
 \mathcal L^N\bigl(
 U_{t_0}\cap\boldsymbol\Lambda_\eta^{-1}(E)
 \bigr)
 &=\int_{E\cap\boldsymbol\Lambda_\eta(U_{t_0})}
 \left|
 \det D\!\left[(\boldsymbol\Lambda_\eta|_{U_{t_0}})^{-1}\right](y)
 \right|\,dy\notag\\
 &=0.
 \label{eq:local-null-preimage}
\end{align}

The neighborhoods $U_{t_0}$, with $t_0$ running over $Z_\eta$, cover
$Z_\eta$.  Since $\R^N$ is second countable,
a countable subfamily
still covers $Z_\eta$.  Summing \eqref{eq:local-null-preimage} over this
countable subcover gives
\begin{equation*}
 \mathcal L^N(Z_\eta)=0.
\end{equation*}
Because
$K_{\mathbf q}(\eta,\cdot)\ll\mathcal L^N$,
\begin{equation*}
 K_{\mathbf q}(\eta,Z_\eta)=0.
\end{equation*}
Therefore
\begin{equation*}
 \Prob\bigl(
 \mathcal S_{\mathbf q}\cap\{\boldsymbol\Lambda\in E\}
 \bigr)
 =\int_{Y_{\mathbf q}}
 K_{\mathbf q}(\eta,Z_\eta)\,
 \nu_{\mathbf q}(d\eta)=0,
\end{equation*}
which proves \eqref{eq:fixed-tuple-null}.
\end{proof}

\subsection{Proof of the joint-density assertion}

Let $E\in\Borel(\R^N)$ with $\mathcal L^N(E)=0$.  By
\eqref{eq:submersion-cover} and \cref{lem:fiber-null},
\begin{align*}
 \Prob\{\boldsymbol\Lambda\in E\}
 &\le
 \Prob\left(
 \left(\bigcup_{\mathbf q\in\mathcal Q_N}
 \mathcal S_{\mathbf q}\right)^c
 \right)\\
 &\quad+
 \sum_{\mathbf q\in\mathcal Q_N}
 \Prob\bigl(
 \mathcal S_{\mathbf q}
 \cap\{\boldsymbol\Lambda\in E\}
 \bigr)
 =0.
\end{align*}
Thus
\[
 (\Lambda_{n_1}^b,\ldots,\Lambda_{n_N}^b)_\#\Prob
 \ll\mathcal L^N.
\]
This proves the joint-density assertion and completes the proof of
\cref{thm:main}.

\end{document}